\renewenvironment{itemize}{\begin{list}{$\bullet$}{\leftmargin=0.5cm}\parindent=0pt}{\end{list}}
\theoremstyle{plain}
\newtheorem{theorem}{Theorem}[section]
\newtheorem{thm}[theorem]{Theorem}
\newtheorem{proposition}[theorem]{Proposition}
\newtheorem{lemma}[theorem]{Lemma}   
\newtheorem{corollary}[theorem]{Corollary}
\theoremstyle{definition}
\newtheorem{defin}[theorem]{Definition}
\newtheorem{remark}[theorem]{Remark}
\begin{document}

\title{Universal families of extensions of coherent systems}

\author{Matteo Tommasini}

\address{\flushright
Riemann Center - Institute of Algebraic Geometry\newline
Leibniz Universit\"{a}t Hannover\newline
Welfengarten 1 - 30167 Hannover, Germany}

\email{matteo.tommasini2@gmail.com}

\date{December 1, 2012}

\subjclass[2010]{14D20, 14D06, 14H10, 14H60}

\keywords{Universal families, extensions, coherent systems, cohomology and base change for families of coherent systems}

\thanks{This is part of the work that I did for my Ph.D.; I would like to acknowledge my advisor
Professor Peter Newstead for the opportunity to work with him, for suggesting the problem of the thesis and for his
guidance and teaching during the last year and a half. Moreover, I would also like to thank SISSA, that awarded me
with a Ph.D. fellowship for the past 3 years; I would also like to thank both the Newton Institute in Cambridge and
the University of Liverpool for their hospitality in April-June 2011 and January-March 2012 respectively.
I was also financially supported by an Erasmus fellowship for 3 months, by the Italian Indam-GNSAGA group
and by the Organizing Committee of the VBAC Conference 2012 in Barcelona. I would also like to thank the Riemann Center
and the Institute of Algebraic Geometry of Leibniz Universit\"{a}t Hannover, where I completed this paper during my
stay as a Riemann fellow.}

\begin{abstract}
We prove a result of cohomology and base change for families of coherent systems over a curve. We use that in order to
prove the existence of (non-split, non-degenerate) universal families of extensions for families of coherent systems (in
the spirit of the paper ``Universal families of extensions'' by H. Lange). Such results will be applied in subsequent
papers in order to describe the wallcrossing for some moduli spaces of coherent systems.
\end{abstract}

\maketitle

\section{Introduction}
In the last 2 decades coherent systems on algebraic curves have been widely studied in algebraic geometry, mainly
because they are a very powerful tool in order to understand Brill-Noether theory for vector bundles. In its turn,
Brill-Noether theory has an important role to play in understanding the geometric structure of the moduli space of
curves.\\

Let $C$ be any complex smooth irreducible projective curve. Then a coherent system on $C$ (see \cite{KN}) is a pair
$(E,V)$ where $E$ is a vector bundle on $C$ and $V$ is a linear subspace of the space of global sections of $E$. To any
such object one can associate a triple $(n,d,k)$ where $n$ and $d$ are the rank and the degree of $E$ respectively and
$k$ is the dimension of $V$. In order to construct a space which parametrizes coherent systems on an algebraic curve
(see \cite{KN}), one has to fix the invariants $(n,d,k)$ and also a stability parameter $\alpha$ in $\mathbb{R}$ (a
posteriori $\alpha\in\mathbb{R}_{\geq 0}$). Then one defines the $\alpha$-slope of any $(E,V)$ of type $(n,d,k)$ as

$$\mu_{\alpha}(E,V)=\mu_{\alpha}(n,d,k):=\frac{d}{n}+\alpha\,\frac{k}{n}.$$

Then there is an obvious notion of $\alpha$-(semi)stability that coincides with the usual notion of (semi)stability
for vector bundles on $C$ whenever either $V$ or $\alpha$ are zero. Having fixed these notions, one can give a natural
structure of projective (respectively quasi-projective) scheme to the sets $\widetilde{G}(\alpha;n,d,k)$ and
$G(\alpha;n,d,k)$ which parametrize $\alpha$-semistable (respectively $\alpha$-stable) coherent systems of type
$(n,d,k)$.\\

It is known (see \cite{BGMN}) that there exist only finitely many critical values where the stability condition
changes. Therefore, for every triple $(n,d,k)$ there are only finitely many distinct moduli spaces of stable coherent
systems, parametrized by open intervals of $\mathbb{R}_{\geq 0}$. A crucial issue in comparing the moduli spaces on the
left and on the right of any critical value $\alpha_c$ is that of giving a geometric description of the flip loci, i.e.
the sets of points that are added or removed by crossing $\alpha_c$. The basic description is given by
\cite[lemma 6.3]{BGMN}. Among other things, this lemma implies that any $(E,V)$ that belongs to a flip locus at a
$\alpha_c$ appears as the middle term of a non-split extension

\begin{equation}\label{1}
0\longrightarrow(E_1,V_1)\longrightarrow(E,V)\longrightarrow(E_2,V_2)\longrightarrow 0
\end{equation}
in which $(E_1,V_1)$ and $(E_2,V_2)$ are both $\alpha_c$-semistable with
  
$$\mu_{\alpha_c}(E_1,V_1)=\mu_{\alpha_c}(E,V)=\mu_{\alpha_c}(E_2,V_2).$$

The classes of extensions like (\ref{1}) are parametrized by a complex vector space

$$\mathbb{H}^1_{21}:=\operatorname{Ext}^1((E_2,V_2),(E_1,V_1)).$$

If $\textrm{Aut}(E_l,V_l)=\mathbb{C}^*$ for $l=1,2$ (this happens for example if both the $(E_l,V_l)$'s are
$\alpha_c$-stable), then the $(E,V)$'s in the middle of (\ref{1}) will be parametrized by
$\mathbb{P}(\mathbb{H}^1_{21})$. Then the basic idea in order to describe a flip locus for $(n,d,k)$ at $\alpha_c$
should simply be that of considering all invariants $(n_1,d_1,k_1),(n_2,d_2,k_2)$ such that

$$n=n_1+n_2,\quad k=k_1+k_2,\quad \mu_{\alpha_c}(n_1,d_1,k_1)=\mu_{\alpha_c}(n,d,k)=\mu_{\alpha_c}(n_2,d_2,k_2),$$
$$G_1:=\widetilde{G}(\alpha_c;n_1,d_1,k_1)\neq\varnothing\neq\widetilde{G}(\alpha_c;n_2,d_2,k_2)=:G_2$$
(the first line automatically implies that $d=d_1+d_2$). Having fixed any such data, one would like to describe a
scheme parametrizing all classes of extensions as before, letting vary the coherent systems $(E_l,V_l)\in
G_l$ for $l=1,2$. In the best possible situation the resulting scheme will consist exactly of the objects we are
interested in; otherwise one will have to remove a subscheme from it (this will be part of further papers on this
subject). So we would like to describe a fibration over $G_1\times G_2$ such that the fiber over each point
$((E_1,V_1),(E_2,V_2))$ is canonically isomorphic to $\mathbb{H}^1_{21}$ or, even better, to
$\mathbb{P}(\mathbb{H}^1_{21})$. If we denote by $g$ the genus of $C$, by using \cite[proposition 3.2]{BGMN} we get
that 

$$\textrm{dim }\mathbb{H}^1_{21}=C_{21}+\textrm{dim }\mathbb{H}^0_{21}+\textrm{dim }\mathbb{H}^2_{21},$$
where $C_{21}$ is a constant that depends only on the genus of $C$ and on the types of $(E_1,V_1)$ and $(E_2,V_2)$, 

$$\mathbb{H}^0_{21}:=\operatorname{Hom}((E_2,V_2),(E_1,V_1))\quad\textrm{and}
\quad\mathbb{H}^2_{21}:=\operatorname{Ext}^2((E_2,V_2),(E_1,V_1)).$$

Therefore in general one cannot hope to get a fibration on the whole $G_1\times G_2$, but only on each subscheme of it
where the sum of the dimension of $\mathbb{H}^0_{21}$ and $\mathbb{H}^2_{21}$ is constant (actually, it will turn out
that they need to be constant separately).\\

A slightly more complicated case arises when $\textrm{Aut}(E_1,V_1)=\mathbb{C}^*$ and $\textrm{Aut}(E_2,V_2)=
GL(t,\mathbb{C})$ for some $t\geq 2$ (or conversely); this happens when $(E_1,V_1)$ is $\alpha_c$-stable and
$(E_2,V_2)\simeq(Q,W)^{\oplus_t}$ where $(Q,W)$ is $\alpha_c$-stable. In this case we will have to take into account
an action of $GL(t,\mathbb{C})$ on $\mathbb{H}^1_{21}$, so we will need to describe bundles where the fiber over
$((E_1,V_1),(E_2,V_2))$ is canonically isomorphic to the Grassmannian $\operatorname{Grass}(t,\mathbb{H}^1_{21})$.\\

In order to give all such descriptions we will suitably adapt the results of \cite{L} about universal families of
extensions of coherent sheaves. Compared to that paper, some computations are easier because we will have to work
only with locally free sheaves all the time; other computations are more difficult since we are considering pairs of
objects of the form $(E,V)$ instead of single objects of the form $E$. In addition, we will describe also universal
families of non-degenerate extensions (see definition \ref{62}), that were not considered in that paper.\\

In particular, first of all we will prove a result of cohomology and base change for families of coherent systems in
the spirit of \cite{L}. Then we will fix any scheme $S$ of finite type over $\mathbb{C}$ and any pair of families
$(\mathcal{E}_l,\mathcal{V}_l)$  of coherent systems parametrized by $S$ for $l=1,2$ such that both

$$\textrm{dim Ext}^2((\mathcal{E}_2,\mathcal{V}_2)_s,(\mathcal{E}_1,\mathcal{V}_1)_s)\quad\textrm{and}\quad
\textrm{dim Hom}((\mathcal{E}_2,\mathcal{V}_2)_s,(\mathcal{E}_1,\mathcal{V}_1)_s)$$
are constant for all $s\in S$. In this setup we will show that there is a vector bundle $\eta:V\rightarrow S$ together
with a family of extensions

$$0\rightarrow(\eta',\eta)^*(\mathcal{E}_1,\mathcal{V}_1)\rightarrow(\mathcal{E}_V,\mathcal{V}_V)\rightarrow
(\eta',\eta)^*(\mathcal{E}_2,\mathcal{V}_2)\rightarrow 0$$
that has a universal property with respect to all extensions of the form

$$0\rightarrow(u',u)^*(\mathcal{E}_1,\mathcal{V}_1)\rightarrow(\mathcal{E}_{S'},\mathcal{V}_{S'})\rightarrow
(u',u)^*(\mathcal{E}_i,\mathcal{V}_i)\rightarrow 0$$
for all $S$-schemes $u:S'\rightarrow S$ (here $\eta'=\textrm{id}_C\times\eta$ and analogously for $u'$). We will prove analogous
results for universal families of non-split and non-degenerate extensions. Finally, we will use such results in order to describe
the schemes consisting of those coherent systems that have Jordan-H\"{o}lder filtration of length $2$ and that are added or
removed from $G(\alpha;n,d,k)$ by crossing any critical value $\alpha_c$ for a triple $(n,d,k)$.

\section{Definitions and basic facts}
Without further mention, every scheme will be of finite type over $\mathbb{C}$. $C$ will denote any complex smooth
projective irreducible curve and $g$ will denote its genus.\\

We recall (see \cite{KN}, \cite{LP}) that
a \emph{coherent system} $(E,V)$ on $C$ of type $(n,d,k)$ consists of an algebraic vector bundle $E$ over $C$, of rank
$n$ and degree $d$, and a linear subspace $V\subseteq \operatorname{H}^0(E)$ of dimension $k$. An equivalent definition
that is often used in the literature is the following. A coherent system of type $(n,d,k)$ is a triple $(E,\mathbb{V},
\phi)$ where $E$ is as before, $\mathbb{V}$ is a vector space of dimension $k$ and $\phi:\mathbb{V}\otimes
\mathcal{O}_C\rightarrow E$ is a sheaf map such that the induced morphism $\operatorname{H}^0(\phi):\mathbb{V}
\rightarrow \operatorname{H}^0(E)$ is injective. The vector space $V\subseteq \operatorname{H}^0(E)$ is then the image
$\operatorname{H}^0(\phi)(\mathbb{V})$.\\

For every scheme $S$, let us denote by $\pi_S$ the projection $C\times S\rightarrow S$; for any closed point $s$ in $S$ we
write $C_{s}$ for $C\times\{s\}$.

\begin{defin}\label{3}
({\cite[definition A.6]{BGMMN}}) A \emph{family of coherent systems of type $(n,d,k)$ on $C$ parametrized by a scheme
$S$} is any pair  $(\mathcal{E},\mathcal{V})$ where

\begin{itemize}
 \item $\mathcal{E}$ is a rank $n$ vector bundle on $C\times S$ such that $\mathcal{E}_s:=\mathcal{E}|_{C_s}$ has degree
    $d$ for all $s$ in $S$;
 \item $\mathcal{V}$ is a locally free subsheaf of ${\pi_S}_*\mathcal{E}$ of rank $k$, such that
   the fibers $\mathcal{V}_s$ map injectively to $\operatorname{H}^0(\mathcal{E}_s)$ for all $s$ in $S$.
\end{itemize}
\end{defin}

Another definition of family that appears in the literature is the following:

\begin{defin}\label{4}
(\cite[definition 2.5]{KN}) A \emph{family of coherent systems of type $(n,d,k)$ on $C$ parametrized by a scheme $S$}
is any triple $(\mathcal{E},\mathcal{V},\phi)$ where:

\begin{itemize}
 \item $\mathcal{E}$ is a rank $n$ coherent sheaf on $C\times S$, flat over $S$;
 \item $\mathcal{V}$ is a locally free sheaf on $S$ of rank $k$;
 \item $\phi:\pi_S^*\mathcal{V}\rightarrow\mathcal{E}$ is a morphism of $\mathcal{O}_{C\times S}$-modules,
\end{itemize}

such that for all $s$ in $S$ the fiber of $\phi$ over $s$ gives rise to a coherent system of type $(n,d,k)$ on 
$C_s\simeq C$.
\end{defin}

In particular, this implies that for every $s$ in $S$ the sheaf $\mathcal{E}_s$ is locally free at $(c,s)$ for all
$c\in C$, so by \cite[lemma 5.4]{N} we get that $\mathcal{E}$ is locally free. So the second definition implies the
first one. Conversely, for every family as in the first definition, one can easily associate a family according to the
second definition by considering the map $\phi$ of global sections (see also \cite[\S 3.5]{KN}). Therefore, we will use
without distinction either the first or the second definition.\\

\begin{remark}\label{5}
Both \cite{KN} and \cite{LP} allow $E$ to be any coherent sheaf in the definition of coherent systems; in \cite{LP}
$\operatorname{H}^0(\phi)$ is not required to be injective and the curve $C$ can be replaced by any projective scheme.
We will refer to such objects as \emph{weak coherent systems}. There is a definition of $\alpha$-(semi)stability for
weak coherent systems (see \cite{KN} and \cite{LP}), but we will not need to use it. We shall simply recall that on a
smooth curve a weak coherent system of type $(n,d,k)$ is $\alpha$-semistable (respectively, $\alpha$-stable) if and
only if it is (the evaluation map of) an $\alpha$-semistable (respectively, $\alpha$-stable) coherent system of type
$(n,d,k)$ (see \cite[lemma 2.5]{KN}), so this makes no difference. Similarly, there is a more general notion of family
of coherent systems that is used in \cite{LP} and in \cite{He}. In the case when their base $X$ is a projective curve
$C$ and we have a condition of flatness (see \cite[\S 1.3]{He} and \cite{LP}), we get that the notion of ``flat family
of coherent systems on $X\times S/S$'' in \cite{He} coincides with the notion of ``family of coherent systems''
parametrized by $S$ given in the previous definitions.
\end{remark}

A morphism of families of coherent systems $(\mathcal{E}_2,\mathcal{V}_2,\phi_2)\rightarrow(\mathcal{E}_1,
\mathcal{V}_1,\phi_1)$ parametrized by a scheme $S$ is any pair of morphisms $(\gamma,\delta)$ where $\gamma$ is a
morphism of vector bundles $\mathcal{E}_2\rightarrow\mathcal{E}_1$ over $C\times S$ and $\delta$ is a morphisms of
vector bundles $\mathcal{V}_2\rightarrow\mathcal{V}_1$ over $S$, such that we have a commutative diagram as follows:

\[\begin{tikzpicture}[xscale=1.5,yscale=-1.2]
    \node (A0_0) at (0, 0) {$\pi_S^*\mathcal{V}_2$};
    \node (A0_2) at (2, 0) {$\mathcal{E}_2$};
    \node (A1_1) at (1, 1) {$\curvearrowright$};
    \node (A2_0) at (0, 2) {$\pi_S^*\mathcal{V}_1$};
    \node (A2_2) at (2, 2) {$\mathcal{E}_1.$};
    \path (A0_0) edge [->]node [auto,swap] {$\scriptstyle{\pi_S^*\delta}$} (A2_0);
    \path (A2_0) edge [->]node [auto,swap] {$\scriptstyle{\phi_1}$} (A2_2);
    \path (A0_2) edge [->]node [auto] {$\scriptstyle{\gamma}$} (A2_2);
    \path (A0_0) edge [->]node [auto] {$\scriptstyle{\phi_2}$} (A0_2);
\end{tikzpicture}\]

We denote by

$$\operatorname{Hom}_S((\mathcal{E}_2,\mathcal{V}_2,\phi_2),(\mathcal{E}_1,\mathcal{V}_1,\phi_1))$$
the set of all such morphisms. If we use the first definition of coherent systems, we will simply write
$\operatorname{Hom}_S((\mathcal{E}_2,\mathcal{V}_2),(\mathcal{E}_1,\mathcal{V}_1))$. A morphism between 2 families of
coherent systems over $S=\textrm{Spec}(\mathbb{C})$ is any morphism $(E_2,V_2)\rightarrow(E_1,V_1)$ between coherent
systems. As such, it is completely determined as a morphism $\gamma:E_1\rightarrow E_2$ such that $\operatorname{H}^0
(\gamma)(V_2)\subset V_1$. In this case we will write $\operatorname{Hom}((E_2,V_2),(E_1,V_1))$ for the vector space
of all such morphisms.\\

For every family of coherent systems $(\mathcal{E},\mathcal{V})$ of type $(n,d,k)$ parametrized by $S$ and for every
morphism of schemes $u:S'\rightarrow S$, the pullback via $u$ is defined as

\begin{equation}\label{7}
(u',u)^*(\mathcal{E},\mathcal{V}):=(u'^*\mathcal{E},u^*\mathcal{V}),
\end{equation}
where $u'$ is defined by the cartesian diagram

\begin{equation}\label{8}
\begin{tikzpicture}[xscale=1.5,yscale=-1.2]
    \node (A0_0) at (0, 0) {$C\times S'$};
    \node (A0_2) at (2, 0) {$C\times S$};
    \node (A1_1) at (1, 1) {$\square$};
    \node (A2_0) at (0, 2) {$S'$};
    \node (A2_2) at (2, 2) {$S.$};
    \path (A2_0) edge [->]node [auto,swap] {$\scriptstyle{u}$} (A2_2);
    \path (A0_0) edge [->]node [auto,swap] {$\scriptstyle{\pi_{S'}}$} (A2_0);
    \path (A0_2) edge [->]node [auto] {$\scriptstyle{\pi_S}$} (A2_2);
    \path (A0_0) edge [->]node [auto] {$\scriptstyle{u'}$} (A0_2);
\end{tikzpicture}
\end{equation}

It easy to see that (\ref{7}) is a family of coherent systems of type $(n,d,k)$ on $C$, parametrized by $S'$. If we
use the definition of family as triple $(\mathcal{E},\mathcal{V},\phi)$, then the pullback of such a family by $u$ is
the triple $(u',u)^*(\mathcal{E},\mathcal{V},\phi):=(u'^*\mathcal{E},u^*\mathcal{V},\widetilde{\phi})$, where
$\widetilde{\phi}$ is defined as the composition

$$\widetilde{\phi}:\,\pi_{S'}^*u^*\mathcal{V}\stackrel{\sim}{\longrightarrow}u'^*\pi_S^*\mathcal{V}
\stackrel{u'^*\phi}{\longrightarrow}u'^*\mathcal{E},$$
where the first map is the canonical isomorphism induced by diagram (\ref{8}).\\

Given any family $(\mathcal{E},\mathcal{V},\phi)$ of type $(n,d,k)$ parametrized by a scheme $S$ and any locally free
$\mathcal{O}_S$-module $\mathcal{M}$, we define

$$(\mathcal{E},\mathcal{V},\phi)\otimes_S \mathcal{M}:=(\mathcal{E}\otimes_{C\times S}\pi_S^*\mathcal{M},\mathcal{V}
\otimes_S\mathcal{M},\phi\,\otimes_{C\times S}\,\textrm{id}_{\pi_S^*\mathcal{M}}).$$

This is a again a family of coherent systems parametrized by $S$.\\

\begin{remark}\label{9}
If $\mathcal{M}$ is only a coherent or quasi-coherent $\mathcal{O}_S$-module, then the tensor product
$(\mathcal{E},\mathcal{V},\phi)\otimes_S\mathcal{M}$ in general is only a family of weak coherent systems. To be more
precise, it is an algebraic system on $C\times S/S$ in the sense of \cite{He}. One should also need to consider such
objects in order to define the functors $\operatorname{Ext}^i$'s (see below), but we will not need to deal explicitly
with such objects in the present work.
\end{remark}

For every parameter $\alpha\in\mathbb{R}$ and for every coherent system $(E,V)$ of type $(n,d,k)$, the $\alpha$-slope
of $(E,V)$ is defined as

$$\mu_{\alpha}(E,V)=\mu_{\alpha_c}(n,d,k):=\frac{d}{n}+\alpha\,\frac{k}{n}.$$

Given a coherent system $(E,V)$, a coherent subsystem is a pair $(E',V')$ such that $E'$ is a subbundle of $E$ and
$V'\subseteq V\cap \operatorname{H}^0(E')$. We say that $(E,V)$ is $\alpha$-stable if

$$\mu_{\alpha}(E',V') < \mu_{\alpha}(E,V)$$
for all proper subsystems $(E',V')$ (i.e. those such that $(0,0)\subsetneq(E',V')\subsetneq(E,V)$). The notion of
$\alpha$-semistability is obtained by replacing the strict inequality before by a weak inequality. Whenever $k\geq 1$
there are no $\alpha$-semistable coherent systems of type $(n,d,k)$ for $\alpha<0$, so a posteriori we restrict to
$\alpha\in\mathbb{R}_{\geq 0}$. It is known (\cite[corollary 2.5.1]{KN}) that the $\alpha$-semistable coherent systems
of any fixed $\alpha$-slope form a noetherian and artinian abelian category in which the simple objects are the
$\alpha$-stable coherent systems. Any $\alpha$-semistable coherent system has an $\alpha$-Jordan-H\"{o}lder filtration;
in general this filtration is not unique; however the graded objects associated to different filtrations of the same
object are always isomorphic. We recall also:

\begin{thm}(\cite[theorem 1]{KN})
For every parameter $\alpha\in\mathbb{R}_{\geq 0}$ and for every type $(n,d,k)$ there exist schemes $G(\alpha;n,d,k)$
and $\widetilde{G}(\alpha;n,d,k)$ which are coarse moduli spaces for families of $\alpha$-stable (respectively
$\alpha$-semistable) coherent systems of type $(n,d,k)$. The closed points of $G(\alpha;n,d,k)$ are in bijection with
isomorphism classes of $\alpha$-stable coherent systems. The closed points of $\widetilde{G}(\alpha;n,d,k)$ are in
bijection with $S$-equivalence classes of $\alpha$-semistable coherent systems. $\widetilde{G}(\alpha;n,d,k)$ is a
projective variety and it contains $G(\alpha;n,d,k)$ as an open subscheme.
\end{thm}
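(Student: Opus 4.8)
This statement is quoted verbatim from \cite[theorem 1]{KN}, so in the body of the paper it is simply a citation; the following sketches the Geometric Invariant Theory construction that lies behind it. The strategy is to parametrize $\alpha$-semistable coherent systems of a fixed type by the points of a scheme on which a reductive group acts with orbits equal to isomorphism classes, and then to take a GIT quotient for a linearization that encodes $\alpha$-(semi)stability. The first ingredient is boundedness: if $(E,V)$ is $\alpha$-semistable of type $(n,d,k)$ and $E'\subseteq E$ is any subbundle of rank $n'$, set $V':=V\cap\operatorname{H}^0(E')$, so that $(E',V')$ is a subsystem; then $\alpha$-semistability together with $\dim V'\geq 0$ forces $\mu(E')=d'/n'\leq\mu_{\alpha}(n,d,k)$. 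Hence $\mu_{\max}(E)$ is bounded above by a constant depending only on $(n,d,k)$ and $\alpha$, so the underlying bundles form a bounded family; fixing a very ample $\mathcal{O}_C(1)$ and $N\gg 0$ one gets, for every such $(E,V)$, that $E(N):=E\otimes\mathcal{O}_C(1)^{\otimes N}$ is globally generated with $\operatorname{H}^1(E(N))=0$ and $h^0(E(N))=P$, where $P:=\chi(E(N))$ depends only on $n,d,N$ and $g$.

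Next one builds the parameter scheme. Fix a vector space $H\cong\mathbb{C}^{P}$; a choice of isomorphism $H\stackrel{\sim}{\longrightarrow}\operatorname{H}^0(E(N))$ yields a surjection $H\otimes\mathcal{O}_C(-N)\twoheadrightarrow E$, i.e. a point of a Quot scheme parametrizing quotients with the Hilbert polynomial of $E(N)$, and one lets $R$ be the locally closed subscheme of points where the quotient is a bundle of the right type and $H\to\operatorname{H}^0(E(N))$ is an isomorphism. Over $R$ there is a universal bundle $\widetilde{\mathcal{E}}$, and one forms the relative Grassmann bundle $\rho:\widehat{R}\to R$ whose fibre over $r$ is $\operatorname{Grass}(k,\operatorname{H}^0(\widetilde{\mathcal{E}}_r))$; then $\widehat{R}$ carries a tautological family of coherent systems of type $(n,d,k)$, the group $GL(H)=GL(P,\mathbb{C})$ acts on it, and two points of $\widehat{R}$ give isomorphic coherent systems precisely when they lie in the same orbit. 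So $\widetilde{G}$ and $G$ should be, respectively, the GIT quotient and the geometric quotient of suitable invariant open subschemes of a $GL(P)$-equivariant projective compactification of $\widehat{R}$.

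The core of the argument is the choice of linearization. One equips that compactification --- sitting naturally inside a product of the Quot scheme with a projectivized Pl\"{u}cker space --- with a $GL(P)$-linearized ample line bundle $L_{\alpha}$, formed as a tensor product of the standard determinant line bundle coming from the Quot side and the Pl\"{u}cker line bundle coming from the Grassmann side, with the ratio of the exponents governed by $\alpha$. One then checks, via the Hilbert--Mumford numerical criterion applied to one-parameter subgroups of $SL(P)$ (equivalently, to filtrations of $H$), that a point of $\widehat{R}$ is GIT-(semi)stable for $L_{\alpha}$ exactly when the corresponding coherent system is $\alpha$-(semi)stable, and moreover that every $L_{\alpha}$-semistable point of the compactification already lies in the good open part $\widehat{R}$, so that the compactification introduces no spurious objects. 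I expect this matching of the combinatorial weight $\mu^{L_{\alpha}}(x,\lambda)$ with the slope inequalities $\mu_{\alpha}(E',V')\leq\mu_{\alpha}(E,V)$ ranging over all subsystems, together with the uniform boundedness needed to pick a single $N$, to be the main obstacle.

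Finally, Mumford's GIT produces a projective variety $\widetilde{G}(\alpha;n,d,k):=\widehat{R}^{\,\mathrm{ss}}(L_{\alpha})/\!\!/GL(P)$ and an open subscheme $G(\alpha;n,d,k):=\widehat{R}^{\,\mathrm{s}}(L_{\alpha})/GL(P)$, the latter a geometric quotient. The statements on closed points are read off from the orbit structure: on the stable locus every orbit is closed with stabilizer $\mathbb{C}^{*}$, so closed points of $G$ biject with isomorphism classes of $\alpha$-stable coherent systems; on the semistable locus the closed orbits are exactly those of polystable systems, i.e. finite direct sums of $\alpha$-stable systems of the same $\alpha$-slope, which represent the $S$-equivalence classes, the associated graded of a Jordan-H\"{o}lder filtration being independent of the filtration. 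The coarse moduli property follows since $\widehat{R}$ has the local universal property for families of coherent systems of type $(n,d,k)$ --- every such family over a scheme $S$ becomes, after shrinking $S$, a pullback from $\widehat{R}$, by the cohomology-and-base-change choice of $N$ --- and the quotient morphism is a categorical quotient; this gives that $\widetilde{G}$ and $G$ corepresent the respective moduli functors and that $G\subseteq\widetilde{G}$ is open, as asserted.
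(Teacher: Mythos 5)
This statement is quoted verbatim from \cite[theorem 1]{KN}, and the paper gives no proof of its own beyond recalling in remark \ref{10} that it follows from a GIT construction (a projective parameter scheme with a group action and an $\alpha$-dependent linearization); your sketch follows exactly that construction -- Quot-scheme-plus-Grassmannian parameter space, $\alpha$-weighted polarization, Hilbert--Mumford matching of GIT (semi)stability with $\alpha$-(semi)stability, closed orbits giving $S$-equivalence, local universal property giving coarseness -- so it is essentially the same approach as the cited source. The only reservations are the one you flag yourself (the Hilbert--Mumford comparison is not carried out) and a minor imprecision: since $h^0(E)$ jumps on the Quot scheme, the fibration with fibres $\operatorname{Grass}(k,\operatorname{H}^0(\widetilde{\mathcal{E}}_r))$ is not literally a Grassmann bundle and must be constructed as the Grassmannian (or linear scheme) associated to a coherent sheaf, as is done in \cite{KN}, \cite{LP} and \cite{He}.
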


\begin{remark}\label{10} 
For each $(n,d,k)$ and $\alpha\geq 0$, the proof of this theorem follows from a GIT construction: there exist a
projective scheme $R$ and an action of $PGL(N)$ on $R$ (both $R$ and $N$ depend on $(n,d,k)$), together with a
linearization of that action depending on $\alpha$. Then if we denote by $\hat{G}(\alpha;n,d,k)$ the subscheme of
GIT $\alpha$-stable points of $R$, we get that the moduli space $G(\alpha;n,d,k)$ is obtained as the quotient
$\hat{G}(\alpha;n,d,k)/PGL(N)$. In particular, there exists a family $(\mathcal{Q},\mathcal{W})$ parametrized by
$\hat{G}(\alpha;n,d,k)$, that has the local universal property (see \cite[\S 3.5]{KN}). Analogous results holds for
the moduli scheme of semistable objects $\widetilde{G}(\alpha;n,d,k)$.
\end{remark}

Let us fix any triple $(n,d,k)$: for numerical reasons there are finitely many critical values $\{\alpha_0=0 < \alpha_1
< \cdots < \alpha_L\}\subset\mathbb{R}_{\geq 0}$ such that $G(\alpha;n,d,k)\simeq G(\alpha';n,d,k)$ for all $\alpha,
\alpha'\in\,]\alpha_i,\alpha_{i+1}[$ for all $i\in \{0,\cdots,L-1\}$ (see \cite{BGMN} for details).\\

The following definition is taken from \cite[\S 1.2]{He}. In that paper the definition is given for families of
algebraic systems; we state only the definition for the case of families of coherent systems.

\begin{defin}
Let $S$ be any scheme, let $(\mathcal{E}_l,\mathcal{V}_l)$ for $l=1,2$ be two families of coherent systems parametrized
by $S$ and let us denote by $\pi_S$ the projection $C\times S\rightarrow S$. Then we define a sheaf of
$\mathcal{O}_S$-modules

$$\mathcal{F}=\mathcal{H}om_{\pi_S}((\mathcal{E}_2,\mathcal{V}_2),(\mathcal{E}_1,\mathcal{V}_1))$$
as follows: for every open set $U\subset S$ we set 

$$\mathcal{F}(U):=\operatorname{Hom}_U((\mathcal{E}_2,\mathcal{V}_2)|_U,(\mathcal{E}_1,\mathcal{V}_1)|_U)=
\operatorname{Hom}_U\left((\mathcal{E}_2|_{\pi_S^{-1}U},\mathcal{V}_2|_U),(\mathcal{E}_1|_{\pi_S^{-1}U},
\mathcal{V}_1|_U)\right).$$

This is actually a sheaf and the functor $\mathcal{H}om_{\pi_S}((\mathcal{E}_2,\mathcal{V}_2),-)$ is left exact.
We denote by

$$\mathcal{E}xt^i_{\pi_S}((\mathcal{E}_2,\mathcal{V}_2),-)\quad\forall\,i\geq 1$$
its right derived functors (see \cite{He} for the proof that such derived functors exist). Moreover, we will denote by
$\operatorname{Ext}^i_S((\mathcal{E}_2,\mathcal{V}_2),-)$ the right derived functors of the functor
$\operatorname{Hom}_S((\mathcal{E}_2,\mathcal{V}_2),-)$. If $S=\textrm{Spec}(\mathbb{C})$, then the 2 functors
$\mathcal{E}xt^i_{\pi_S}((\mathcal{E}_2,\mathcal{V}_2),-)$ and $\operatorname{Ext}^i_S((\mathcal{E}_2,
\mathcal{V}_2),-)$ coincide for every $i$ and we denote them by $\operatorname{Ext}^i((E_2,V_2),-)$. In general their
relationship is accounted for by a spectral sequence, see \cite[prop. A.9]{BGMMN}. 
\end{defin}

By using remark \ref{5} and \cite[corollaire 1.20]{He} for the morphism $\pi_S$ we have the following useful result
of semicontinuity.

\begin{proposition}\label{11}
Let $(\mathcal{E}_l,\mathcal{V}_l)$ be two families of coherent systems (not necessarily of the same type),
parametrized by a scheme $S$ for $l=1,2$. Then for all $i\geq 0$ the function

$$t^i(s):=\operatorname{dim }\operatorname{Ext}^i((\mathcal{E}_2,\mathcal{V}_2)_s,(\mathcal{E}_1,\mathcal{V}_1)_s)$$
is upper semicontinuous on $S$. If $S$ is integral and for a certain $i$ the function $t^i(s)$ is constant on $S$, then
the sheaf $\mathcal{E}xt^i_{\pi_S}((\mathcal{E}_2,\mathcal{V}_2),(\mathcal{E}_1,\mathcal{V}_1))$ is locally free on $S$.
\end{proposition}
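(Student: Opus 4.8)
The plan is to deduce both assertions from the general semicontinuity and base-change theorem for the relative $\operatorname{Ext}$ of families of (algebraic, hence in particular coherent) systems proved in \cite[corollaire 1.20]{He}, applied to the projection $\pi_S\colon C\times S\to S$. The first thing to check is that we are in the setting of that result: $\pi_S$ is proper (in fact projective), since $C$ is projective over $\mathbb{C}$, and by Definition \ref{3} each $(\mathcal{E}_l,\mathcal{V}_l)$ is flat over $S$ --- the bundle $\mathcal{E}_l$ on $C\times S$ is locally free, hence $S$-flat, and $\mathcal{V}_l$ is locally free on $S$. By Remark \ref{5}, a family of coherent systems on $C$ parametrized by $S$ in the sense used here is exactly a flat family of coherent systems on $C\times S/S$ in He's sense, so the sheaves $\mathcal{E}xt^i_{\pi_S}((\mathcal{E}_2,\mathcal{V}_2),(\mathcal{E}_1,\mathcal{V}_1))$ introduced above are precisely the relative $\operatorname{Ext}$-sheaves to which his theorem applies.

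Next one must relate the fibrewise invariants $t^i(s)$ to these relative sheaves. For each closed point $s\in S$ --- so that $k(s)=\mathbb{C}$ and $C_s\simeq C$ --- there is a base-change morphism
$$\mathcal{E}xt^i_{\pi_S}((\mathcal{E}_2,\mathcal{V}_2),(\mathcal{E}_1,\mathcal{V}_1))\otimes_{\mathcal{O}_S}k(s)\longrightarrow \operatorname{Ext}^i\big((\mathcal{E}_2,\mathcal{V}_2)_s,(\mathcal{E}_1,\mathcal{V}_1)_s\big),$$
whose target is the $\operatorname{Ext}^i$ of the restricted coherent systems over $C_s$; this agrees with the absolute $\operatorname{Ext}^i$ over $\operatorname{Spec}(\mathbb{C})$ appearing in the statement because $\mathcal{E}_2$ is $S$-flat, so restriction to $C_s$ is exact and $(\mathcal{E}_2,\mathcal{V}_2)_s$ is a genuine coherent system on $C$. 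Granting this identification, \cite[corollaire 1.20]{He} yields at once that $s\mapsto\dim_{k(s)}\big(\mathcal{E}xt^i_{\pi_S}(\cdots)\otimes_{\mathcal{O}_S}k(s)\big)=t^i(s)$ is upper semicontinuous on $S$, which is the first assertion.

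For the second assertion one appeals to the remaining content of \cite[corollaire 1.20]{He}, namely the classical Grothendieck ``cohomology and base change'' conclusion: the complex of $\mathcal{O}_S$-modules computing $\mathcal{E}xt^\bullet_{\pi_S}$ can be taken bounded and made up of locally free modules --- on a smooth projective curve $\operatorname{Ext}^i$ of coherent systems vanishes for $i\geq 3$ --- so over the reduced scheme $S$ (here assumed integral) constancy of $t^i$ forces the base-change map in degree $i$ to be an isomorphism and $\mathcal{E}xt^i_{\pi_S}((\mathcal{E}_2,\mathcal{V}_2),(\mathcal{E}_1,\mathcal{V}_1))$ to be locally free of rank $t^i$. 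The one step that is not mere quotation is the base-change identification of the second paragraph: one must know that He's relative $\operatorname{Ext}$ for families of coherent systems commutes with restriction to a closed fibre, i.e.\ is computed termwise from $S$-flat sheaves. On a smooth projective curve this is unproblematic --- and, in any case, it is exactly what is packaged into his corollary --- so the expected strategy is simply to invoke \cite[corollaire 1.20]{He} and treat the rest as formal.
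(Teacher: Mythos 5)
Your proposal follows exactly the paper's route: the proposition is obtained there, without a separate proof, by combining Remark \ref{5} (identifying families of coherent systems with He's flat families on $C\times S/S$) with \cite[corollaire 1.20]{He} applied to the projection $\pi_S$, which is precisely what you do. The extra remarks you add (flatness of the families, identification of the fibrewise $\operatorname{Ext}^i$ via base change) are consistent with the paper's framework and do not change the argument.
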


Let us fix any triple $(n,d,k)$, any critical value $\alpha_c$ for that triple and any $(E,V)$ that is stable only on
one side of $\alpha_c$. Then it is easy to see that $(E,V)$ is strictly $\alpha_c$-semistable and so it has an
$\alpha_c$-Jordan-H\"{o}lder filtration of length $\geq 2$. The final aim of this paper is to describe the schemes of
those $(E,V)$'s that are stable only on one side of $\alpha_c$ and that have $\alpha_c$-Jordan-H\"{o}lder filtration
of length $2$. We denote by $G^{+,2}(\alpha_c;n,d,k)$, respectively by $G^{-,2}(\alpha_c;n,d,k)$, the set of those
$(E,V)$'s that have $\alpha_c$-Jordan-H\"{o}lder filtration of length $2$ and such that $(E,V)\in G(\alpha_c^+;n,d,k)$
and $(E,V)\not\in G(\alpha_c^-;n,d,k)$, respectively $(E,V)\in G(\alpha_c^-;n,d,k)$ and $(E,V)\not\in
G(\alpha_c^+;n,d,k)$.\\

As a consequence of \cite[lemma 6.3]{BGMN} we get:

\begin{lemma}\label{91}
Let us fix any triple $(n,d,k)$ and any critical value $\alpha_c$ for it. Let us suppose that $(E,V)\in
G^{+,2}(\alpha_c;n,d,k)$, respectively that $(E,V)\in G^{-,2}(\alpha_c;n,d,k)$. Then $(E,V)$ is associated to a unique
class of a non-split extension

\begin{equation}\label{12}
0\longrightarrow(E_1,V_1)\stackrel{\gamma}{\longrightarrow}(E,V)\stackrel{\delta}{\longrightarrow}(E_2,V_2)
\longrightarrow 0,
\end{equation}
modulo the action of $\mathbb{C}^*$ (given by $\gamma\mapsto\lambda\cdot\gamma$ for $\lambda\in\mathbb{C}^*$)) in which:

\begin{itemize}
 \item $(E_1,V_1)$ and $(E_2,V_2)$ are uniquely determined by $(E,V)$; we denote by $(n_l,d_l,k_l)$ their types for
   $l=1,2$;
 \item both $(E_1,V_1)$ and $(E_2,V_2)$ are $\alpha_c$-stable with
 
  \begin{equation}\label{13}
  d_1=n_1\left(\frac{d}{n}+\alpha_c\left(\frac{k}{n}-\frac{k_1}{n_1}\right)\right),
  \end{equation}
  $$\frac{k_1}{n_1}<\frac{k}{n}\quad\textrm{respectively}\quad\frac{k_1}{n_1}>\frac{k}{n}.$$
\end{itemize}

Conversely, let us fix any pair $(n_1,k_1)$ such that 

$$0<n_1<n,\quad 0\leq k_1\leq k,$$
$$\frac{k_1}{n_1}<\frac{k}{n}\quad\textrm{respectively}\quad\frac{k_1}{n_1}>\frac{k}{n}$$
and let us set $d_1$ as in (\ref{13}); moreover let us set $n_2:=n-n_1, d_2:=d-d_1, k_2:=k-k_1$. Then for every pair
of $\alpha_c$-stable coherent systems $(E_l,V_l)$ of type $(n_l,d_l,k_l)$ for $l=1,2$ we have that any $(E,V)$ that
appears in a non-split sequence (\ref{12}) belongs to $G^{+,2}(\alpha_c;n,d,k)$, respectively to
$G^{-,2}(\alpha_c;n,d,k)$.
\end{lemma}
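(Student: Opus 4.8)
The proof naturally splits into the two directions already visible in the statement. The plan is as follows.

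\medskip

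\textbf{The forward direction.} Suppose $(E,V)\in G^{+,2}(\alpha_c;n,d,k)$ (the minus case is entirely symmetric, replacing the inequalities on slopes by the opposite ones). First I would invoke \cite[lemma 6.3]{BGMN} directly: since $(E,V)$ is $\alpha_c$-stable on the plus side but not on the minus side, it is strictly $\alpha_c$-semistable, and by hypothesis its $\alpha_c$-Jordan--Hölder filtration has length exactly $2$. Thus there is an $\alpha_c$-semistable subsystem $(E_1,V_1)$ with $\alpha_c$-stable quotient $(E_2,V_2)$, both of the same $\alpha_c$-slope as $(E,V)$, and the length-$2$ hypothesis forces both $(E_1,V_1)$ and $(E_2,V_2)$ to be $\alpha_c$-stable (each being a single graded piece). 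The equality of $\alpha_c$-slopes $\mu_{\alpha_c}(n_1,d_1,k_1)=\mu_{\alpha_c}(n,d,k)$, solved for $d_1$, gives exactly formula \eqref{13}. The strict inequality $k_1/n_1<k/n$ is the content of the statement that $(E,V)$ lies on the plus side and not the minus side: since $\mu_{\alpha^+}(E_1,V_1)<\mu_{\alpha^+}(E,V)$ must hold for $(E,V)$ to be $\alpha_c^+$-stable while $\mu_{\alpha^-}(E_1,V_1)\geq\mu_{\alpha^-}(E,V)$ (destabilization on the minus side), comparing the linear-in-$\alpha$ expressions $\mu_\alpha(E_1,V_1)$ and $\mu_\alpha(E,V)$ which agree at $\alpha_c$, the slope of the first in $\alpha$ must be strictly smaller, i.e. $k_1/n_1<k/n$. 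For uniqueness of $(E_1,V_1)$ and $(E_2,V_2)$: the graded object of an $\alpha_c$-Jordan--Hölder filtration is independent of the filtration, and with length $2$ this graded object is $(E_1,V_1)\oplus(E_2,V_2)$; distinguishing the two summands is possible because their slopes differ in $\alpha$ near $\alpha_c$ (one destabilizes, one does not), so the ordered pair is determined by $(E,V)$. Finally, the extension class: because $(E_1,V_1)$ and $(E_2,V_2)$ are non-isomorphic (distinct types, or distinct $k_l/n_l$), $\operatorname{Hom}((E_2,V_2),(E_1,V_1))=0$ and $\operatorname{Aut}(E_l,V_l)=\mathbb{C}^*$ by $\alpha_c$-stability; hence the extension \eqref{12} is determined up to the $\mathbb{C}^*$-scaling of $\gamma$ by the isomorphism class of $(E,V)$ together with the choice of $(E_1,V_1)\hookrightarrow(E,V)$, and the class is non-split precisely because $(E,V)\ncong(E_1,V_1)\oplus(E_2,V_2)$ (a split extension would be $\alpha_c^\pm$-semistable but not stable on either side).

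\medskip

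\textbf{The converse direction.} Fix $(n_1,k_1)$ with $0<n_1<n$, $0\leq k_1\leq k$, and $k_1/n_1<k/n$ (plus case), set $d_1$ by \eqref{13}, and $n_2=n-n_1$, $d_2=d-d_1$, $k_2=k-k_1$. One checks immediately from \eqref{13} that $\mu_{\alpha_c}(n_1,d_1,k_1)=\mu_{\alpha_c}(n,d,k)$ and hence also $\mu_{\alpha_c}(n_2,d_2,k_2)=\mu_{\alpha_c}(n,d,k)$. Now take any $\alpha_c$-stable $(E_l,V_l)$ of type $(n_l,d_l,k_l)$ and any non-split extension \eqref{12}. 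I would verify that $(E,V)$ is $\alpha_c$-semistable first: any subsystem $(E',V')$ intersects the filtration, giving an exact sequence $0\to(E',V')\cap(E_1,V_1)\to(E',V')\to(E'',V'')\to 0$ with $(E'',V'')$ a subsystem of $(E_2,V_2)$; by $\alpha_c$-stability of the $(E_l,V_l)$ both outer terms have $\alpha_c$-slope $\leq\mu_{\alpha_c}(E,V)$, hence so does $(E',V')$. This is the standard extension-of-semistables argument and I expect it to go through routinely; the Jordan--Hölder length is exactly $2$ because $(E_1,V_1)$ and $(E_2,V_2)$ are $\alpha_c$-stable of the right slope. It remains to show $(E,V)\in G(\alpha_c^+;n,d,k)$ and $(E,V)\notin G(\alpha_c^-;n,d,k)$. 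For the first, I would show that for $\alpha$ slightly larger than $\alpha_c$ every proper subsystem $(E',V')$ has $\mu_\alpha(E',V')<\mu_\alpha(E,V)$: decomposing as above, and using that near $\alpha_c$ only finitely many subsystems can destabilize (a boundedness/finiteness statement from \cite{BGMN}), each relevant $(E',V')$ either has $\mu_{\alpha_c}(E',V')<\mu_{\alpha_c}(E,V)$ (so the strict inequality persists for $\alpha$ near $\alpha_c$) or $\mu_{\alpha_c}(E',V')=\mu_{\alpha_c}(E,V)$, in which case $(E',V')$ must be $(E_1,V_1)$ itself (the unique $\alpha_c$-subsystem of that slope, using that the extension is non-split so $(E_2,V_2)$ does not embed), and then $k'/n'=k_1/n_1<k/n$ forces $\mu_{\alpha}(E_1,V_1)<\mu_\alpha(E,V)$ for $\alpha>\alpha_c$; symmetrically $\mu_\alpha(E_1,V_1)>\mu_\alpha(E,V)$ for $\alpha<\alpha_c$, so $(E_1,V_1)$ destabilizes $(E,V)$ on the minus side and $(E,V)\notin G(\alpha_c^-;n,d,k)$.

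\medskip

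\textbf{Main obstacle.} The genuinely delicate point is the finiteness argument needed in the converse: to pass from ``$(E,V)$ is $\alpha_c$-semistable'' to ``$(E,V)$ is $\alpha$-stable for $\alpha$ in a one-sided punctured neighbourhood of $\alpha_c$'', one needs that the set of isomorphism types of subsystems $(E',V')$ that can have $\mu_\alpha(E',V')\geq\mu_\alpha(E,V)$ for $\alpha$ near $\alpha_c$ is bounded, so that finitely many linear inequalities in $\alpha$ control stability on each side. This is exactly the kind of statement underlying the finiteness of critical values in \cite{BGMN}, and the cleanest route is to cite \cite[lemma 6.3]{BGMN} and the surrounding discussion rather than reprove it; I would also note that the only subsystem with $\mu_{\alpha_c}$-slope equal to that of $(E,V)$ is $(E_1,V_1)$, which is where non-splitness of \eqref{12} is essential. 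Everything else -- the slope bookkeeping via \eqref{13}, the vanishing of $\operatorname{Hom}$, the $\mathbb{C}^*$-action on the extension class -- is formal.
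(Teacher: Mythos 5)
Your proposal is correct and follows essentially the same route as the paper, which gives no separate argument at all: the lemma is stated there simply as a consequence of \cite[lemma 6.3]{BGMN}, exactly the reduction you make. Your additional bookkeeping (solving the equal-$\alpha_c$-slope condition for $d_1$, the $\operatorname{Hom}$-vanishing and $\operatorname{Aut}=\mathbb{C}^*$ argument for uniqueness of the class modulo $\mathbb{C}^*$, and the observation that non-splitness excludes $(E_2,V_2)$ as a subsystem) is a sound elaboration of what the citation is meant to cover, including the one-sided stability characterization near $\alpha_c$ that you rightly flag as the only delicate point.
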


\begin{defin}\label{14}
Let us fix any triple $(n,d,k)$ and any critical value $\alpha_c$ for it; moreover let us also fix any pair $(n_1,k_1)$
with $0<n_1<n$, $0\leq k_1\leq k$. Let us set $d_1$ as in (\ref{13}), $n_2:=n-n_1$, $d_2:=d-d_1$ and $k_2:=k-k_1$.
Then we denote by $G(\alpha_c;n,d,k;n_1,k_1)$ the set of all triples $((E_1,V_1),(E_2,V_2),[\xi])$ such that
$(E_l,V_l)\in G(\alpha_c;n_l,d_l,k_l)$ for $l=1,2$ and $[\xi]\in\mathbb{P}(\operatorname{Ext}^1((E_2,V_2),(E_1,V_1)))$. 
\end{defin}

Note that the invariant $d_1$ defined as in (\ref{13}) (and so also $d_2$) can be a non-integer; if this happens, then
it means that there are no $(E_l,V_l)$'s of type $(n_l,d_l,k_l)$ and so the set $G(\alpha_c;n,d,k;n_1,k_1)$ is empty.
For every $(E,V)$ in $G^{+,2}(\alpha_c;n,d,k)$ or $G^{-,2}(\alpha_c;n,d,k)$ the coherent subsystem $(E_1,V_1)$ of the
previous lemma is completely determined by $(E,V)$ and so is in particular $(n_1,k_1)$. Therefore we get:

\begin{corollary}\label{15}
For any triple $(n,d,k)$ and any critical value $\alpha_c$ for it, the set $G^{+,2}(\alpha_c;n,d,k)$ has a
stratification

$$G^{+,2}(\alpha_c;n,d,k)=\coprod_{(n_1,k_1)} G(\alpha_c;n,d,k;n_1,k_1)$$
where the disjoint union is taken over all the pairs $(n_1,k_1)$ such that 

$$0<n_1<n,\quad 0\leq k_1\leq k,\quad\frac{k_1}{n_1}<\frac{k}{n},\quad G(\alpha_c;n_1,d_1,k_1)\neq\varnothing\neq
G(\alpha_c;n_2,d_2,k_2).$$

Analogously,

$$G^{-,2}(\alpha_c;n,d,k)=\coprod_{(n_1,k_1)} G(\alpha_c;n,d,k;n_1,k_1)$$
where the disjoint union is taken over all the pairs $(n_1,k_1)$ such that 

$$0<n_1<n,\quad 0\leq k_1\leq k,\quad\frac{k_1}{n_1}>\frac{k}{n},\quad G(\alpha_c;n_1,d_1,k_1)\neq\varnothing\neq
G(\alpha_c;n_2,d_2,k_2).$$
\end{corollary}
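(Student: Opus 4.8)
The plan is to deduce Corollary~\ref{15} directly from Lemma~\ref{91} together with Definition~\ref{14}. First I would observe that the content of the corollary is purely set-theoretic: it claims that the sets $G(\alpha_c;n,d,k;n_1,k_1)$, as $(n_1,k_1)$ ranges over the stated index set, form a partition of $G^{+,2}(\alpha_c;n,d,k)$ (and similarly on the $-$ side). So the proof splits into three verifications: that each $G(\alpha_c;n,d,k;n_1,k_1)$ in the index set is contained in $G^{+,2}(\alpha_c;n,d,k)$; that every element of $G^{+,2}(\alpha_c;n,d,k)$ lies in at least one such piece; and that the pieces are pairwise disjoint. Along the way I must also check that the index set appearing in the corollary is exactly the correct one, i.e. that one may restrict to those $(n_1,k_1)$ for which $G(\alpha_c;n_1,d_1,k_1)$ and $G(\alpha_c;n_2,d_2,k_2)$ are both nonempty without losing or gaining any points.

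For the first inclusion I would argue as follows. Fix $(n_1,k_1)$ in the index set and a triple $((E_1,V_1),(E_2,V_2),[\xi])\in G(\alpha_c;n,d,k;n_1,k_1)$. By Definition~\ref{14} each $(E_l,V_l)$ is $\alpha_c$-stable of type $(n_l,d_l,k_l)$ with $d_1$ given by~(\ref{13}), and $[\xi]\in\mathbb{P}(\operatorname{Ext}^1((E_2,V_2),(E_1,V_1)))$ corresponds to a \emph{non-split} extension class; choosing a representative $\xi$ produces a non-split sequence~(\ref{12}) with middle term some $(E,V)$ of type $(n,d,k)$. The converse part of Lemma~\ref{91} then applies verbatim (its hypotheses are precisely $0<n_1<n$, $0\le k_1\le k$, the slope inequality $k_1/n_1<k/n$ on the $+$ side, $d_1$ as in~(\ref{13}), and $\alpha_c$-stability of both sub and quotient), and yields $(E,V)\in G^{+,2}(\alpha_c;n,d,k)$. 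Here I would note that the isomorphism class of $(E,V)$ is independent of the choice of representative $\xi$ of $[\xi]$, since scaling $\xi$ by $\lambda\in\mathbb{C}^*$ gives an isomorphic extension; this is exactly the $\mathbb{C}^*$-action mentioned in Lemma~\ref{91}, so the assignment $((E_1,V_1),(E_2,V_2),[\xi])\mapsto (E,V)$ is well defined into $G^{+,2}(\alpha_c;n,d,k)$.

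For surjectivity and disjointness I would invoke the direct (first) part of Lemma~\ref{91}. Given $(E,V)\in G^{+,2}(\alpha_c;n,d,k)$, that lemma produces a non-split extension~(\ref{12}) with $(E_1,V_1),(E_2,V_2)$ $\alpha_c$-stable, \emph{uniquely determined} by $(E,V)$, with types satisfying $0<n_1<n$, $0\le k_1\le k$ (the latter because $V_1\subseteq V$ forces $k_1\le k$, and $k_1\ge0$ trivially), $d_1$ as in~(\ref{13}), and $k_1/n_1<k/n$. Since $(E_l,V_l)$ exists and is $\alpha_c$-stable, $G(\alpha_c;n_l,d_l,k_l)\neq\varnothing$ for $l=1,2$, so $(n_1,k_1)$ lies in the index set of the corollary; and the extension class, taken modulo $\mathbb{C}^*$, is likewise uniquely determined by $(E,V)$, so it gives a well-defined point $[\xi]\in\mathbb{P}(\operatorname{Ext}^1((E_2,V_2),(E_1,V_1)))$ and hence a point of $G(\alpha_c;n,d,k;n_1,k_1)$ mapping to $(E,V)$. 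Disjointness is then immediate from the uniqueness clause: if $(E,V)$ belonged to two pieces indexed by $(n_1,k_1)$ and $(n_1',k_1')$, each would supply a non-split extension of the form~(\ref{12}), and the uniqueness of $(E_1,V_1)$ in Lemma~\ref{91} forces $(n_1,k_1)=(n_1',k_1')$ and moreover identifies the two triples. The $-$ side is identical after replacing the inequality $k_1/n_1<k/n$ by $k_1/n_1>k/n$ throughout, which is exactly the way Lemma~\ref{91} is phrased.

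The only genuinely delicate point, and the one I would spell out carefully, is the bijectivity of the correspondence between points of $G(\alpha_c;n,d,k;n_1,k_1)$ and the coherent systems $(E,V)$ they give rise to — specifically that distinct points of $\mathbb{P}(\operatorname{Ext}^1)$ (or distinct $((E_1,V_1),(E_2,V_2))$) yield non-isomorphic $(E,V)$. This does not follow from the statements as given unless one reads ``associated to a unique class'' in Lemma~\ref{91} as asserting exactly that the map (extension data) $\to (E,V)$ is injective on each stratum; I would make that reading explicit. Granting it, the three verifications above combine to give the two displayed stratifications, completing the proof. No further computation is required; the corollary is a bookkeeping consequence of Lemma~\ref{91} and Definition~\ref{14}.
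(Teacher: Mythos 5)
Your proposal is correct and follows essentially the same route as the paper: the paper deduces the corollary directly from Lemma \ref{91} together with the remark preceding it, namely that $(E_1,V_1)$ (hence $(n_1,k_1)$) and the extension class modulo $\mathbb{C}^*$ are completely determined by $(E,V)$, which is exactly the uniqueness you invoke for surjectivity, disjointness, and the bijectivity of the correspondence. Your explicit reading of ``associated to a unique class'' is the intended one, so no gap remains.
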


The motivation for this paper is that of giving a scheme theoretic description of each set of the form $G(\alpha_c;
n,d,k;n_1,k_1)$ and to prove that if $\frac{k_1}{n_1}<\frac{k}{n}$, respectively if $\frac{k_1}{n_1}>\frac{k}{n}$, then
$G(\alpha_c;n,d,k;n_1,k_1)$ is actually a subscheme of $G(\alpha_c^+;n,d,k)$, respectively of $G(\alpha_c^-;n,d,k)$.
In order to do that we will need the following result.

\begin{proposition}\label{16}
\cite[proposition 3.2]{BGMN}
Let $(E_l,V_l)$ be two coherent systems on $C$ of type $(n_l,d_l,k_l)$ for $l=1,2$. Let

$$\mathbb{H}^0_{21}:=\operatorname{Hom}((E_2,V_2),(E_1,V_1))\quad\textrm{and}\quad\mathbb{H}^2_{21}:=
\operatorname{Ext}^2((E_2,V_2),(E_1,V_1));$$
then:

$$\operatorname{dim }\operatorname{Ext}^1((E_2,V_2),(E_1,V_1))=C_{21}+\operatorname{dim }\mathbb{H}^0_{21}+
\operatorname{dim }\mathbb{H}^2_{21},$$
where

$$C_{21}:=n_1n_2(g-1)-d_1n_2+d_2n_1+k_2d_1-k_2n_1(g-1)-k_1k_2.$$
\end{proposition}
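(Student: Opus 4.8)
The plan is to derive the dimension formula for $\operatorname{Ext}^1((E_2,V_2),(E_1,V_1))$ from a long exact sequence relating the $\operatorname{Hom}$ and $\operatorname{Ext}^i$ groups of coherent systems to ordinary sheaf cohomology on $C$, together with Euler characteristic (Riemann--Roch) computations. The starting point is the standard hypercohomology description of the $\operatorname{Ext}$-groups of coherent systems: viewing $(E_l,V_l)$ as the two-term complex $\phi_l\colon \mathbb{V}_l\otimes\mathcal{O}_C\to E_l$, one has a long exact sequence
\begin{equation*}
0\to\mathbb{H}^0_{21}\to\operatorname{Hom}(E_2,E_1)\oplus\operatorname{Hom}(\mathbb{V}_2,\mathbb{V}_1)\to\operatorname{Hom}(\mathbb{V}_2\otimes\mathcal{O}_C,E_1)\to\operatorname{Ext}^1((E_2,V_2),(E_1,V_1))\to\cdots
\end{equation*}
continuing through $\operatorname{Ext}^1(E_2,E_1)$ and terminating with $\operatorname{Ext}^2((E_2,V_2),(E_1,V_1))$ mapping onto (a subquotient of) $\operatorname{Ext}^2(E_2,E_1)=0$, since on a smooth curve $E_2$ has homological dimension $1$. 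This is exactly the content of the hyper-Ext spectral sequence for the morphism $\phi_2$, and it is the standard tool used in \cite{BGMN}; I would either cite it or spell out the three-term complex whose hypercohomology computes these groups.

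First I would write down this long exact sequence explicitly. Its terms are built from $\operatorname{Hom}(E_2,E_1)=\operatorname{H}^0(E_2^\vee\otimes E_1)$, $\operatorname{Ext}^1(E_2,E_1)=\operatorname{H}^1(E_2^\vee\otimes E_1)$, from $\operatorname{Hom}(\mathbb{V}_2,\mathbb{V}_1)$ which has dimension $k_1k_2$, and from $\operatorname{Hom}(\mathbb{V}_2\otimes\mathcal{O}_C,E_1)=\operatorname{H}^0(E_1)\otimes\mathbb{V}_2^\vee$ together with its $\operatorname{H}^1$ counterpart $\operatorname{H}^1(E_1)\otimes\mathbb{V}_2^\vee$. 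Taking the alternating sum of dimensions along the exact sequence yields
\begin{equation*}
\operatorname{dim}\mathbb{H}^0_{21}-\operatorname{dim}\operatorname{Ext}^1((E_2,V_2),(E_1,V_1))+\operatorname{dim}\mathbb{H}^2_{21}=\chi(E_2^\vee\otimes E_1)-k_2\cdot\chi(E_1)+k_1k_2,
\end{equation*}
where the sign and placement of the $k_1k_2$ term must be tracked carefully from the maps in the complex. Rearranging gives $\operatorname{dim}\operatorname{Ext}^1=-\chi(E_2^\vee\otimes E_1)+k_2\chi(E_1)-k_1k_2+\operatorname{dim}\mathbb{H}^0_{21}+\operatorname{dim}\mathbb{H}^2_{21}$, so it remains to identify $C_{21}=-\chi(E_2^\vee\otimes E_1)+k_2\chi(E_1)-k_1k_2$.

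Next I would evaluate the two Euler characteristics by Riemann--Roch on $C$. The bundle $E_2^\vee\otimes E_1$ has rank $n_1n_2$ and degree $d_1n_2-d_2n_1$, hence $\chi(E_2^\vee\otimes E_1)=d_1n_2-d_2n_1+n_1n_2(1-g)$; and $\chi(E_1)=d_1+n_1(1-g)$. Substituting,
\begin{equation*}
C_{21}=-\bigl(d_1n_2-d_2n_1+n_1n_2(1-g)\bigr)+k_2\bigl(d_1+n_1(1-g)\bigr)-k_1k_2,
\end{equation*}
which simplifies to $n_1n_2(g-1)-d_1n_2+d_2n_1+k_2d_1-k_2n_1(g-1)-k_1k_2$, matching the claimed expression. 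The main obstacle is purely bookkeeping: getting the hyper-Ext long exact sequence in precisely the right form—in particular making sure that $\operatorname{Ext}^2((E_2,V_2),(E_1,V_1))$ really is the last nonzero term (so the alternating sum closes up) and that the $\operatorname{Hom}(\mathbb{V}_2,\mathbb{V}_1)$ and $\operatorname{Hom}(\mathbb{V}_2\otimes\mathcal{O}_C,E_1)$ contributions enter with the signs that produce $-k_1k_2$ rather than $+k_1k_2$. Since the paper already takes this proposition as a citation from \cite{BGMN}, I would present the argument at the level of "apply the long exact sequence and Riemann--Roch" and leave the sign verification as a routine check.
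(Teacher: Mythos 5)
Your proposal is correct: the paper itself gives no proof (it simply cites \cite[proposition 3.2]{BGMN}), and your argument is exactly the one used there, namely the long exact sequence
$0\to\mathbb{H}^0_{21}\to\operatorname{Hom}(E_2,E_1)\oplus\operatorname{Hom}(V_2,V_1)\to\operatorname{Hom}(V_2,\operatorname{H}^0(E_1))\to\mathbb{H}^1_{21}\to\operatorname{H}^1(E_2^\vee\otimes E_1)\to\operatorname{Hom}(V_2,\operatorname{H}^1(E_1))\to\mathbb{H}^2_{21}\to 0$
coming from the two-term complex description, followed by an alternating-sum and Riemann--Roch computation. Your bookkeeping (signs, $\deg(E_2^\vee\otimes E_1)=d_1n_2-d_2n_1$, the vanishing of $\operatorname{Ext}^2(E_2,E_1)$ on a curve) checks out and reproduces $C_{21}$ exactly.
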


\section{Cohomology and base change for families of coherent systems}

We want to prove a series of statements analogous to those in \cite{L} for families of extensions of coherent systems
instead of families of extensions of coherent sheaves. The statements of \cite{L} are true for every projective
morphism $f:X\rightarrow Y$. In the present work we have to restrict to the case when $f$ is the projection $\pi_S:C
\times S\rightarrow S$ for any scheme $S$ of finite type over $\mathbb{C}$ because we have to use \cite[proposition
1.13]{He}, that is not proved in full generality. It seems possible to prove results analogous to those of \cite{L} in
full generality, but this will require more work. Note that as in \cite{L} we need a flatness hypothesis on the
families we will use. Such an hypothesis is implicit in the definition of families of coherent systems, see remark
\ref{5}.\\

Almost all the results of this section are true even if the curve $C$ is replaced by any projective scheme, once we
enlarge the notion of coherent systems and families of such objects, see again remark \ref{5}.\\

In this section we will have to consider every family of coherent systems as a triple as in definition \ref{4}. Let
us first state the following preliminary result.

\begin{proposition}\label{17}
Let $(\mathcal{E}_l,\mathcal{V}_l,\phi_l)$  be two families of coherent systems over $C$, parametrized by a scheme $S$
for $l=1,2$. Let us fix also any $S$-scheme $u:S'\rightarrow S$. Then there exists a resolution $\Delta_{\bullet}
\rightarrow(\mathcal{E}_2,\mathcal{V}_2,\phi_2)$ such that:

\begin{enumerate}[(i)]
 \item $\Delta_0=(\mathcal{P}_0,0,0)\oplus(\pi_S^*\mathcal{V}_2,\mathcal{V}_2,\textrm{id}_{\pi_S^*\mathcal{V}_2})$
 \item $\Delta_j=(\mathcal{P}_j,0,0)$ for all $j\geq 1$;
 \item $\mathcal{P}_j$ is a locally free $\mathcal{O}_{C\times S}$-module for all $j\geq 0$;
 \item for all locally free $\mathcal{O}_{S'}$-modules $\mathcal{M}$, for all $j\geq 0$ and for all $i\geq 1$
    we have $\mathcal{E}xt^i_{\pi_{S'}}((u',u)^*\Delta_j,$ $(u',u)^*(\mathcal{E}_1,\mathcal{V}_1,\phi_1)\otimes_{S'}
   \mathcal{M})=0$;
 \item for all $j\geq 0$ the sheaf $\mathcal{L}^j:=\mathcal{H}om_{\pi_S}(\Delta_j,(\mathcal{E}_1,\mathcal{V}_1,
   \phi_1))$ is locally free on $S$.
\end{enumerate}

\end{proposition}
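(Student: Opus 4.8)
The strategy is to build the resolution $\Delta_\bullet$ in two stages, first dealing with the section space $\mathcal{V}_2$ and then resolving the underlying vector bundle $\mathcal{E}_2$ by free sheaves. For the zeroth term I would take the obvious ``tautological'' piece $(\pi_S^*\mathcal{V}_2,\mathcal{V}_2,\textrm{id}_{\pi_S^*\mathcal{V}_2})$, which already surjects (in the $\mathcal{V}$-slot) onto $\mathcal{V}_2$; the remaining task is to cover $\mathcal{E}_2$. Since $\mathcal{E}_2$ is a coherent sheaf on the quasi-projective scheme $C\times S$, one can find a locally free $\mathcal{O}_{C\times S}$-module $\mathcal{P}_0$ of the form $\pi_S^*\mathcal{N}_0\otimes q_C^*\mathcal{O}_C(-m)$ (with $m\gg 0$ and $\mathcal{N}_0$ locally free on $S$, or more simply a large direct sum of line bundles of sufficiently negative twist on the $C$-factor) admitting a surjection $\mathcal{P}_0\twoheadrightarrow\mathcal{E}_2$. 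Then $\Delta_0:=(\mathcal{P}_0,0,0)\oplus(\pi_S^*\mathcal{V}_2,\mathcal{V}_2,\textrm{id})$ maps onto $(\mathcal{E}_2,\mathcal{V}_2,\phi_2)$ via $(\phi_2\circ \textrm{pr}_1)\oplus \textrm{something}$ on the $\mathcal{E}$-slot and the identity on the $\mathcal{V}$-slot, and this map is visibly surjective as a morphism of families. Iterating on the kernel — whose $\mathcal{V}$-component is now zero, so all higher syzygies live in the subcategory of objects of the form $(\mathcal{P},0,0)$ — and choosing each $\mathcal{P}_j$ to again be a sufficiently negative twist, produces (i)--(iii).

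The point of the ``sufficiently negative twist'' freedom is precisely to force the vanishing in (iv) and the local freeness in (v). For (v): $\mathcal{H}om_{\pi_S}((\mathcal{P}_j,0,0),(\mathcal{E}_1,\mathcal{V}_1,\phi_1))$ is, by inspection of the definition of morphisms of coherent systems, just ${\pi_S}_*\mathcal{H}om_{\mathcal{O}_{C\times S}}(\mathcal{P}_j,\mathcal{E}_1)={\pi_S}_*(\mathcal{P}_j^\vee\otimes\mathcal{E}_1)$, while for $j=0$ the extra summand $(\pi_S^*\mathcal{V}_2,\mathcal{V}_2,\textrm{id})$ contributes $\mathcal{H}om_{\mathcal{O}_S}(\mathcal{V}_2,\mathcal{V}_1)$, which is locally free. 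If $\mathcal{P}_j$ is twisted negatively enough along $C$, then $\mathcal{P}_j^\vee\otimes\mathcal{E}_1$ restricted to each fibre $C_s$ has vanishing $\operatorname{H}^1$, so by cohomology and base change ${\pi_S}_*(\mathcal{P}_j^\vee\otimes\mathcal{E}_1)$ is locally free and commutes with base change; this gives (v). For (iv), the relative $\mathcal{E}xt^i_{\pi_{S'}}$ of $(u',u)^*\Delta_j=(u'^*\mathcal{P}_j,0,0)$ (for $j\geq 1$, and the free summand for $j=0$) against $(u',u)^*(\mathcal{E}_1,\mathcal{V}_1,\phi_1)\otimes_{S'}\mathcal{M}$ is computed, again via the structure of the $\operatorname{Hom}$ functor for coherent systems and the fact that $u'^*\mathcal{P}_j$ is locally free, by the relative $\operatorname{Ext}^i$ of a locally free sheaf on $C\times S'$ into a sheaf flat over $S'$; for a locally free first argument these reduce to $R^i{\pi_{S'}}_*$ of $\mathcal{H}om(u'^*\mathcal{P}_j,u'^*\mathcal{E}_1\otimes\pi_{S'}^*\mathcal{M})$, and the negative twist makes the fibrewise $\operatorname{H}^{\geq 1}$ vanish, hence all these relative $\mathcal{E}xt^i$ vanish for $i\geq 1$. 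The tautological summand $(\pi_S^*\mathcal{V}_2,\mathcal{V}_2,\textrm{id})$ is handled separately but is harmless: mapping out of it into a coherent system is controlled by $\mathcal{H}om_{\mathcal{O}_S}$-functors on the base, whose higher derived functors into a locally free sheaf vanish.

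The main obstacle, and the place where care is needed, is verifying that the relative $\operatorname{Hom}$ and $\mathcal{E}xt^i$ functors for \emph{coherent systems} (as opposed to for plain sheaves) really do decompose along the $\mathcal{E}$- and $\mathcal{V}$-components in the way suggested above — i.e. identifying $\mathcal{H}om_{\pi_S}((\mathcal{P},0,0),(\mathcal{E}_1,\mathcal{V}_1,\phi_1))$ and its derived functors explicitly — and that choosing a single twist works simultaneously for \emph{all} $S'$-schemes $u:S'\to S$ and all locally free $\mathcal{M}$ on $S'$. The first issue is resolved by unwinding definition \ref{4} and the definition of morphisms of families: a morphism $(\mathcal{P},0,0)\to(\mathcal{E}_1,\mathcal{V}_1,\phi_1)$ is simply a sheaf map $\mathcal{P}\to\mathcal{E}_1$ with no compatibility constraint (the $\mathcal{V}$-slot being zero), so the coherent-system $\mathcal{E}xt^i_{\pi_S}$ agrees with the sheaf-theoretic relative $\mathcal{E}xt^i_{\pi_S}(\mathcal{P},\mathcal{E}_1)$; one must quote \cite{He} or \cite[prop.\ A.9]{BGMMN} to know these derived functors exist and are computed as expected. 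The second issue is handled by the flatness of $\mathcal{E}_1$ over $S$ and the properness of $\pi_S$: the twist $m$ needed to kill $\operatorname{H}^{\geq 1}$ of $\mathcal{P}_j^\vee\otimes\mathcal{E}_1$ on every fibre $C_s$ depends only on boundedness of the family $\{(\mathcal{E}_1)_s\}_{s\in S}$, hence can be chosen uniformly; and any base change $u'^*$ of a sufficiently negatively twisted locally free sheaf stays sufficiently negatively twisted, while tensoring with $\pi_{S'}^*\mathcal{M}$ is pulled back from the base and so does not affect fibrewise cohomological vanishing along $C$. With these two points pinned down, assembling (i)--(v) is routine.
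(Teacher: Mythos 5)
Your proposal is correct and follows essentially the same route as the paper, whose proof simply combines the resolution construction of \cite[proposition 1.13]{He} (the tautological summand $(\pi_S^*\mathcal{V}_2,\mathcal{V}_2,\textrm{id})$ plus sufficiently negative locally free sheaves $(\mathcal{P}_j,0,0)$, with all higher syzygies having zero $\mathcal{V}$-component) with the uniform-negative-twist argument of \cite[lemma 1.1]{L} to get (iv) and (v) via fibrewise vanishing and cohomology and base change. You have in effect written out the content of those two cited proofs, including the key identification of $\mathcal{H}om_{\pi_S}((\mathcal{P},0,0),-)$ and of the contribution of the tautological summand, which is exactly where the paper defers to \cite{He}.
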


\begin{proof}
The proof consists simply in combining the proof of \cite[proposition 1.13]{He} with the proof of \cite[lemma 1.1]{L}.
Actually, point (iv) holds for every quasi-coherent $\mathcal{O}_{S'}$-module $\mathcal{M}$, once we suitably enlarge
the category of coherent systems in order to take into account also algebraic systems (see remark \ref{9}).
\end{proof}

\begin{defin}
In the notation of \cite{He}, a \emph{very negative resolution of }$(\mathcal{E}_2,\mathcal{V}_2,\phi_2)$
\emph{with respect to} $(\mathcal{E}_1,\mathcal{V}_1,\phi_1)$ is any resolution $\Delta_{\bullet}$ of
$(\mathcal{E}_2,\mathcal{V}_2,\phi_2)$ with properties (i), (ii), (iii) and:

\begin{enumerate}[(i)]
\setcounter{enumi}{3}
 \item\hspace{-2mm}' $\mathcal{E}xt^i_{\pi_S}(\Delta_j,(\mathcal{E}_1,\mathcal{V}_1,\phi_1))=0$ for all $j\geq 0$ and
  for all $i\geq 1$.
\end{enumerate}
\end{defin}

\begin{remark}\label{18}
The previous proposition proves that if we fix any morphism $u:S'\rightarrow S$ and any pair of families
$(\mathcal{E}_l,\mathcal{V}_l,\phi_l)$ for $l=1,2$, then $(u',u)^*\Delta_{\bullet}$ is a very negative resolution of
$(u',u)^*(\mathcal{E}_2,\mathcal{V}_2,\phi_2)$ with respect to $(u',u)^*(\mathcal{E}_1,\mathcal{V}_1,\phi_1)
\otimes_{S'}\mathcal{M}$ for all locally free $\mathcal{O}_{S'}$-modules $\mathcal{M}$. In particular, if we choose
$u=\textrm{id}_S$ and $\mathcal{M}=\mathcal{O}_S$, we get a very negative resolution of $(\mathcal{E}_2,\mathcal{V}_2,
\phi_2)$ with respect to $(\mathcal{E}_1,\mathcal{V}_1,\phi_1)$.
\end{remark}

We recall the following result, obtained from \cite[remarque 1.15]{He} together with remark  \ref{5}.

\begin{lemma}\label{19}
For every scheme $S$, for every pair of families $(\mathcal{E}_l,\mathcal{V}_l,\phi_l)$ of coherent systems
parametrized by $S$ for $l=1,2$, for every very negative resolution $\Delta_{\bullet}$ of $(\mathcal{E}_2,
\mathcal{V}_2,\phi_2)$ with respect to $(\mathcal{E}_1,\mathcal{V}_1,\phi_1)$ and for every $i\geq 0$ we have a
canonical isomorphism of sheaves over $S$:

$$\mathcal{E}xt^i_{\pi_S}\Big((\mathcal{E}_2,\mathcal{V}_2,\phi_2),(\mathcal{E}_1,\mathcal{V}_1,\phi_1)\Big)=
\mathcal{H}^i\Big(\mathcal{H}om_{\pi_S}\Big(\Delta_{\bullet},(\mathcal{E}_1,\mathcal{V}_1,\phi_1)\Big)\Big).$$
\end{lemma}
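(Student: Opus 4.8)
The plan is to prove this by the classical ``balancing of $\operatorname{Ext}$'' argument, realized through a double complex and carried out in the enlarged category of algebraic systems on $C\times S/S$ of \cite{He} (which contains families of coherent systems by remark \ref{5}, has enough injectives, and in which the sheaves $\mathcal{E}xt^i_{\pi_S}$ are defined). By construction $\mathcal{E}xt^i_{\pi_S}((\mathcal{E}_2,\mathcal{V}_2,\phi_2),-)$ is the $i$-th right derived functor of $\mathcal{H}om_{\pi_S}((\mathcal{E}_2,\mathcal{V}_2,\phi_2),-)$, so after choosing an injective resolution $(\mathcal{E}_1,\mathcal{V}_1,\phi_1)\hookrightarrow\mathcal{I}^{\bullet}$ in that category one has
\[\mathcal{E}xt^i_{\pi_S}\big((\mathcal{E}_2,\mathcal{V}_2,\phi_2),(\mathcal{E}_1,\mathcal{V}_1,\phi_1)\big)=\mathcal{H}^i\Big(\mathcal{H}om_{\pi_S}\big((\mathcal{E}_2,\mathcal{V}_2,\phi_2),\mathcal{I}^{\bullet}\big)\Big).\]
Reindexing the resolution $\Delta_{\bullet}\to(\mathcal{E}_2,\mathcal{V}_2,\phi_2)$ cohomologically, so that $\Delta_p$ sits in degree $p\geq 0$, I would form the first-quadrant double complex $K^{p,q}:=\mathcal{H}om_{\pi_S}(\Delta_p,\mathcal{I}^q)$ (horizontal differential induced by that of $\Delta_{\bullet}$, vertical one by that of $\mathcal{I}^{\bullet}$) and compare the two spectral sequences of its total complex.

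Running the spectral sequence that takes cohomology first in the $q$-direction, the $p$-th column computes $\mathcal{E}xt^q_{\pi_S}(\Delta_p,(\mathcal{E}_1,\mathcal{V}_1,\phi_1))$; by hypothesis (iv)$'$ --- the defining property of a very negative resolution of $(\mathcal{E}_2,\mathcal{V}_2,\phi_2)$ with respect to $(\mathcal{E}_1,\mathcal{V}_1,\phi_1)$ --- this vanishes for $q\geq 1$ and equals $\mathcal{L}^p=\mathcal{H}om_{\pi_S}(\Delta_p,(\mathcal{E}_1,\mathcal{V}_1,\phi_1))$ for $q=0$. Hence this spectral sequence collapses, and the total cohomology of $K^{\bullet,\bullet}$ is $\mathcal{H}^{\bullet}\big(\mathcal{H}om_{\pi_S}(\Delta_{\bullet},(\mathcal{E}_1,\mathcal{V}_1,\phi_1))\big)$, the right-hand side of the asserted identity.

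Running instead the spectral sequence that takes cohomology first in the $p$-direction, the $q$-th row is obtained by applying $\mathcal{H}om_{\pi_S}(-,\mathcal{I}^q)$ to the exact complex $\Delta_{\bullet}\to(\mathcal{E}_2,\mathcal{V}_2,\phi_2)$; since $\mathcal{I}^q$ is injective this (contravariant) functor is exact, so the $q$-th row has cohomology $\mathcal{H}om_{\pi_S}((\mathcal{E}_2,\mathcal{V}_2,\phi_2),\mathcal{I}^q)$ in degree $0$ and vanishes in positive degrees. This spectral sequence likewise collapses, and the total cohomology equals $\mathcal{H}^{\bullet}\big(\mathcal{H}om_{\pi_S}((\mathcal{E}_2,\mathcal{V}_2,\phi_2),\mathcal{I}^{\bullet})\big)=\mathcal{E}xt^{\bullet}_{\pi_S}\big((\mathcal{E}_2,\mathcal{V}_2,\phi_2),(\mathcal{E}_1,\mathcal{V}_1,\phi_1)\big)$. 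Comparing the two computations gives the isomorphism; it is canonical since both identifications are the edge morphisms of the spectral sequences of one and the same double complex, and it is independent of the chosen injective resolution, two such being chain-homotopy equivalent.

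The point needing genuine care is not this homological bookkeeping but the ambient category. Families of coherent systems parametrized by $S$ form only an exact, not an abelian, category, and the class is not stable under tensoring with an arbitrary $\mathcal{O}_S$-module (remark \ref{9}); so both $\mathcal{I}^{\bullet}$ and the double complex $K^{p,q}$ must be taken in the larger category of algebraic systems on $C\times S/S$, where one must know that there are enough injectives, that $\mathcal{H}om_{\pi_S}(-,\mathcal{I})$ is exact for $\mathcal{I}$ injective there, and that the $\mathcal{E}xt^i_{\pi_S}$ computed there restrict correctly. This is precisely what is packaged in \cite[remarque 1.15]{He}, so in practice the proof reduces to invoking that statement and using remark \ref{5} to pass between coherent systems on the smooth curve $C$ and the algebraic systems of \cite{He}.
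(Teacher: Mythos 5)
Your proposal is correct and ends up in the same place as the paper: the paper offers no independent proof of lemma \ref{19}, but simply recalls it from \cite[remarque 1.15]{He} combined with remark \ref{5}, which is exactly the reduction you make in your final paragraph. The double-complex balancing argument you spell out (collapse of one spectral sequence by property (iv)$'$ of the very negative resolution, of the other by injectivity of $\mathcal{I}^{\bullet}$ in the category of algebraic systems on $C\times S/S$) is the standard content behind that citation, and your identification of the ambient-category issues as the only delicate point is apt.
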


Now let us fix any $u:S'\rightarrow S$ and any $2$ families parametrized by $S$ as before; let $\Delta_{\bullet}$
and $\mathcal{L}^{\bullet}$ be as in proposition \ref{17}. Then we have an analogue of \cite[corollary 1.2 (ii)]{L}
as follows.

\begin{lemma}\label{20}
For all $S$-schemes $u: S'\rightarrow S$, for all locally free $\mathcal{O}_{S'}$-modules $\mathcal{M}$ and for all
$j\geq 0$ there is a canonical isomorphism of $\mathcal{O}_{S'}$-modules:

\begin{equation}\label{21}
\mathcal{H}om_{\pi_{S'}}\Big((u',u)^*\Delta_j,(u',u)^*(\mathcal{E}_1,\mathcal{V}_1,\phi_1)\otimes_{S'}\mathcal{M}\Big)
\simeq u^*\mathcal{L}^j\otimes_{S'}\mathcal{M}.
\end{equation}
\end{lemma}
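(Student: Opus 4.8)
The plan is to compute both sides of (\ref{21}) directly. The key observation is that, because of the very special shape of the terms $\Delta_j$, the relative $\operatorname{Hom}$-sheaf of coherent systems on the left-hand side reduces to an ordinary relative $\mathcal{H}om$-sheaf of $\mathcal{O}_{C\times S'}$-modules (plus, for $j=0$, a $\mathcal{H}om$-sheaf on $S'$); once this reduction is made, the statement follows from the projection formula together with cohomology and base change, in the same way as \cite[corollary 1.2]{L} is proved.

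First I would treat the terms with $j\geq 1$. Here $\Delta_j=(\mathcal{P}_j,0,0)$ with $\mathcal{P}_j$ locally free on $C\times S$, and a morphism of families of coherent systems $(\mathcal{P}_j,0,0)\to(\mathcal{G},\mathcal{W},\psi)$ is simply an $\mathcal{O}$-module map $\mathcal{P}_j\to\mathcal{G}$ (the compatibility square is automatic, since the source has vanishing $\mathcal{V}$-part). Hence, for any $S$-scheme $u\colon S'\to S$ and any locally free $\mathcal{O}_{S'}$-module $\mathcal{M}$, the left-hand side of (\ref{21}) is
$${\pi_{S'}}_*\big(u'^*\mathcal{P}_j^{\vee}\otimes u'^*\mathcal{E}_1\otimes\pi_{S'}^*\mathcal{M}\big)={\pi_{S'}}_*\big(u'^*(\mathcal{P}_j^{\vee}\otimes\mathcal{E}_1)\big)\otimes_{S'}\mathcal{M},$$
where the second equality is the projection formula (legitimate since $\mathcal{M}$ is locally free of finite rank). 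Now $\mathcal{P}_j^{\vee}\otimes\mathcal{E}_1$ is locally free on $C\times S$, hence flat over $S$, and Proposition \ref{17}(iv), read with $u=\mathrm{id}_S$ and $\mathcal{M}=\mathcal{O}_S$, gives $\mathcal{E}xt^i_{\pi_S}(\Delta_j,(\mathcal{E}_1,\mathcal{V}_1,\phi_1))=0$ for all $i\geq 1$; since $\mathcal{P}_j$ is locally free and $\Delta_j$ has no $\mathcal{V}$-part, this relative $\mathcal{E}xt^i$ is just $R^i{\pi_S}_*(\mathcal{P}_j^{\vee}\otimes\mathcal{E}_1)$, which therefore vanishes for $i\geq 1$. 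By the theorem on cohomology and base change there is then a canonical isomorphism $u^*{\pi_S}_*(\mathcal{P}_j^{\vee}\otimes\mathcal{E}_1)\simeq{\pi_{S'}}_*\big(u'^*(\mathcal{P}_j^{\vee}\otimes\mathcal{E}_1)\big)$, and since the same computation over $S$ identifies $\mathcal{L}^j={\pi_S}_*(\mathcal{P}_j^{\vee}\otimes\mathcal{E}_1)$, concatenating the isomorphisms gives (\ref{21}) for $j\geq 1$.

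The case $j=0$ requires in addition handling the summand $(\pi_S^*\mathcal{V}_2,\mathcal{V}_2,\mathrm{id})$ of $\Delta_0$. I would note that a morphism $(\pi_S^*\mathcal{V}_2,\mathcal{V}_2,\mathrm{id})\to(\mathcal{G},\mathcal{W},\psi)$ is a pair $(\gamma,\delta)$ subject to $\gamma=\psi\circ\pi_S^*\delta$, hence is uniquely determined by $\delta\colon\mathcal{V}_2\to\mathcal{W}$; therefore $\mathcal{H}om_{\pi_S}$ out of this summand into $(\mathcal{G},\mathcal{W},\psi)$ is canonically $\mathcal{H}om_S(\mathcal{V}_2,\mathcal{W})$. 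Pulling back along $u$ and using the canonical isomorphism $u'^*\pi_S^*\mathcal{V}_2\simeq\pi_{S'}^*u^*\mathcal{V}_2$ coming from diagram (\ref{8}), this summand contributes $\mathcal{H}om_{S'}\big(u^*\mathcal{V}_2,\,u^*\mathcal{V}_1\otimes_{S'}\mathcal{M}\big)$ to the left-hand side of (\ref{21}); since $\mathcal{V}_2$ is locally free, this equals $u^*\mathcal{H}om_S(\mathcal{V}_2,\mathcal{V}_1)\otimes_{S'}\mathcal{M}$. Adding this to the contribution of the $(\mathcal{P}_0,0,0)$-summand (treated exactly as above) and comparing with $\mathcal{L}^0=\mathcal{H}om_{\pi_S}(\Delta_0,(\mathcal{E}_1,\mathcal{V}_1,\phi_1))={\pi_S}_*(\mathcal{P}_0^{\vee}\otimes\mathcal{E}_1)\oplus\mathcal{H}om_S(\mathcal{V}_2,\mathcal{V}_1)$ yields (\ref{21}) for $j=0$.

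Every isomorphism used above --- the splitting of $\mathcal{H}om_{\pi_S}$ of coherent systems into its $\mathcal{E}$- and $\mathcal{V}$-parts, the projection formula, and the base-change morphism --- is canonical and functorial in $\mathcal{M}$ and in $u$, so the resulting identification (\ref{21}) is canonical, as asserted. The only genuinely non-formal ingredient is the cohomology-and-base-change step used for the terms with $j\geq 1$ (and for the $(\mathcal{P}_0,0,0)$-summand), and this is exactly where Proposition \ref{17}(iv) --- the ``very negative'' property of the resolution $\Delta_\bullet$ --- together with the flatness built into the definition of a family of coherent systems, enters; everything else is bookkeeping with the definitions of the pullback $(u',u)^*$ and of $-\otimes_{S'}\mathcal{M}$.
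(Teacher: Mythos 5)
Your proposal is correct and follows essentially the same route as the paper's proof: the same split into the cases $j\geq 1$ and $j=0$, the same reduction of the system-level $\mathcal{H}om_{\pi_{S'}}$ to the sheaf-level $\mathcal{H}om$ (and, for the summand $(\pi_S^*\mathcal{V}_2,\mathcal{V}_2,\mathrm{id})$ of $\Delta_0$, to $\mathcal{H}om_{\mathcal{O}_{S'}}(u^*\mathcal{V}_2,u^*\mathcal{V}_1\otimes_{S'}\mathcal{M})$ via the observation that $\gamma$ is determined by $\delta$), the projection formula, and a base-change isomorphism for ${\pi_S}_*(\mathcal{P}_j^\vee\otimes_{C\times S}\mathcal{E}_1)$. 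The only divergence is how that base change is justified: the paper uses Proposition \ref{17} (v) (local freeness of $\mathcal{L}^j$) together with \cite[III, prop.\ 12.5 and 12.11]{Ha}, whereas you derive it from Proposition \ref{17} (iv) through vanishing of higher direct images --- which does work, but your identification of $\mathcal{E}xt^i_{\pi_S}(\Delta_j,-)$ with $R^i{\pi_S}_*(\mathcal{P}_j^\vee\otimes\mathcal{E}_1)$ and the passage from vanishing of $R^i{\pi_S}_*$ to the fibrewise vanishing needed for the base-change theorem are additional (true but unproved) steps that the paper's argument avoids.
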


\begin{proof}
We have to consider two different cases depending on $j$.\\

\textbf{Case (i)} Let us suppose that $j\geq 1$. Then for all $V$ open in $S'$ we have:

\begin{eqnarray*}
& \mathcal{H}om_{\pi_{S'}}\Big((u',u)^*\Delta_j,(u',u)^*(\mathcal{E}_1,\mathcal{V}_1,\phi_1)\otimes_{S'}
   \mathcal{M}\Big)(V)=&\\
& =\operatorname{Hom}_V\Big((u'^*\mathcal{P}_j,0,0)|_V,(u'^*\mathcal{E}_1\otimes_{C\times S'}
   \pi_{S'}^*\mathcal{M},u^*\mathcal{V}_1\otimes_{S'}\mathcal{M},\widetilde{\phi}_1)|_V\Big)= &\\
& =\operatorname{Hom}_V\Big((u'^*\mathcal{P}_j|_{\pi_{S'}^{-1}(V)},0,0),
   ((u'^*\mathcal{E}_1\otimes_{C\times S'}\pi_{S'}^*\mathcal{M})|_{\pi_{S'}^{-1}V},
   u^*\mathcal{V}_1\otimes_{S'}\mathcal{M}|_V,\widetilde{\phi}_1|_V)\Big)=&\\
& =\mathcal{H}om_{\pi_{S'}}\Big(u'^*\mathcal{P}_j,u'^*\mathcal{E}_1\otimes_{C\times S'}
   \pi_{S'}^*\mathcal{M}\Big)(V).&
\end{eqnarray*}

Therefore, we have:

\begin{eqnarray}
\nonumber &\mathcal{H}om_{\pi_{S'}}\Big((u',u)^*\Delta_j,(u',u)^*(\mathcal{E}_1,\mathcal{V}_1,\phi_1)\otimes_{S'}
   \mathcal{M}\Big)=&\\
\nonumber & =\mathcal{H}om_{\pi_{S'}}\Big(u'^*\mathcal{P}_j,u'^*\mathcal{E}_1\otimes_{C\times S'} 
   \pi_{S'}^*\mathcal{M}\Big)= &\\
\nonumber &=(\pi_{S'})_*\mathcal{H}om_{\mathcal{O}_{C\times S'}}\Big(u'^*\mathcal{P}_j,u'^*\mathcal{E}_1
   \otimes_{C\times S'}\pi_{S'}^*\mathcal{M}\Big)= &\\
\nonumber & =(\pi_{S'})_*\Big(u'^*\mathcal{P}_j^\vee\otimes_{C\times S'}u'^*\mathcal{E}_1
   \otimes_{C\times S'}\pi_{S'}^*\mathcal{M}\Big)= &\\
\nonumber &=(\pi_{S'})_*\Big(u'^*(\mathcal{P}_j^\vee\otimes_{C\times S}\mathcal{E}_1)
   \otimes_{C\times S'}\pi_{S'}^*\mathcal{M}\Big)= &\\
\label{22} & =(\pi_{S'})_*\Big(u'^*(\mathcal{P}_j^\vee\otimes_{C\times S}\mathcal{E}_1)\Big)
   \otimes_{S'}\mathcal{M}. &
\end{eqnarray}

Here the third equality is proved using the fact that $u'^*\mathcal{P}_j$ is locally free because $\mathcal{P}_j$
is so by proposition \ref{17} (iii). Analogous computations (with $u$ replaced by $\textrm{id}_S$ and
$\mathcal{M}$ by $\mathcal{O}_S$) prove that for all $j\geq 1$:

$$\mathcal{L}^j=\mathcal{H}om_{\pi_S}\Big(\Delta_j,(\mathcal{E}_1,\mathcal{V}_1,\phi_1)\Big)=
(\pi_S)_*\left(\mathcal{P}_j^\vee\otimes_{C\times S}\mathcal{E}_1\right).$$

Now by proposition \ref{17}  (v) we have that $\mathcal{L}^j$ is locally free on $S$; therefore by base change
(\cite[III, prop. 12.11 and prop. 12.5]{Ha}) we have:

$$(\pi_{S'})_*u'^*(\mathcal{P}_j^\vee\otimes_{C\times S}\mathcal{E}_1)=
u^*{\pi_S}_*(\mathcal{P}_j^\vee\otimes_{C\times S}\mathcal{E}_1)=u^*\mathcal{L}^j.$$

Therefore, we have that (\ref{22}) is equal to $u^*\mathcal{L}^j\otimes_{S'}\mathcal{M}$. So we have proved that
(\ref{21}) is true for all $j\geq 1$.\\

\textbf{Case (ii)} Let us suppose that $j=0$; then we have that $\Delta_0=(\mathcal{P}_0,0,0)\oplus(\pi_S^*
\mathcal{V}_2,\mathcal{V}_2,\textrm{id})$. By the same idea used in the previous case, we have a canonical isomorphism:

\begin{eqnarray*}
& \mathcal{H}om_{\pi_{S'}}\Big((u',u)^*(\mathcal{P}_0,0,0),(u',u)^*(\mathcal{E}_1,\mathcal{V}_1,\phi_1)
  \otimes_{S'}\mathcal{M}\Big)\simeq &\\
& \simeq u^*\mathcal{H}om_{\pi_{S}}\Big((\mathcal{P}_0,0,0),(\mathcal{E}_1,\mathcal{V}_1,\phi_1)\Big)
  \otimes_{S'}\mathcal{M}. &
\end{eqnarray*}

Therefore, in order to prove that (\ref{21}) is still valid for $j=0$, it suffices to prove
that there is a canonical isomorphism:

\begin{eqnarray}
\nonumber  & \mathcal{H}om_{\pi_{S'}}\Big((u',u)^*(\pi_S^*\mathcal{V}_2,\mathcal{V}_2,\textrm{id}),
   (u',u)^*(\mathcal{E}_1,\mathcal{V}_1,\phi_1)\otimes_{S'}\mathcal{M}\Big)\stackrel{?}{\simeq} &\\
\label{23} & \stackrel{?}{\simeq}u^*\mathcal{H}om_{\pi_{S}}\Big((\pi_S^*\mathcal{V}_2,\mathcal{V}_2,
   \textrm{id}),(\mathcal{E}_1,\mathcal{V}_1,\phi_1)\Big)\otimes_{S'}\mathcal{M}. &
\end{eqnarray}

Now for every open set $V$ in $S'$ we have:

\begin{eqnarray}
\nonumber &\mathcal{H}om_{\pi_{S'}}\Big((u',u)^*(\pi_S^*\mathcal{V}_2,\mathcal{V}_2,\textrm{id}),
   (u',u)^*(\mathcal{E}_1,\mathcal{V}_1,\phi_1)\otimes_{S'}\mathcal{M}\Big)(V)= &\\
\label{24} &=\operatorname{Hom}_V\Big((u'^*\pi_S^*\mathcal{V}_2,u^*\mathcal{V}_2,\widetilde{\textrm{id}})|_V,(u'^*
   \mathcal{E}_1\otimes_{C\times S'}\pi_{S'}^*\mathcal{M},u^*\mathcal{V}_1\otimes_{S'}\mathcal{M},
   \widetilde{\phi}_1)|_V\Big) &
\end{eqnarray}
where $\widetilde{\textrm{id}}$ is given by the composition:

$$\widetilde{\textrm{id}}:\,\pi_{S'}^*u^*\mathcal{V}_2\stackrel{\eta}{\longrightarrow}u'^*\pi_S^*\mathcal{V}_2
\stackrel{u'^*(\textrm{id})}{\longrightarrow}u'^*\pi_S^*\mathcal{V}_2$$
and $\eta$ is the canonical isomorphism induced by $\pi_S\circ u'=u\circ\pi_{S'}$ (see diagram (\ref{8})). Therefore,
$\widetilde{\textrm{id}}=\eta$ is an isomorphism. Hence (\ref{24}) is the set of all pairs $(\gamma,\delta)$ of the
form:

\begin{eqnarray*}
& \gamma:u'^*\pi_S^*\mathcal{V}_2|_{\pi_{S'}^{-1}(V)}\longrightarrow (u'^*\mathcal{E}_1\otimes_{C\times S'}
   \pi_{S'}^*\mathcal{M})|_{\pi_{S'}^{-1}(V)}, &\\
& \delta:u^*\mathcal{V}_2|_V\longrightarrow (u^*\mathcal{V}_1\otimes_{S'}\mathcal{M})|_V &
\end{eqnarray*}
such that they make this diagram commute:

\[\begin{tikzpicture}[xscale=3.1,yscale=-1.2]
    \node (A0_0) at (2, 0) {$u'^*\pi_S^*\mathcal{V}_2|_{\pi_{S'}^{-1}(V)}$};
    \node (A0_2) at (2, 2) {$(u'^*\mathcal{E}_1\otimes_{C\times S'}\pi_{S'}^*\mathcal{M})|_{\pi_{S'}^{-1}(V)}.$};
    \node (A1_1) at (1, 1) {$\curvearrowright$};
    \node (A2_0) at (0, 0) {$\pi_{S'}^*u^*\mathcal{V}_2|_{\pi_{S'}^{-1}(V)}$};
    \node (A2_2) at (0, 2) {$\pi_{S'}^*(u^*\mathcal{V}_1\otimes_{S'}\mathcal{M})|_{\pi_{S'}^{-1}V}$};
    \path (A0_0) edge [->]node [auto] {$\scriptstyle{\gamma}$} (A0_2);
    \path (A2_0) edge [->]node [auto] {$\scriptstyle{\eta|_{\pi_{S'}^{-1}(V)}}$} node [rotate=180,sloped]
       {$\scriptstyle{\widetilde{\ \ \ }}$} (A0_0);
    \path (A2_2) edge [->,swap]node [auto] {$\scriptstyle{{\widetilde{\phi}_1}|_{\pi_{S'}^{-1}(V)}}$} (A0_2);
    \path (A2_0) edge [->,swap]node [auto] {$\scriptstyle{\pi_{S'}^*\delta}$} (A2_2);
\end{tikzpicture}\]

Therefore, $\gamma$ is completely determined as

$$\gamma={\widetilde{\phi}_1}|_{\pi_{S'}^{-1}(V)}\circ(\pi_{S'}^*\delta)\circ\left(\eta|_{\pi_{S'}^{-1}(V)}
\right)^{-1}.$$

So, having fixed $(\pi_S^*\mathcal{V}_2,\mathcal{V}_2,\textrm{id})$, $(\mathcal{E}_1,\mathcal{V}_1,\phi_1)$, $u:S'
\rightarrow S$ and $\mathcal{M}$, we have that (\ref{24}) is naturally identified with the set of all morphisms
$\delta$ as before, i.e. with the set

$$\operatorname{Hom}_{V}(u^*\mathcal{V}_2|_V,(u^*\mathcal{V}_1\otimes_{S'}\mathcal{M})|_V)=
\mathcal{H}om_{\mathcal{O}_{S'}}(u^*\mathcal{V}_2,u^*\mathcal{V}_1\otimes_{S'}\mathcal{M})(V).$$

Therefore, the left hand side of (\ref{23}) is given by:

\begin{eqnarray*}
& \mathcal{H}om_{\mathcal{O}_{S'}}(u^*\mathcal{V}_2,u^*\mathcal{V}_1\otimes_{S'}\mathcal{M})=
   u^*\mathcal{V}_2^\vee\otimes_{S'}u^*\mathcal{V}_1\otimes_{S'}\mathcal{M}= &\\
& =u^*(\mathcal{V}_2^\vee\otimes_{S}\mathcal{V}_1)\otimes_{S'}\mathcal{M}=
   u^*\mathcal{H}om_{\mathcal{O}_{S}}(\mathcal{V}_2,\mathcal{V}_1)\otimes_{S'}\mathcal{M}. &
\end{eqnarray*}

Here we used several times the fact that $\mathcal{V}_2$ is locally free on $S$. By using the same idea
we can prove that also the right hand side of (\ref{23}) is given by the same expression,
so we conclude.
\end{proof}

With the same ideas we can also prove the following result; we omit the proof since it is quite similar to the previous
one.

\begin{lemma}\label{25}
For all $S$-schemes $u:S'\rightarrow S$, for all locally free $\mathcal{O}_{S'}$-modules $\mathcal{M}$ and for all
$j\geq 0$ there is a canonical isomorphism of sheaves on $S'$:

$$\mathcal{H}om_{\pi_{S'}}\Big((u',u)^*\Delta_j\otimes_{S'}\mathcal{M}^{\vee},(u',u)^*(\mathcal{E}_1,\mathcal{V}_1,\phi_1)\Big)
\simeq u^*\mathcal{L}^j\otimes_{S'}\mathcal{M}.$$
\end{lemma}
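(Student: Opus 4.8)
The plan is to mimic the proof of Lemma \ref{20} almost verbatim, splitting into the two cases $j \geq 1$ and $j = 0$, the only difference being that the tensor factor $\mathcal{M}^\vee$ now sits on the source object $(u',u)^*\Delta_j$ rather than the target sitting with $\mathcal{M}$. First I would treat \textbf{Case $j \geq 1$}. Here $\Delta_j = (\mathcal{P}_j, 0, 0)$ with $\mathcal{P}_j$ locally free by proposition \ref{17}(iii), so $(u',u)^*\Delta_j \otimes_{S'} \mathcal{M}^\vee = (u'^*\mathcal{P}_j \otimes_{C\times S'} \pi_{S'}^*\mathcal{M}^\vee, 0, 0)$. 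As in Case (i) of Lemma \ref{20}, a local-sections computation identifies
$$\mathcal{H}om_{\pi_{S'}}\Big((u',u)^*\Delta_j \otimes_{S'}\mathcal{M}^\vee, (u',u)^*(\mathcal{E}_1,\mathcal{V}_1,\phi_1)\Big)$$
with $(\pi_{S'})_*\mathcal{H}om_{\mathcal{O}_{C\times S'}}\big(u'^*\mathcal{P}_j \otimes_{C\times S'}\pi_{S'}^*\mathcal{M}^\vee, u'^*\mathcal{E}_1\big)$, and then with $(\pi_{S'})_*\big(u'^*(\mathcal{P}_j^\vee \otimes_{C\times S}\mathcal{E}_1)\big) \otimes_{S'}\mathcal{M}$, using local freeness of $\mathcal{P}_j$ to dualize, the projection formula for $\pi_{S'}$, and compatibility of pullback with tensor products. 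Since $\mathcal{L}^j = (\pi_S)_*(\mathcal{P}_j^\vee \otimes_{C\times S}\mathcal{E}_1)$ is locally free on $S$ by proposition \ref{17}(v), cohomology and base change (\cite[III, prop. 12.11, prop. 12.5]{Ha}) gives $(\pi_{S'})_* u'^*(\mathcal{P}_j^\vee \otimes_{C\times S}\mathcal{E}_1) = u^*\mathcal{L}^j$, which yields the claim for $j \geq 1$.

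Next I would treat \textbf{Case $j = 0$}. Write $\Delta_0 = (\mathcal{P}_0, 0, 0) \oplus (\pi_S^*\mathcal{V}_2, \mathcal{V}_2, \mathrm{id})$; the $(\mathcal{P}_0,0,0)$ summand is handled exactly as in Case $j \geq 1$, so it remains to identify
$$\mathcal{H}om_{\pi_{S'}}\Big((u',u)^*(\pi_S^*\mathcal{V}_2,\mathcal{V}_2,\mathrm{id}) \otimes_{S'}\mathcal{M}^\vee, (u',u)^*(\mathcal{E}_1,\mathcal{V}_1,\phi_1)\Big)$$
with $u^*\mathcal{H}om_{\mathcal{O}_S}(\mathcal{V}_2,\mathcal{V}_1) \otimes_{S'}\mathcal{M}$. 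As in Case (ii) of Lemma \ref{20}, the pulled-back structure map $\widetilde{\mathrm{id}}$ tensored with $\mathrm{id}_{\pi_{S'}^*\mathcal{M}^\vee}$ is still an isomorphism (being the canonical iso $\eta$ of diagram (\ref{8}) tensored with an identity), so a morphism $(\gamma,\delta)$ of such coherent systems has $\gamma$ completely determined by $\delta$ via the commuting square. Hence the $\mathcal{H}om$-sheaf is canonically $\mathcal{H}om_{\mathcal{O}_{S'}}(u^*\mathcal{V}_2 \otimes_{S'}\mathcal{M}^\vee, u^*\mathcal{V}_1) = u^*\mathcal{V}_2^\vee \otimes_{S'} u^*\mathcal{V}_1 \otimes_{S'}\mathcal{M} = u^*(\mathcal{V}_2^\vee \otimes_S \mathcal{V}_1)\otimes_{S'}\mathcal{M} = u^*\mathcal{H}om_{\mathcal{O}_S}(\mathcal{V}_2,\mathcal{V}_1)\otimes_{S'}\mathcal{M}$, using local freeness of $\mathcal{V}_2$ and of $\mathcal{M}$. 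Combining the two summands gives (\ref{21}) with $\mathcal{M}^\vee$ on the source.

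I do not expect any genuine obstacle: this is the "dual" bookkeeping version of Lemma \ref{20} and every ingredient (local freeness of $\mathcal{P}_j$ and $\mathcal{V}_2$, the projection formula, cohomology and base change, the fact that $\widetilde{\mathrm{id}} = \eta$ is an isomorphism) is already in place. The one point requiring a little care — and the reason the statement is worth isolating — is making sure the tensor factor $\mathcal{M}^\vee$ can be moved past all the dualizations and pushforwards without sign or side conventions going wrong; concretely, when dualizing $u'^*\mathcal{P}_j \otimes \pi_{S'}^*\mathcal{M}^\vee$ one gets $u'^*\mathcal{P}_j^\vee \otimes \pi_{S'}^*\mathcal{M}$, and when transposing in the $\mathcal{V}$-component one passes from $\mathrm{Hom}(u^*\mathcal{V}_2 \otimes \mathcal{M}^\vee, u^*\mathcal{V}_1)$ to $u^*\mathcal{V}_2^\vee \otimes u^*\mathcal{V}_1 \otimes \mathcal{M}$, which is precisely the $\mathcal{M}$ (not $\mathcal{M}^\vee$) appearing on the right of the asserted isomorphism. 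Since the authors explicitly say the proof is "quite similar" to that of Lemma \ref{20}, I would simply record these two case computations and omit the fully expanded diagram chases.
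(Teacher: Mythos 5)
Your proposal is correct and is exactly the argument the paper has in mind: the paper omits the proof of Lemma \ref{25}, stating only that it follows by the same computations as Lemma \ref{20}, and your two-case reconstruction (moving $\mathcal{M}^\vee$ across the dualization of $\mathcal{P}_j$ for $j\geq 1$, and using that $\widetilde{\mathrm{id}}\otimes\mathrm{id}_{\pi_{S'}^*\mathcal{M}^\vee}$ is an isomorphism so that $\gamma$ is determined by $\delta$ in the $j=0$ summand) is precisely that adaptation. No gaps.
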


\begin{lemma}\label{26}
For every pair of families as before parametrized by $S$, for every morphism of schemes $u:S'\rightarrow S$, for every
locally free $\mathcal{O}_{S'}$-module $\mathcal{M}$ and for every $i\geq 0$ we have a canonical isomorphism of
sheaves over $S'$:

\begin{eqnarray*}
& \mathcal{E}xt^i_{\pi_{S'}}\Big((u',u)^*(\mathcal{E}_2,\mathcal{V}_2,\phi_2),(u',u)^*
   (\mathcal{E}_1,\mathcal{V}_1,\phi_1)\otimes_{S'}\mathcal{M}\Big)= &\\
& =\mathcal{H}^i\Big(\mathcal{H}om_{\pi_{S'}}\Big((u',u)^*\Delta_{\bullet},
    (u',u)^*(\mathcal{E}_1,\mathcal{V}_1,\phi_1)\otimes_{S'}\mathcal{M}\Big)\Big) &
\end{eqnarray*}
where $\Delta_{\bullet}$ is as in proposition \ref{17}.
\end{lemma}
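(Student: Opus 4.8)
The plan is to reduce this to Lemma~\ref{19} by showing that $(u',u)^*\Delta_{\bullet}$ is a very negative resolution of $(u',u)^*(\mathcal{E}_2,\mathcal{V}_2,\phi_2)$ with respect to $(u',u)^*(\mathcal{E}_1,\mathcal{V}_1,\phi_1)\otimes_{S'}\mathcal{M}$, and then invoking that lemma over the base $S'$ in place of $S$. First I would observe that $(u',u)^*$ is an exact functor on the relevant category: since every $\mathcal{P}_j$ is locally free on $C\times S$ by Proposition~\ref{17}(iii), the pullback $u'^*$ is exact on the complex $\Delta_{\bullet}$, so $(u',u)^*\Delta_{\bullet}\to(u',u)^*(\mathcal{E}_2,\mathcal{V}_2,\phi_2)$ is still a resolution. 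Moreover each $(u',u)^*\Delta_j$ has the shape required by properties (i), (ii), (iii): for $j\geq 1$ we have $(u',u)^*\Delta_j=(u'^*\mathcal{P}_j,0,0)$ with $u'^*\mathcal{P}_j$ locally free on $C\times S'$, and $(u',u)^*\Delta_0=(u'^*\mathcal{P}_0,0,0)\oplus(u'^*\pi_S^*\mathcal{V}_2,u^*\mathcal{V}_2,\widetilde{\textrm{id}})$, and using the canonical isomorphism $u'^*\pi_S^*\mathcal{V}_2\simeq\pi_{S'}^*u^*\mathcal{V}_2$ of diagram~(\ref{8}) this is isomorphic to $(u'^*\mathcal{P}_0,0,0)\oplus(\pi_{S'}^*u^*\mathcal{V}_2,u^*\mathcal{V}_2,\textrm{id})$, which is exactly the form prescribed for the $0$-th term of a very negative resolution of $(u',u)^*(\mathcal{E}_2,\mathcal{V}_2,\phi_2)$.

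Next I would verify property (iv)$'$, i.e. $\mathcal{E}xt^i_{\pi_{S'}}\big((u',u)^*\Delta_j,(u',u)^*(\mathcal{E}_1,\mathcal{V}_1,\phi_1)\otimes_{S'}\mathcal{M}\big)=0$ for all $j\geq 0$ and all $i\geq 1$. But this is precisely Proposition~\ref{17}(iv) applied with this $u$, this $\mathcal{M}$ and this $j$; there is nothing to prove. Having checked (i), (ii), (iii) and (iv)$'$, the resolution $(u',u)^*\Delta_{\bullet}$ is a very negative resolution of $(u',u)^*(\mathcal{E}_2,\mathcal{V}_2,\phi_2)$ with respect to the family $(u',u)^*(\mathcal{E}_1,\mathcal{V}_1,\phi_1)\otimes_{S'}\mathcal{M}$ (which is a genuine family of coherent systems parametrized by $S'$ since $\mathcal{M}$ is locally free). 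Then Lemma~\ref{19}, applied over the base $S'$ to the pair of families $(u',u)^*(\mathcal{E}_2,\mathcal{V}_2,\phi_2)$ and $(u',u)^*(\mathcal{E}_1,\mathcal{V}_1,\phi_1)\otimes_{S'}\mathcal{M}$ and to this very negative resolution, gives for every $i\geq 0$ the canonical isomorphism
$$\mathcal{E}xt^i_{\pi_{S'}}\Big((u',u)^*(\mathcal{E}_2,\mathcal{V}_2,\phi_2),(u',u)^*(\mathcal{E}_1,\mathcal{V}_1,\phi_1)\otimes_{S'}\mathcal{M}\Big)=\mathcal{H}^i\Big(\mathcal{H}om_{\pi_{S'}}\Big((u',u)^*\Delta_{\bullet},(u',u)^*(\mathcal{E}_1,\mathcal{V}_1,\phi_1)\otimes_{S'}\mathcal{M}\Big)\Big),$$
which is exactly the claim.

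The step requiring the most care is the identification of $(u',u)^*\Delta_0$ with an admissible $0$-th term of a very negative resolution of the \emph{pulled-back} family: one must check that under the canonical isomorphism $\eta\colon\pi_{S'}^*u^*\mathcal{V}_2\xrightarrow{\sim}u'^*\pi_S^*\mathcal{V}_2$ the summand $(u'^*\pi_S^*\mathcal{V}_2,u^*\mathcal{V}_2,\widetilde{\textrm{id}})$ with $\widetilde{\textrm{id}}=\eta$ (as computed in the proof of Lemma~\ref{20}) corresponds to $(\pi_{S'}^*u^*\mathcal{V}_2,u^*\mathcal{V}_2,\textrm{id})$, and that this isomorphism of $0$-th terms is compatible with the augmentation maps to $(u',u)^*(\mathcal{E}_2,\mathcal{V}_2,\phi_2)$. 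This is the only genuinely structural point; everything else is either a direct citation of Proposition~\ref{17} or the formal exactness of pullback along $u'$ applied to a complex of locally free sheaves. I would therefore present the proof in exactly this order: exactness of pullback on $\Delta_{\bullet}$, matching of each $(u',u)^*\Delta_j$ to the prescribed form (treating $j=0$ carefully as above), citation of Proposition~\ref{17}(iv) for property (iv)$'$, and finally the application of Lemma~\ref{19} over $S'$.
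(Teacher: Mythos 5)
Your proposal is correct and follows essentially the same route as the paper: the paper's proof simply cites remark \ref{18} (which records that $(u',u)^*\Delta_{\bullet}$ is a very negative resolution of $(u',u)^*(\mathcal{E}_2,\mathcal{V}_2,\phi_2)$ with respect to $(u',u)^*(\mathcal{E}_1,\mathcal{V}_1,\phi_1)\otimes_{S'}\mathcal{M}$, as a consequence of proposition \ref{17}) and then applies lemma \ref{19} over $S'$. You merely unfold the content of that remark (exactness of the pullback, the shape of the terms, and property (iv)$'$ from proposition \ref{17}(iv)) before invoking lemma \ref{19}, so the argument matches the paper's.
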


\begin{proof}
By remark \ref{18} we get that $(u',u)^*\Delta_{\bullet}$ is a very negative resolution of $(u',u)^*$ $(\mathcal{E}_2,
\mathcal{V}_2,\phi_2)$ with respect to $(u',u)^*(\mathcal{E}_1,\mathcal{V}_1,\phi_1)$ $\otimes_{S'}\mathcal{M}$ for all
locally free $\mathcal{O}_{S'}$-modules $\mathcal{M}$. Therefore we can use lemma \ref{19} for the families
$(u',u)^*(\mathcal{E}_2,\mathcal{V}_2,\phi_2)$ and $(u',u)^*$ $(\mathcal{E}_1,\mathcal{V}_1,\phi_1)$ $\otimes_{S'}
\mathcal{M}$ over $S'$ and we conclude.
\end{proof}

Now if we combine lemma \ref{26} with the canonical isomorphism of lemma \ref{20}, we get the following statement,
that is analogous to  \cite[cor. 1.2.iii]{L}.

\begin{lemma}
For every $i\geq 0$, for every morphism $u:S'\rightarrow S$ and for every locally free $\mathcal{O}_{S'}$-module
$\mathcal{M}$ we have a canonical isomorphism of sheaves over $S'$:

$$\mathcal{E}xt^i_{\pi_{S'}}\Big((u',u)^*(\mathcal{E}_2,\mathcal{V}_2,\phi_2),(u',u)^*(\mathcal{E}_1,\mathcal{V}_1,
\phi_1)\otimes_{S'}\mathcal{M}\Big)=\mathcal{H}^i(u^*\mathcal{L}^{\bullet}\otimes_{S'}\mathcal{M})$$
where $\mathcal{L}^{\bullet}$ is as in proposition \ref{17}. Since $\mathcal{L}^{\bullet}$ is a complex of locally
free sheaves on $S$ by that proposition, this implies that the sheaf on the left is coherent on $S'$.
\end{lemma}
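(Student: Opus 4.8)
The plan is to chain together the two results obtained just above, namely Lemma~\ref{26} and Lemma~\ref{20}. First I would invoke Lemma~\ref{26} for the chosen $u:S'\rightarrow S$ and locally free $\mathcal{O}_{S'}$-module $\mathcal{M}$: this already identifies the sheaf
$$\mathcal{E}xt^i_{\pi_{S'}}\Big((u',u)^*(\mathcal{E}_2,\mathcal{V}_2,\phi_2),(u',u)^*(\mathcal{E}_1,\mathcal{V}_1,\phi_1)\otimes_{S'}\mathcal{M}\Big)$$
with the $i$-th cohomology sheaf $\mathcal{H}^i$ of the complex $\mathcal{H}om_{\pi_{S'}}\big((u',u)^*\Delta_{\bullet},(u',u)^*(\mathcal{E}_1,\mathcal{V}_1,\phi_1)\otimes_{S'}\mathcal{M}\big)$, where $\Delta_{\bullet}$ is the very negative resolution furnished by Proposition~\ref{17}.

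Next, I would apply Lemma~\ref{20} term by term: for each $j\geq 0$ there is a canonical isomorphism of $\mathcal{O}_{S'}$-modules
$$\mathcal{H}om_{\pi_{S'}}\Big((u',u)^*\Delta_j,(u',u)^*(\mathcal{E}_1,\mathcal{V}_1,\phi_1)\otimes_{S'}\mathcal{M}\Big)\simeq u^*\mathcal{L}^j\otimes_{S'}\mathcal{M}.$$
The one point that needs a remark is that these isomorphisms are compatible with the differentials of the two complexes, so that they assemble into an isomorphism of complexes $\mathcal{H}om_{\pi_{S'}}\big((u',u)^*\Delta_{\bullet},(u',u)^*(\mathcal{E}_1,\mathcal{V}_1,\phi_1)\otimes_{S'}\mathcal{M}\big)\simeq u^*\mathcal{L}^{\bullet}\otimes_{S'}\mathcal{M}$; this is immediate from the functoriality/naturality built into the construction of the isomorphism in the proof of Lemma~\ref{20} (all the identifications used there — local-freeness of $\mathcal{P}_j$ and $\mathcal{V}_2$, projection formula, flat base change — are natural in the first argument), so passing to cohomology in degree $i$ gives
$$\mathcal{E}xt^i_{\pi_{S'}}\Big((u',u)^*(\mathcal{E}_2,\mathcal{V}_2,\phi_2),(u',u)^*(\mathcal{E}_1,\mathcal{V}_1,\phi_1)\otimes_{S'}\mathcal{M}\Big)=\mathcal{H}^i(u^*\mathcal{L}^{\bullet}\otimes_{S'}\mathcal{M}).$$

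For the final coherence assertion, I would note that by Proposition~\ref{17}(v) each $\mathcal{L}^j$ is locally free — in particular coherent — on $S$, hence $u^*\mathcal{L}^j$ is coherent on $S'$; tensoring with the locally free (hence coherent) sheaf $\mathcal{M}$ keeps it coherent, so $u^*\mathcal{L}^{\bullet}\otimes_{S'}\mathcal{M}$ is a complex of coherent $\mathcal{O}_{S'}$-modules, and since $S'$ is noetherian the cohomology sheaf $\mathcal{H}^i$ of such a complex is again coherent. This finishes the proof; the argument is essentially bookkeeping, and the only place requiring a little care is the compatibility of the term-wise isomorphisms of Lemma~\ref{20} with the complex structure, which is why I would phrase that lemma's isomorphism as canonical in the first place.
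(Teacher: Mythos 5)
Your proposal is correct and follows exactly the paper's route: the paper obtains this lemma precisely by combining lemma \ref{26} with the term-wise canonical isomorphisms of lemma \ref{20} applied to the resolution $\Delta_{\bullet}$ of proposition \ref{17}, then reading off coherence from the local freeness of the $\mathcal{L}^j$. Your added remark on the compatibility of the term-wise isomorphisms with the differentials is a point the paper leaves implicit, and it is justified by the canonicity of the isomorphism in lemma \ref{20}.
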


Now for every locally free $\mathcal{O}_S$-module $\mathcal{M}$ and for every $i\geq 0$, we define

$$\mathcal{T}^i(\mathcal{M}):=\mathcal{H}^i(\mathcal{L}^{\bullet}\otimes_S\mathcal{M})=
\mathcal{E}xt^i_{\pi_S}\Big((\mathcal{E}_2,\mathcal{V}_2,\phi_2),(\mathcal{E}_1,\mathcal{V}_1,\phi_1)\otimes_S
\mathcal{M}\Big),$$
where the last identity is given by the previous lemma with $u=id_S$. By \cite[III, proposition 12.5]{Ha} we get
natural homomorphisms for every $i\geq 0$:

$$\varphi(i,\mathcal{M}):\mathcal{T}^i(\mathcal{O}_S)\otimes_S\mathcal{M}\rightarrow\mathcal{T}^i(\mathcal{M}).$$

Moreover, for every morphism $u: S'\rightarrow S$ by using the same computation as
\cite[III, proposition 9.3 and remark 9.3.1]{Ha} we get the base change homomorphism:

$$\tau^i(u):u^*\mathcal{E}xt^i_{\pi_S}\Big((\mathcal{E}_2,\mathcal{V}_2,\phi_2),(\mathcal{E}_1,\mathcal{V}_1,\phi_1)
\Big)\rightarrow$$
$$\rightarrow\mathcal{E}xt^i_{\pi_{S'}}\Big((u',u)^*(\mathcal{E}_2,\mathcal{V}_2,\phi_2),(u',u)^*
(\mathcal{E}_1,\mathcal{V}_1,\phi_1)\Big).$$

In addition, by using again a resolution $\Delta_{\bullet}$ as in proposition \ref{17} together with
\cite[III, proposition 9.3]{Ha}, we get the following result.

\begin{proposition}
For every \emph{flat} morphism $u:S'\rightarrow S$ of schemes and for every $i\geq 0$, the base change
homomorphism $\tau^i(u)$ is an isomorphism.
\end{proposition}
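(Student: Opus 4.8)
The plan is to reduce the statement for families of coherent systems to the analogous statement for sheaves (i.e. for the projection $\pi_S:C\times S\to S$), which is classical (\cite[III, proposition 9.3]{Ha}), by using the very negative resolution $\Delta_\bullet$ produced by proposition \ref{17}. First I would fix a flat morphism $u:S'\to S$ and recall that, by the previous lemma applied with $\mathcal{M}=\mathcal{O}_{S'}$ and with $\mathcal{M}=\mathcal{O}_S$ respectively, we have canonical identifications
$$\mathcal{E}xt^i_{\pi_{S'}}\Big((u',u)^*(\mathcal{E}_2,\mathcal{V}_2,\phi_2),(u',u)^*(\mathcal{E}_1,\mathcal{V}_1,\phi_1)\Big)=\mathcal{H}^i(u^*\mathcal{L}^{\bullet})$$
and
$$\mathcal{E}xt^i_{\pi_{S}}\Big((\mathcal{E}_2,\mathcal{V}_2,\phi_2),(\mathcal{E}_1,\mathcal{V}_1,\phi_1)\Big)=\mathcal{H}^i(\mathcal{L}^{\bullet}),$$
where $\mathcal{L}^\bullet$ is the complex of locally free $\mathcal{O}_S$-modules from proposition \ref{17}(v). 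Thus both sides of the base change homomorphism $\tau^i(u)$ are expressed purely in terms of $\mathcal{L}^\bullet$: the source is $u^*\mathcal{H}^i(\mathcal{L}^\bullet)$ and the target is $\mathcal{H}^i(u^*\mathcal{L}^\bullet)$.

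Next I would identify $\tau^i(u)$ under these isomorphisms with the canonical comparison map $u^*\mathcal{H}^i(\mathcal{L}^\bullet)\to\mathcal{H}^i(u^*\mathcal{L}^\bullet)$. This is the point that needs care: one must check that the base change homomorphism constructed abstractly (following \cite[III, proposition 9.3 and remark 9.3.1]{Ha}) is compatible with passing to the explicit resolution $\Delta_\bullet$, i.e. that the identifications of lemma \ref{20} and lemma \ref{26} are natural in $u$ and intertwine $\tau^i(u)$ with the naive cohomology-of-pullback map. This follows by tracing through the functoriality in \cite{He} of the very negative resolutions together with remark \ref{18} (which guarantees $(u',u)^*\Delta_\bullet$ is again a very negative resolution), so that lemma \ref{26} applies at each stage and the diagram relating the two constructions commutes. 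I expect this compatibility verification to be the main obstacle, since it is essentially bookkeeping across several canonical isomorphisms rather than a single clean argument.

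Finally, once $\tau^i(u)$ is identified with $u^*\mathcal{H}^i(\mathcal{L}^\bullet)\to\mathcal{H}^i(u^*\mathcal{L}^\bullet)$, the result is immediate: since $\mathcal{L}^\bullet$ is a complex of locally free (hence flat) $\mathcal{O}_S$-modules and $u$ is flat, the functor $u^*$ is exact and commutes with taking cohomology of $\mathcal{L}^\bullet$. Therefore the comparison map is an isomorphism for every $i\geq 0$, which is exactly the assertion that $\tau^i(u)$ is an isomorphism. This step is purely formal homological algebra (flat base change for a bounded complex of flat modules) and requires no further input.
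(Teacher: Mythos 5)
Your proposal is correct and follows essentially the same route as the paper: the paper's argument is precisely to use the resolution $\Delta_{\bullet}$ of proposition \ref{17} (so that both sides of $\tau^i(u)$ are computed as cohomology of the locally free complex $\mathcal{L}^{\bullet}$, respectively of $u^*\mathcal{L}^{\bullet}$, via lemma \ref{26} and lemma \ref{20}) and then invoke flatness of $u$ as in \cite[III, proposition 9.3]{Ha} to commute $u^*$ with cohomology. The compatibility bookkeeping you flag is indeed the only content beyond that, and it is handled exactly as you describe, via remark \ref{18} and the naturality of the identifications.
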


This is exactly \cite[th\'eor\`eme 1.16 (i)]{He}, but with a more explicit construction of such an isomorphism,
that was not described in that work. Moreover, by proceeding as in \cite[III.12]{Ha} we get the following result.

\begin{proposition}\label{27}
\emph{(cohomology and base change for families of coherent systems)} Let $S$ be any scheme and let
$(\mathcal{E}_l,\mathcal{V}_l,\phi_l)$ be two families of coherent systems parametrized by $S$ for $l=1,2$. Let $s$ be
any point in $S$ and let us assume that the base change homomorphism 

$$\tau^i(s):\mathcal{E}xt^i_{\pi_S}\Big((\mathcal{E}_2,\mathcal{V}_2,\phi_2),(\mathcal{E}_1,\mathcal{V}_1,\phi_1)\Big)
\otimes k(s) \rightarrow \operatorname{Ext}^i\Big((\mathcal{E}_2,\mathcal{V}_2,\phi_2)_s,(\mathcal{E}_1,\mathcal{V}_1,
\phi_1)_s\Big)$$
is surjective. Then:

\begin{enumerate}[(i)]
 \item there is an open neighbourhood $U$ of $s$ in $S$ such that $\tau^i(s')$ is an isomorphism for all $s'$ in $U$;
 \item $\tau^{i-1}(s)$ is surjective if and only if $\mathcal{E}xt^i_{\pi_S}\Big((\mathcal{E}_2,\mathcal{V}_2,\phi_2),
   (\mathcal{E}_1,\mathcal{V}_1,\phi_1)\Big)$ is locally free in an open neighbourhood of $s$ in $S$.
\end{enumerate}
\end{proposition}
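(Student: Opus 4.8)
The plan is to mimic the classical proof of cohomology and base change from \cite[III.12]{Ha}, using the complex $\mathcal{L}^{\bullet}$ of locally free sheaves on $S$ produced by proposition \ref{17} as a replacement for the complex of modules that appears in the affine-local arguments there. The crucial technical input is the chain of identifications already established: for any $S$-scheme $u\colon S'\rightarrow S$ and any locally free $\mathcal{O}_{S'}$-module $\mathcal{M}$ one has
$$\mathcal{E}xt^i_{\pi_{S'}}\Big((u',u)^*(\mathcal{E}_2,\mathcal{V}_2,\phi_2),(u',u)^*(\mathcal{E}_1,\mathcal{V}_1,\phi_1)\otimes_{S'}\mathcal{M}\Big)=\mathcal{H}^i(u^*\mathcal{L}^{\bullet}\otimes_{S'}\mathcal{M}),$$
together with the natural maps $\varphi(i,\mathcal{M})$ and $\tau^i(u)$ constructed just above the statement. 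In particular, applying this with $u$ the inclusion of a point $s$ and $\mathcal{M}=k(s)$ shows that $\operatorname{Ext}^i((\mathcal{E}_2,\mathcal{V}_2,\phi_2)_s,(\mathcal{E}_1,\mathcal{V}_1,\phi_1)_s)=\mathcal{H}^i(\mathcal{L}^{\bullet}\otimes_S k(s))$, so the whole problem becomes a statement about the complex of locally free sheaves $\mathcal{L}^{\bullet}$ and its fibres — which is exactly the situation handled by the commutative-algebra lemmas underlying Grothendieck's theorem.

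First I would reduce to a local (affine) question: choose an affine open $\operatorname{Spec}A$ around $s$ on which every $\mathcal{L}^j$ is free, so that $\mathcal{L}^{\bullet}$ is represented by a bounded-below complex $L^{\bullet}$ of free $A$-modules with each $L^j$ of finite rank, and $s$ corresponds to a prime $\mathfrak{p}$ with residue field $k=k(s)$. Under the identifications above, $\tau^i(s)$ surjective means precisely that the natural map $H^i(L^{\bullet})\otimes_A k\rightarrow H^i(L^{\bullet}\otimes_A k)$ is surjective. Now I would invoke the standard homological algebra input — the exact form of \cite[III, Lemma 12.3]{Ha} and its corollaries, phrased for a complex of finite free modules — which says: if $H^i(L^{\bullet}\otimes_A k)\rightarrow$ (the base-changed cohomology) is surjective for a module $M$, then it is an isomorphism, and moreover $H^{i-1}(-)\otimes M\rightarrow H^{i-1}(-\otimes M)$ is surjective as well; and the local-freeness criterion for $H^i$ in terms of surjectivity of $\tau^{i-1}$. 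Semicontinuity of the function $s\mapsto\dim_{k(s)}H^i(L^{\bullet}\otimes_{A}k(s))$ — which we also have directly from proposition \ref{11} — then upgrades the pointwise isomorphism at $s$ to an isomorphism on a whole open neighbourhood, giving part (i). For part (ii), the equivalence ``$\tau^{i-1}(s)$ surjective $\iff$ $\mathcal{E}xt^i_{\pi_S}$ locally free near $s$'' is the direct translation of the corresponding statement for the complex $L^{\bullet}$: local freeness of $H^i(L^{\bullet})$ near $\mathfrak{p}$ is equivalent to the fibre dimension being locally constant, which by the lemma is equivalent to surjectivity of the base-change map in degree $i-1$.

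The step I expect to require the most care is making sure the formal properties used in \cite[III.12]{Ha} — exactness of the functors involved, commutation of the relevant $\operatorname{Hom}/\operatorname{Ext}$ with flat base change and with tensoring by locally free modules, compatibility of the maps $\varphi(i,\mathcal{M})$ and $\tau^i(u)$ with the identification via $\mathcal{L}^{\bullet}$ — genuinely hold in the category of families of coherent systems (equivalently, algebraic systems in the sense of \cite{He}). Most of this is already packaged: proposition \ref{17} supplies the very negative resolution and the local freeness and vanishing needed; lemma \ref{26} and the lemma following it give the cohomology-of-$\mathcal{L}^{\bullet}$ description functorially in $u$ and $\mathcal{M}$; and the construction of $\varphi(i,\mathcal{M})$ and $\tau^i(u)$ was done by literally copying \cite[III, props. 9.3 and 12.5]{Ha}. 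So the argument is essentially: transport the hypothesis through these identifications into a statement about $L^{\bullet}$, apply the classical lemma verbatim, and transport the conclusions back; the only real work is checking that the classical lemma's hypotheses (bounded complex of finite free modules over a noetherian ring) are met, which follows from proposition \ref{17}(iii) and (v) together with the standing finite-type assumption on $S$.
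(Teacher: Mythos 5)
Your proposal is correct and follows essentially the same route as the paper, which proves proposition \ref{27} precisely by transporting the question to the complex $\mathcal{L}^{\bullet}$ of locally free sheaves from proposition \ref{17} (via lemma \ref{26} and the identification $\mathcal{E}xt^i_{\pi_{S'}}(\cdot)=\mathcal{H}^i(u^*\mathcal{L}^{\bullet}\otimes_{S'}\mathcal{M})$) and then ``proceeding as in \cite[III.12]{Ha}''. Your affine reduction and appeal to the classical lemmas of Grothendieck's cohomology-and-base-change theorem is exactly the intended argument, with the only minor cosmetic difference being your optional use of proposition \ref{11} for the openness in part (i), which the Hartshorne-style argument also yields directly.
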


According to the usual definitions for coherent sheaves, if $\tau^i(s)$ is an isomorphism for all $s$ in $S$, then we
will say that $\mathcal{E}xt^i_{\pi_S}\Big((\mathcal{E}_2,\mathcal{V}_2,\phi_2),(\mathcal{E}_1,\mathcal{V}_1,\phi_1)
\Big)$  \emph{commutes with base change}. If this is the case, then $\tau^i(u)$ is an isomorphism for all morphisms
$u:S'\rightarrow S$ of schemes.

\begin{remark}
From now on, we will not need to refer explicitly to the maps of the form $\phi$, so in the following lemmas and
propositions we will use the notation of definition \ref{3} for families of coherent systems.
\end{remark}

Exactly as in \cite[lemma 4.1]{L}, we can prove the following consequence of lemmas \ref{20} and \ref{25}.

\begin{lemma}\label{28}
For every scheme $S$, for every pair of families $(\mathcal{E}_l,\mathcal{V}_l)$ parametrized by $S$ for $l=1,2$, for
every locally free $\mathcal{O}_S$-module $\mathcal{M}$ and for every $i\geq 0$, there are canonical isomorphisms

$$\mathcal{E}xt^i_{\pi_S}\Big((\mathcal{E}_2,\mathcal{V}_2),(\mathcal{E}_1,\mathcal{V}_1)\Big)\otimes_S\mathcal{M}
\simeq\mathcal{E}xt^i_{\pi_S}\Big((\mathcal{E}_2,\mathcal{V}_2),(\mathcal{E}_1,\mathcal{V}_1)\otimes_S\mathcal{M}
\Big)\simeq$$
$$\simeq\mathcal{E}xt^i_{\pi_S}\Big((\mathcal{E}_2,\mathcal{V}_2)\otimes_S\mathcal{M}^{\vee},(\mathcal{E}_1,\mathcal{V}_1)
\Big).$$
\end{lemma}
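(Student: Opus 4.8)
The plan is to reduce everything to the complex $\mathcal{L}^{\bullet}$ of proposition \ref{17} and then invoke the already-established identifications. First I would fix a very negative resolution $\Delta_{\bullet}\to(\mathcal{E}_2,\mathcal{V}_2)$ with respect to $(\mathcal{E}_1,\mathcal{V}_1)$ as in proposition \ref{17} and remark \ref{18}, and set $\mathcal{L}^j:=\mathcal{H}om_{\pi_S}(\Delta_j,(\mathcal{E}_1,\mathcal{V}_1))$, which is locally free on $S$ by that proposition. By remark \ref{18}, the same $\Delta_{\bullet}$ is also a very negative resolution of $(\mathcal{E}_2,\mathcal{V}_2)$ with respect to $(\mathcal{E}_1,\mathcal{V}_1)\otimes_S\mathcal{M}$ (take $u=\mathrm{id}_S$ in that remark), so lemma \ref{19} applies to all three pairs appearing in the statement. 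Concretely, for the middle term I would apply lemma \ref{19} with $(\mathcal{E}_1,\mathcal{V}_1)$ replaced by $(\mathcal{E}_1,\mathcal{V}_1)\otimes_S\mathcal{M}$, obtaining
$$\mathcal{E}xt^i_{\pi_S}\Big((\mathcal{E}_2,\mathcal{V}_2),(\mathcal{E}_1,\mathcal{V}_1)\otimes_S\mathcal{M}\Big)=\mathcal{H}^i\Big(\mathcal{H}om_{\pi_S}\big(\Delta_{\bullet},(\mathcal{E}_1,\mathcal{V}_1)\otimes_S\mathcal{M}\big)\Big),$$
and similarly for the right-hand term the resolution $\Delta_{\bullet}\otimes_S\mathcal{M}^{\vee}$ is very negative with respect to $(\mathcal{E}_1,\mathcal{V}_1)$ (again remark \ref{18}), giving $\mathcal{E}xt^i_{\pi_S}((\mathcal{E}_2,\mathcal{V}_2)\otimes_S\mathcal{M}^{\vee},(\mathcal{E}_1,\mathcal{V}_1))=\mathcal{H}^i(\mathcal{H}om_{\pi_S}(\Delta_{\bullet}\otimes_S\mathcal{M}^{\vee},(\mathcal{E}_1,\mathcal{V}_1)))$.

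Next I would identify the three relevant complexes of $\mathcal{O}_S$-modules. Applying lemma \ref{20} with $u=\mathrm{id}_S$ gives a canonical isomorphism $\mathcal{H}om_{\pi_S}(\Delta_j,(\mathcal{E}_1,\mathcal{V}_1)\otimes_S\mathcal{M})\simeq\mathcal{L}^j\otimes_S\mathcal{M}$ for every $j\geq 0$, and applying lemma \ref{25} with $u=\mathrm{id}_S$ gives $\mathcal{H}om_{\pi_S}(\Delta_j\otimes_S\mathcal{M}^{\vee},(\mathcal{E}_1,\mathcal{V}_1))\simeq\mathcal{L}^j\otimes_S\mathcal{M}$. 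Since these isomorphisms are canonical, they are compatible with the differentials of $\Delta_{\bullet}$, so they assemble into isomorphisms of complexes
$$\mathcal{H}om_{\pi_S}\big(\Delta_{\bullet},(\mathcal{E}_1,\mathcal{V}_1)\otimes_S\mathcal{M}\big)\simeq\mathcal{L}^{\bullet}\otimes_S\mathcal{M}\simeq\mathcal{H}om_{\pi_S}\big(\Delta_{\bullet}\otimes_S\mathcal{M}^{\vee},(\mathcal{E}_1,\mathcal{V}_1)\big).$$
Taking $\mathcal{H}^i$ of this chain and combining with the displayed identifications from the previous paragraph yields
$$\mathcal{E}xt^i_{\pi_S}\big((\mathcal{E}_2,\mathcal{V}_2),(\mathcal{E}_1,\mathcal{V}_1)\otimes_S\mathcal{M}\big)\simeq\mathcal{H}^i(\mathcal{L}^{\bullet}\otimes_S\mathcal{M})\simeq\mathcal{E}xt^i_{\pi_S}\big((\mathcal{E}_2,\mathcal{V}_2)\otimes_S\mathcal{M}^{\vee},(\mathcal{E}_1,\mathcal{V}_1)\big).$$

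It remains to insert $\mathcal{E}xt^i_{\pi_S}((\mathcal{E}_2,\mathcal{V}_2),(\mathcal{E}_1,\mathcal{V}_1))\otimes_S\mathcal{M}$ at the left end of the chain. Here I would use that $\mathcal{L}^{\bullet}$ is a bounded-above complex of locally free $\mathcal{O}_S$-modules and that $\mathcal{M}$ is locally free, hence flat: tensoring with a flat module is exact and commutes with taking cohomology, so $\mathcal{H}^i(\mathcal{L}^{\bullet}\otimes_S\mathcal{M})\simeq\mathcal{H}^i(\mathcal{L}^{\bullet})\otimes_S\mathcal{M}=\mathcal{E}xt^i_{\pi_S}((\mathcal{E}_2,\mathcal{V}_2),(\mathcal{E}_1,\mathcal{V}_1))\otimes_S\mathcal{M}$, the last equality by lemma \ref{19} with $u=\mathrm{id}_S$ and $\mathcal{M}=\mathcal{O}_S$. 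Stringing all the isomorphisms together gives the claim. The only mildly delicate point — the ``main obstacle'' such as it is — is checking that the canonical isomorphisms of lemmas \ref{20} and \ref{25} really are natural in the object $\Delta_j$, i.e. commute with the boundary maps $\Delta_{j+1}\to\Delta_j$, so that one genuinely obtains an isomorphism of complexes rather than just a termwise isomorphism; this follows by inspecting the construction in the proof of lemma \ref{20} (the identification is induced by the projection-formula and base-change isomorphisms of \cite[III]{Ha}, which are functorial), but it is the step that needs to be spelled out, exactly as in \cite[lemma 4.1]{L}.
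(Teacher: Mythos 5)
Your proposal is correct and follows essentially the route the paper intends: the paper gives no written proof but states that the lemma is a consequence of lemmas \ref{20} and \ref{25} ``exactly as in \cite[lemma 4.1]{L}'', i.e.\ precisely the computation you carry out with the very negative resolution $\Delta_{\bullet}$, the complex $\mathcal{L}^{\bullet}$, and flatness of the locally free sheaf $\mathcal{M}$ (indeed the unnumbered lemma after lemma \ref{26}, specialized to $u=\mathrm{id}_S$, already packages your first two steps). The only point to spell out a bit more carefully is that $\Delta_{\bullet}\otimes_S\mathcal{M}^{\vee}$ is again a very negative resolution of $(\mathcal{E}_2,\mathcal{V}_2)\otimes_S\mathcal{M}^{\vee}$ with respect to $(\mathcal{E}_1,\mathcal{V}_1)$ (remark \ref{18} as stated only covers pullbacks and twists of the target), but this is immediate since the $\mathcal{E}xt$-sheaves are local on $S$ and $\mathcal{M}^{\vee}$ is locally free, so it does not affect the argument.
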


\begin{lemma}\label{29}
Let us fix any scheme $S$, any pair of families $(\mathcal{E}_l,\mathcal{V}_l)$ parametrized by $S$ for $l=1,2$, any
$S$-scheme $u:S'\rightarrow S$ and any locally free $\mathcal{O}_{S'}$-module $\mathcal{M}$. Then there is a canonical
morphism:

\begin{eqnarray*}
& \mu: \operatorname{Ext}^1_{S'}\Big((u',u)^*(\mathcal{E}_2,\mathcal{V}_2),(u',u)^*(\mathcal{E}_1,\mathcal{V}_1)
   \otimes_{S'}\mathcal{M}\Big)\rightarrow &\\
& \rightarrow \operatorname{H}^0\Big(S',\mathcal{E}xt^1_{\pi_{S'}}\Big((u',u)^*(\mathcal{E}_2,
   \mathcal{V}_2),(u',u)^*(\mathcal{E}_1,\mathcal{V}_1)\Big)\otimes_{S'}\mathcal{M}\Big). &
\end{eqnarray*}

Let us assume that one of the following 2 conditions hold:

\begin{enumerate}[(a)]
 \item $\operatorname{Hom}((\mathcal{E}_2,\mathcal{V}_2)_s,(\mathcal{E}_1,\mathcal{V}_1)_s)=0$ for all $s$ in $S$;
 \item $S'$ is affine.
\end{enumerate}

Then $\mu$ is an isomorphism.
\end{lemma}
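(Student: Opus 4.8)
The plan is to obtain $\mu$ from the local-to-global spectral sequence relating $\mathcal{E}xt^{\bullet}_{\pi_{S'}}$ to $\operatorname{Ext}^{\bullet}_{S'}$, and then to read off when it is an isomorphism from the associated five-term exact sequence. Write $\mathcal{F}_2:=(u',u)^*(\mathcal{E}_2,\mathcal{V}_2)$ and $\mathcal{F}_1:=(u',u)^*(\mathcal{E}_1,\mathcal{V}_1)\otimes_{S'}\mathcal{M}$; both are families of coherent systems parametrized by $S'$. Applying \cite[prop. A.9]{BGMMN} to the morphism $\pi_{S'}$ and to the pair $(\mathcal{F}_2,\mathcal{F}_1)$ gives a spectral sequence
$$E_2^{p,q}=\operatorname{H}^p\big(S',\mathcal{E}xt^q_{\pi_{S'}}(\mathcal{F}_2,\mathcal{F}_1)\big)\ \Longrightarrow\ \operatorname{Ext}^{p+q}_{S'}(\mathcal{F}_2,\mathcal{F}_1),$$
whose five-term exact sequence reads
$$0\to\operatorname{H}^1\big(S',\mathcal{H}om_{\pi_{S'}}(\mathcal{F}_2,\mathcal{F}_1)\big)\to\operatorname{Ext}^1_{S'}(\mathcal{F}_2,\mathcal{F}_1)\to\operatorname{H}^0\big(S',\mathcal{E}xt^1_{\pi_{S'}}(\mathcal{F}_2,\mathcal{F}_1)\big)\to\operatorname{H}^2\big(S',\mathcal{H}om_{\pi_{S'}}(\mathcal{F}_2,\mathcal{F}_1)\big).$$
By lemma \ref{28} applied over $S'$ there is a canonical isomorphism $\mathcal{E}xt^1_{\pi_{S'}}(\mathcal{F}_2,\mathcal{F}_1)\simeq\mathcal{E}xt^1_{\pi_{S'}}\big((u',u)^*(\mathcal{E}_2,\mathcal{V}_2),(u',u)^*(\mathcal{E}_1,\mathcal{V}_1)\big)\otimes_{S'}\mathcal{M}$; composing the middle arrow above with $\operatorname{H}^0$ of this isomorphism defines the desired canonical $\mu$.

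Exactness of the five-term sequence gives $\ker\mu\simeq\operatorname{H}^1\big(S',\mathcal{H}om_{\pi_{S'}}(\mathcal{F}_2,\mathcal{F}_1)\big)$ and an injection $\operatorname{coker}\mu\hookrightarrow\operatorname{H}^2\big(S',\mathcal{H}om_{\pi_{S'}}(\mathcal{F}_2,\mathcal{F}_1)\big)$. Hence it suffices to prove that, under hypothesis (a) or (b), $\operatorname{H}^i\big(S',\mathcal{H}om_{\pi_{S'}}(\mathcal{F}_2,\mathcal{F}_1)\big)=0$ for $i=1,2$. Under (b) this is immediate: $\mathcal{H}om_{\pi_{S'}}(\mathcal{F}_2,\mathcal{F}_1)$ is a coherent (hence quasi-coherent) sheaf on the affine scheme $S'$ — coherence being the case $i=0$ of the coherence statement recorded just before the definition of $\mathcal{T}^i$ — and quasi-coherent sheaves on affine schemes have no higher cohomology.

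Under (a) I claim the stronger fact $\mathcal{H}om_{\pi_{S'}}(\mathcal{F}_2,\mathcal{F}_1)=0$. Since the fibers of a pullback family satisfy $(u',u)^*(\mathcal{E}_l,\mathcal{V}_l)_{s'}\simeq(\mathcal{E}_l,\mathcal{V}_l)_{u(s')}$, hypothesis (a) forces $\operatorname{Hom}\big((u',u)^*(\mathcal{E}_2,\mathcal{V}_2)_{s'},(u',u)^*(\mathcal{E}_1,\mathcal{V}_1)_{s'}\big)=0$ for every $s'\in S'$. For each such $s'$, the base change homomorphism $\tau^0(s')$ of proposition \ref{27}, applied over $S'$ to the families $(u',u)^*(\mathcal{E}_2,\mathcal{V}_2)$ and $(u',u)^*(\mathcal{E}_1,\mathcal{V}_1)$, is trivially surjective because its target is the zero group; hence by proposition \ref{27}(i) it is an isomorphism, so $\mathcal{H}om_{\pi_{S'}}\big((u',u)^*(\mathcal{E}_2,\mathcal{V}_2),(u',u)^*(\mathcal{E}_1,\mathcal{V}_1)\big)\otimes k(s')=0$. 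This sheaf being coherent, Nakayama's lemma shows that it vanishes; tensoring with $\mathcal{M}$ and invoking lemma \ref{28} once more yields $\mathcal{H}om_{\pi_{S'}}(\mathcal{F}_2,\mathcal{F}_1)=0$, so all of its cohomology vanishes and $\mu$ is an isomorphism.

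Apart from the formal construction of $\mu$, the delicate step is the vanishing argument in case (a): it combines the fiberwise triviality of $\operatorname{Hom}$ with cohomology and base change (proposition \ref{27}) and with the coherence of the relevant $\mathcal{E}xt$ sheaf over $S'$, and one must keep careful track of the identifications of lemma \ref{28} so that the target of $\mu$ is precisely the $\operatorname{H}^0$ group appearing in the statement. I expect this bookkeeping, rather than any serious difficulty, to be the main obstacle; everything else follows the treatment of the analogous statement for coherent sheaves in \cite{L}.
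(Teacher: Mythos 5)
Your proposal is correct and follows essentially the same route as the paper: the spectral sequence of \cite[prop.\ A.9]{BGMMN} for $\pi_{S'}$, the identification of lemma \ref{28} to pull $\mathcal{M}$ out of the $\mathcal{E}xt$ sheaves, and the five-term exact sequence, with the outer terms killed by affineness in case (b) and by the vanishing of $\mathcal{H}om_{\pi_{S'}}\big((u',u)^*(\mathcal{E}_2,\mathcal{V}_2),(u',u)^*(\mathcal{E}_1,\mathcal{V}_1)\big)$ in case (a). Your explicit fiberwise argument via $\tau^0(s')$, proposition \ref{27}(i) and Nakayama is just a spelled-out version of the paper's appeal to cohomology and base change, so there is nothing further to add.
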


\begin{proof}
We recall that by \cite[proposition A.9]{BGMMN}, there is a spectral sequence

\begin{eqnarray*}
& \operatorname{H}^p\Big(S',\mathcal{E}xt^q_{\pi_{S'}}\Big((u',u)^*(\mathcal{E}_2,\mathcal{V}_2),(u',u)^*
  (\mathcal{E}_1,\mathcal{V}_1)\otimes_{S'}\mathcal{M}\Big)\Big)\Rightarrow &\\
& \Rightarrow\operatorname{Ext}^{p+q}_{S'}\Big((u',u)^*(\mathcal{E}_2,\mathcal{V}_2),(u',u)^*(\mathcal{E}_1,
   \mathcal{V}_1)\otimes_{S'}\mathcal{M}\Big). &
\end{eqnarray*}

Moreover,  by lemma \ref{28} (over $S'$ instead of $S$) we have for every $q\geq 0$:

\begin{eqnarray*}
& \mathcal{E}xt^q_{\pi_{S'}}\Big((u',u)^*(\mathcal{E}_2,\mathcal{V}_2),(u',u)^*(\mathcal{E}_1,\mathcal{V}_1)
   \otimes_{S'}\mathcal{M}\Big)= &\\
& =\mathcal{E}xt^q_{\pi_{S'}}\Big((u',u)^*(\mathcal{E}_2,\mathcal{V}_2),(u',u)^*(\mathcal{E}_1,\mathcal{V}_1)
   \Big)\otimes_{S'}\mathcal{M}.
\end{eqnarray*}

So we get a long exact sequence:

\begin{eqnarray}
\nonumber & 0\rightarrow \operatorname{H}^1\Big(S',\mathcal{H}om_{\pi_{S'}}\Big((u',u)^*(\mathcal{E}_2,\mathcal{V}_2),
   (u',u)^*(\mathcal{E}_1,\mathcal{V}_1)\Big)\otimes_{S'}\mathcal{M}\Big)\rightarrow &\\
\nonumber & \rightarrow\operatorname{Ext}^1_{S'}\Big((u',u)^*(\mathcal{E}_2,\mathcal{V}_2),
   (u',u)^*(\mathcal{E}_1,\mathcal{V}_1)\otimes_{S'}\mathcal{M}\Big)\stackrel{\mu}{\rightarrow} &\\
\nonumber &\stackrel{\mu}{\rightarrow} \operatorname{H}^0\Big(S',\mathcal{E}xt^1_{\pi_{S'}}\Big((u',u)^*(\mathcal{E}_2,
   \mathcal{V}_2),(u',u)^*(\mathcal{E}_1,\mathcal{V}_1)\Big)\otimes_{S'}\mathcal{M}\Big)\rightarrow &\\
\label{30} & \rightarrow \operatorname{H}^2\Big(S',\mathcal{H}om_{\pi_{S'}}\Big((u',u)^*(\mathcal{E}_2,\mathcal{V}_2),
   (u',u)^*(\mathcal{E}_1,\mathcal{V}_1)\Big)\otimes_{S'}\mathcal{M}\Big)\rightarrow\cdots &
\end{eqnarray}

Now let us assume (a): this implies that the base change morphisms $\tau^0(s)$ are surjective for all $s$ in $S$.
Therefore, by cohomology and base change

$$\mathcal{H}om_{\pi_{S'}}((u',u)^*\Big(\mathcal{E}_2,\mathcal{V}_2),(u',u)^*(\mathcal{E}_1,\mathcal{V}_1)\Big)=0.$$

So by substituting in the previous long exact sequence we that $\mu$ is an isomorphism. If we assume (b) then both
the first and the last term of (\ref{30}) are zero, so we conclude as before.
\end{proof}

\section{Families of (classes of) extensions}

The following lemma is a direct consequence of the definition of the functors $\operatorname{Ext}^1_S(-,-)$'s and it is
already implicit in the proof of \cite[proposition A.9]{BGMN}. The lemma is also stated explicitly in
\cite[proposition 3.1]{BGMN} in the particular case when $S=\textrm{Spec}(\mathbb{C})$.

\begin{lemma}\label{32}
For all schemes $S$ and for all pairs of families $(\mathcal{E}_l,\mathcal{V}_l)$ parametrized by $S$ for $l=1,2$,
there is a canonical bijection from $\operatorname{Ext}^1_S((\mathcal{E}_2,\mathcal{V}_2), (\mathcal{E}_1,
\mathcal{V}_1))$ to the set of all short exact sequences

\begin{equation}\label{33}
0\longrightarrow(\mathcal{E}_1,\mathcal{V}_1)\longrightarrow(\mathcal{E},\mathcal{V})
\longrightarrow(\mathcal{E}_2,\mathcal{V}_2)\longrightarrow 0
\end{equation}
modulo equivalences.
\end{lemma}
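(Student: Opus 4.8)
The plan is to reduce the statement to the classical Yoneda-style description of $\operatorname{Ext}^1$ in the abelian category of families of coherent systems, using the very negative resolution supplied by Proposition \ref{17}. First I would fix a very negative resolution $\Delta_\bullet \to (\mathcal{E}_2,\mathcal{V}_2,\phi_2)$ with respect to $(\mathcal{E}_1,\mathcal{V}_1,\phi_1)$ as in Proposition \ref{17} (with $u = \operatorname{id}_S$, $\mathcal{M} = \mathcal{O}_S$), so that, by Lemma \ref{19}, $\operatorname{Ext}^1_S((\mathcal{E}_2,\mathcal{V}_2),(\mathcal{E}_1,\mathcal{V}_1))$ is computed as the first cohomology of the complex $\operatorname{Hom}_S(\Delta_\bullet, (\mathcal{E}_1,\mathcal{V}_1))$ (after taking global sections, or directly from the derived functor $\operatorname{Ext}^i_S$). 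Concretely a class in $\operatorname{Ext}^1_S$ is represented by a morphism $\psi \colon \Delta_1 \to (\mathcal{E}_1,\mathcal{V}_1)$ that kills the image of $\Delta_2 \to \Delta_1$, modulo morphisms factoring through $\Delta_0$.

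Next I would build the short exact sequence attached to such a $\psi$ by the standard pushout construction. Writing $K = \ker(\Delta_0 \to (\mathcal{E}_2,\mathcal{V}_2))$, so that $0 \to K \to \Delta_0 \to (\mathcal{E}_2,\mathcal{V}_2) \to 0$ is exact and $\Delta_1 \twoheadrightarrow K$, the cocycle $\psi$ descends to a morphism $\bar\psi \colon K \to (\mathcal{E}_1,\mathcal{V}_1)$ (this is exactly the condition that $\psi$ vanishes on the image of $\Delta_2$, using exactness at $\Delta_1$). Then $(\mathcal{E},\mathcal{V})$ is defined as the pushout of $\Delta_0 \leftarrow K \xrightarrow{\bar\psi} (\mathcal{E}_1,\mathcal{V}_1)$, which sits in $0 \to (\mathcal{E}_1,\mathcal{V}_1) \to (\mathcal{E},\mathcal{V}) \to (\mathcal{E}_2,\mathcal{V}_2) \to 0$. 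One checks that pushouts exist in the category of families of coherent systems: the underlying sheaf $\mathcal{E}$ is the pushout of vector bundles, which is again locally free of the right rank and degree on each fibre because the sequence is fibrewise exact (here one uses that $\Delta_0$ has the explicit shape in Proposition \ref{17}(i) and that everything is flat over $S$, so that forming the pushout commutes with restriction to $C_s$), and $\mathcal{V}$ is the corresponding pushout of locally free $\mathcal{O}_S$-modules, with the required injectivity of $\mathcal{V}_s \to \operatorname{H}^0(\mathcal{E}_s)$ following from the fibrewise exactness again. Conversely, given an extension (\ref{33}), pulling $\Delta_0 \to (\mathcal{E}_2,\mathcal{V}_2)$ back along $(\mathcal{E},\mathcal{V}) \to (\mathcal{E}_2,\mathcal{V}_2)$ produces a morphism $\Delta_0 \to (\mathcal{E},\mathcal{V})$ lifting the augmentation (this uses Proposition \ref{17}(iv)--(v), i.e. the vanishing of the relevant $\mathcal{E}xt^1$'s which kills the obstruction to lifting through the locally free term $\Delta_0$), and restricting to $\Delta_1 \to K \to (\mathcal{E},\mathcal{V})$, composed with the retraction supplied by the splitting of the pushout square over the kernel, yields a cocycle $\psi$. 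The two constructions are mutually inverse up to the equivalence relations (equivalence of extensions on one side, coboundaries on the other), which is the usual Yoneda bookkeeping.

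I would then note that, since this is really just the standard homological-algebra identification $\operatorname{Ext}^1 \cong \{\text{extensions}\}/\!\sim$ carried out inside the abelian category of families of coherent systems parametrized by $S$ (which is abelian by the discussion around \cite[corollaire 2.5.1]{KN} and remark \ref{5}, with enough injectives/acyclic resolutions to define the derived functors $\operatorname{Ext}^i_S$), one can alternatively simply invoke the general Yoneda lemma: in any abelian category with enough structure to define $\operatorname{Ext}^1$ as a derived functor of $\operatorname{Hom}$, $\operatorname{Ext}^1(B,A)$ is canonically in bijection with equivalence classes of extensions of $B$ by $A$. The only thing genuinely special to our setting is checking that the category of families of coherent systems over $S$ is closed under the operations (kernels, cokernels, pushouts) used in that argument, and that these operations are compatible with the flatness and fibrewise conditions built into Definition \ref{3}; this is where the explicit form of $\Delta_\bullet$ from Proposition \ref{17} and the base-change results of the previous section do the work.

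The main obstacle I anticipate is verifying that the middle term of the pushout is again a genuine \emph{family of coherent systems} in the sense of Definition \ref{3} --- i.e. that $\mathcal{E}$ is locally free with the correct fibrewise degree and that $\mathcal{V} \hookrightarrow {\pi_S}_*\mathcal{E}$ with $\mathcal{V}_s$ mapping injectively into $\operatorname{H}^0(\mathcal{E}_s)$ for every $s$. All of this reduces to the exactness of (\ref{33}) after restriction to each fibre $C_s$, which in turn follows because the resolution $\Delta_\bullet$ is built from locally free sheaves and the families are flat over $S$, so that $\operatorname{Tor}_1$-type obstructions vanish; the explicit shape of $\Delta_0$ in Proposition \ref{17}(i) makes the sheaf-level and section-level computations parallel. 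Everything else --- the well-definedness of the maps, independence of the chosen resolution, and compatibility with the equivalence relations --- is routine diagram chasing of the same kind already used in the proof of \cite[proposition A.9]{BGMN}, which the statement explicitly says the lemma is implicit in.
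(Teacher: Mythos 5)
Your fallback route is exactly the paper's proof: the paper disposes of Lemma \ref{32} in two lines by invoking the standard identification of $\operatorname{Ext}^1$ with equivalence classes of extensions in an abelian category with enough injectives, and then observing the single substantive point, namely that a short exact sequence in the ambient category (families of \emph{weak} coherent systems, i.e.\ algebraic systems on $C\times S/S$, cf.\ remark \ref{9}) whose outer terms are families of coherent systems has middle term again a family of coherent systems. You correctly isolate this as ``the main obstacle'', and your fibrewise argument for it (local splitting because the quotient terms $\mathcal{E}_2$ and $\mathcal{V}_2$ are locally free, hence exactness survives restriction to $C_s$, plus left exactness of $\operatorname{H}^0$ and a four-lemma chase for injectivity of $\mathcal{V}_s\to\operatorname{H}^0(\mathcal{E}_s)$) is sound and, note, needs nothing about $\Delta_\bullet$.

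Two points in your write-up are genuinely off, though. First, your primary route asserts that $\operatorname{Ext}^1_S((\mathcal{E}_2,\mathcal{V}_2),(\mathcal{E}_1,\mathcal{V}_1))$ is computed by the complex $\operatorname{Hom}_S(\Delta_\bullet,(\mathcal{E}_1,\mathcal{V}_1))$ ``by Lemma \ref{19}''. Lemma \ref{19} computes the \emph{relative} sheaves $\mathcal{E}xt^i_{\pi_S}$, not $\operatorname{Ext}^i_S$: the terms $\Delta_j$ are acyclic only for $\mathcal{H}om_{\pi_S}$, and by the spectral sequence of \cite[prop.\ A.9]{BGMMN} one has $\operatorname{Ext}^n_S(\Delta_j,(\mathcal{E}_1,\mathcal{V}_1))\simeq \operatorname{H}^n(S,\mathcal{L}^j)$, which need not vanish for $n\geq 1$ unless $S$ is affine. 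This is precisely why lemma \ref{29} requires hypothesis (a) or (b) and why proposition \ref{34} divides out $\operatorname{H}^1(S,\mathcal{H}om_{\pi_S}(-,-))$; so your cocycle-and-pushout construction, as written, identifies extensions with the wrong group on a general $S$ (it would rather lead to the statement of proposition \ref{35} after sheafifying). Second, the category of families of coherent systems in the sense of definitions \ref{3}/\ref{4} is \emph{not} abelian (cokernels destroy local freeness and the injectivity condition on fibres), and \cite[corollary 2.5.1]{KN} concerns $\alpha$-semistable coherent systems of fixed slope over a point, not families; the derived functors $\operatorname{Ext}^i_S$ live in the larger category of algebraic/weak systems, which is why the closure check above is not a formality but is the entire content of the lemma. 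With the Yoneda identification performed in that larger category and your closure check retained, your argument coincides with the paper's.
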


Here an extension (\ref{33}) is equivalent to an extension

$$0\longrightarrow(\mathcal{E}_1,\mathcal{V}_1)\longrightarrow(\mathcal{E}',\mathcal{V}')\longrightarrow
(\mathcal{E}_2,\mathcal{V}_2)\longrightarrow 0$$
if and only if there is an isomorphism $(\mathcal{E},\mathcal{V})\stackrel{\sim}{\rightarrow}(\mathcal{E}',
\mathcal{V}')$ making the following diagram commute

\[\begin{tikzpicture}[xscale=1.5,yscale=-1.2]
    \node (A0_0) at (0, 0) {$0$};
    \node (A0_2) at (2, 0) {$(\mathcal{E}_1,\mathcal{V}_1)$};
    \node (A0_4) at (4, 0) {$(\mathcal{E},\mathcal{V})$};
    \node (A0_6) at (6, 0) {$(\mathcal{E}_2,\mathcal{V}_2)$};
    \node (A0_8) at (8, 0) {$0$};
    \node (A1_3) at (3, 1) {$\curvearrowright$};
    \node (A2_0) at (0, 2) {$0$};
    \node (A1_5) at (5, 1) {$\curvearrowright$};
    \node (A2_2) at (2, 2) {$(\mathcal{E}_1,\mathcal{V}_1)$};
    \node (A2_4) at (4, 2) {$(\mathcal{E}',\mathcal{V}')$};
    \node (A2_6) at (6, 2) {$(\mathcal{E}_2,\mathcal{V}_2)$};
    \node (A2_8) at (8, 2) {$0.$};
    \path (A0_0) edge [->]node [auto] {$\scriptstyle{}$} (A0_2);
    \path (A0_4) edge [->]node [auto] {$\scriptstyle{}$} node [rotate=180,sloped]
       {$\scriptstyle{\widetilde{\ \ \ }}$} (A2_4);
    \path (A0_2) edge [-,double distance=1.5pt]node [auto] {$\scriptstyle{}$} (A2_2);
    \path (A0_6) edge [-,double distance=1.5pt]node [auto] {$\scriptstyle{}$} (A2_6);
    \path (A0_4) edge [->]node [auto] {$\scriptstyle{}$} (A0_6);
    \path (A2_2) edge [->]node [auto] {$\scriptstyle{}$} (A2_4);
    \path (A0_2) edge [->]node [auto] {$\scriptstyle{}$} (A0_4);
    \path (A2_6) edge [->]node [auto] {$\scriptstyle{}$} (A2_8);
    \path (A2_0) edge [->]node [auto] {$\scriptstyle{}$} (A2_2);
    \path (A2_4) edge [->]node [auto] {$\scriptstyle{}$} (A2_6);
    \path (A0_6) edge [->]node [auto] {$\scriptstyle{}$} (A0_8);
\end{tikzpicture}\]

\begin{proof}
This is a standard fact for an abelian category with enough injectives. It is therefore sufficient to observe that,
given a short exact sequence in the category of families of weak coherent systems on $C\times S/S$ for which the left
and right hand members are families of coherent systems parametrized by $S$, then the whole sequence belongs to the
category of coherent systems parametrized by $S$.
\end{proof}

Let us consider any scheme $S$ and any pair of families parametrized by $S$ as before and an extension like (\ref{33}).
For every point $s$ in $S$ the pullback of such an exact sequence to $C_s=C\times\{s\}$ gives rise to an extension:

$$0\longrightarrow(\mathcal{E}_1,\mathcal{V}_1)_s\longrightarrow(\mathcal{E},\mathcal{V})_s\longrightarrow
(\mathcal{E}_2,\mathcal{V}_2)_s\longrightarrow 0.$$

Therefore, by lemma \ref{32} we get a well defined linear map:

$$\Phi_s:\operatorname{Ext}^1_S\Big((\mathcal{E}_2,\mathcal{V}_2),(\mathcal{E}_1,\mathcal{V}_1)\Big)\rightarrow
\operatorname{Ext}^1\Big((\mathcal{E}_2,\mathcal{V}_2)_s,(\mathcal{E}_1,\mathcal{V}_1)_s\Big).$$

As in \cite{L}, we give the definition of family of extensions as follows:

\begin{defin}
A \emph{family of (classes of) extensions of }$(\mathcal{E}_2,\mathcal{V}_2)$\emph{ by }$(\mathcal{E}_1,\mathcal{V}_1)$
\emph{over }$S$ is any family 

$$\Big\{e_s\in\operatorname{Ext}^1\Big((\mathcal{E}_2,\mathcal{V}_2)_s,(\mathcal{E}_1,\mathcal{V}_1)_s\Big)
\Big\}_{s\in S}$$
such that there is an open covering $\mathfrak{U}=\{U_i\}_{i\in I}$ of $S$ and for each $i\in I$ there is an element
$\sigma_i$ in $\operatorname{Ext}^1_{U_i}\Big((\mathcal{E}_2,\mathcal{V}_2)|_{U_i},(\mathcal{E}_1,
\mathcal{V}_1)|_{U_i}\Big)$ such that $e_s=\Phi_{i,s}(\sigma_i)$ for every $s$ in $S$ and for every $i\in I$ such that
$s\in U_i$. Here $\Phi_{i,s}$ denotes the linear map

$$\Phi_{i,s}:\operatorname{Ext}^1_{U_i}\Big((\mathcal{E}_2,\mathcal{V}_2)|_{U_i},(\mathcal{E}_1,\mathcal{V}_1)|_{U_i}
\Big)\rightarrow\operatorname{Ext}^1\Big((\mathcal{E}_2,\mathcal{V}_2)_s,(\mathcal{E}_1,\mathcal{V}_1)_s\Big).$$

A family of extensions is called \emph{globally defined} if the covering $\mathfrak{U}$ can be chosen to coincide
with $\{S\}$.
\end{defin}

For every $s$ in $S$, let us define the canonical homomorphism 

$$\iota_s:\mathcal{E}xt^1_{\pi_S}\Big((\mathcal{E}_2,\mathcal{V}_2),(\mathcal{E}_1,\mathcal{V}_1)\Big)\rightarrow
\mathcal{E}xt^1_{\pi_S}\Big((\mathcal{E}_2,\mathcal{V}_2),(\mathcal{E}_1,\mathcal{V}_1)\Big)\otimes k(s).$$

Then we get a result analogous to that of \cite[lemma 2.1]{L}.

\begin{lemma}\label{31}
For every $s$ in $S$, the map $\Phi_s$ coincides with the composition:

\begin{eqnarray*}
& \operatorname{Ext}^1_S\Big((\mathcal{E}_2,\mathcal{V}_2),(\mathcal{E}_1,\mathcal{V}_1)\Big)
   \stackrel{\mu}{\longrightarrow}\operatorname{H}^0\Big(S,\mathcal{E}xt^1_{\pi_S}\Big((\mathcal{E}_2,\mathcal{V}_2),
   (\mathcal{E}_1,\mathcal{V}_1)\Big)\Big)\stackrel{\operatorname{H}^0(S,\iota_s)}{\longrightarrow} &\\
& \stackrel{\operatorname{H}^0(S,\iota_s)}{\longrightarrow} \operatorname{H}^0\Big(S,\mathcal{E}xt^1_{\pi_S}\Big(
   (\mathcal{E}_2,\mathcal{V}_2),(\mathcal{E}_2,\mathcal{V}_2)\Big)\otimes k(s)\Big)= &\\
& =\mathcal{E}xt^1_{\pi_S}\Big((\mathcal{E}_2,\mathcal{V}_2),(\mathcal{E}_1,\mathcal{V}_1)\Big)\otimes k(s)
   \stackrel{\tau^1(s)}{\longrightarrow} \operatorname{Ext}^1\Big((\mathcal{E}_2,\mathcal{V}_2)_s,(\mathcal{E}_1,
   \mathcal{V}_1)_s \Big), &
\end{eqnarray*}
where $\tau^1(s)$ is the base change homomorphism induced by the inclusion of $s$ in $S$ and $\mu$ is the map
described in lemma \ref{29} with $u=\textrm{id}_S$ and $\mathcal{M}=\mathcal{O}_S$ ($\mu$ is not necessarily an isomorphism
in this case).
\end{lemma}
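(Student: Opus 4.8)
The plan is to trace through the construction of $\Phi_s$ given in Lemma~\ref{32} and to identify it step by step with the displayed composition, using the explicit description of $\mu$ coming from the spectral sequence of \cite[proposition A.9]{BGMMN} together with the base change homomorphism $\tau^1(s)$. First I would recall how $\Phi_s$ is defined: an element $e \in \operatorname{Ext}^1_S((\mathcal{E}_2,\mathcal{V}_2),(\mathcal{E}_1,\mathcal{V}_1))$ corresponds by Lemma~\ref{32} to an equivalence class of short exact sequences (\ref{33}) over $S$; restricting such a sequence to the fibre $C_s$ and applying Lemma~\ref{32} over $\operatorname{Spec}(\mathbb{C})$ yields $\Phi_s(e)$. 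Equivalently, $\Phi_s$ is nothing but the map on $\operatorname{Ext}^1$ induced by the pullback functor $(j_s', j_s)^*$ for the inclusion $j_s : \{s\} \hookrightarrow S$, composed with the identification of $\operatorname{Ext}^1_{\{s\}}$ with $\operatorname{Ext}^1$ of the coherent systems on $C_s \simeq C$. So the statement is really the assertion that this fibrewise pullback map factors through the sheaf $\mathcal{E}xt^1_{\pi_S}$ in the indicated way.

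The key steps, in order, are: (1) Identify $\mu$ with the edge homomorphism $\operatorname{Ext}^1_S \to \operatorname{H}^0(S, \mathcal{E}xt^1_{\pi_S})$ of the local-to-global spectral sequence — this is exactly how $\mu$ was constructed in the proof of Lemma~\ref{29} (with $u = \operatorname{id}_S$, $\mathcal{M} = \mathcal{O}_S$), so there is nothing to prove here beyond recalling it. (2) Observe that the composite $\operatorname{H}^0(S,\iota_s)$ followed by the canonical identification $\operatorname{H}^0(S, \mathcal{E}xt^1_{\pi_S}(\cdots) \otimes k(s)) = \mathcal{E}xt^1_{\pi_S}(\cdots) \otimes k(s)$ is simply evaluation of a global section of the sheaf $\mathcal{E}xt^1_{\pi_S}$ at the point $s$, i.e. reduction modulo the maximal ideal $\mathfrak{m}_s$. (3) Identify the base change map $\tau^1(s)$ with the natural map that compares the stalk-at-$s$-modulo-$\mathfrak{m}_s$ of the relative $\mathcal{E}xt$ sheaf with the absolute $\operatorname{Ext}^1$ of the fibres; this was the homomorphism constructed just before Proposition~\ref{27} via the recipe of \cite[III, prop.\ 9.3 and rem.\ 9.3.1]{Ha} applied to a very negative resolution $\Delta_\bullet$. (4) Finally, chase an element $e$ (represented by a short exact sequence over $S$, or by a cocycle in $\mathcal{H}om_{\pi_S}(\Delta_\bullet, (\mathcal{E}_1,\mathcal{V}_1))$ using Lemma~\ref{19}) through both routes — on one side through literal fibrewise restriction, on the other side through $\mu$, evaluation at $s$, and $\tau^1(s)$ — and check they agree. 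The cleanest way to do step (4) is to use the very negative resolution $\Delta_\bullet$ throughout: both $\Phi_s$ and the composite can be described as ``reduce the cochain representing $e$ modulo $\mathfrak{m}_s$ and take cohomology'', once one knows (by Proposition~\ref{17}(v) and Lemma~\ref{20}) that $\mathcal{H}om_{\pi_S}(\Delta_j, (\mathcal{E}_1,\mathcal{V}_1))$ is locally free and commutes with the base change to $\{s\}$ in each fixed degree $j$.

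I expect the main obstacle to be the bookkeeping in step (4): matching the sign and normalization conventions between the description of $\Phi_s$ via short exact sequences (Lemma~\ref{32}, which is Yoneda-style) and the description via derived functors / spectral sequence edge maps, and checking that the two identifications of $\operatorname{Ext}^1$ on the fibre $C_s$ — one as equivalence classes of extensions, one as the first cohomology of $\mathcal{H}om(\Delta_\bullet|_{C_s}, (\mathcal{E}_1,\mathcal{V}_1)_s)$ — are the compatible ones. This compatibility is precisely the content of the fact, recorded in Lemma~\ref{32}'s proof, that short exact sequences in the category of weak coherent systems with coherent-system outer terms compute $\operatorname{Ext}^1$, and it is functorial in the obvious way under pullback; granting that functoriality, the diagram chase is formal. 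As in \cite[lemma 2.1]{L}, once all the identifications are in place the verification reduces to unwinding definitions, so I would present it by first setting up the resolution $\Delta_\bullet$, then writing $e$ as a class in $\mathcal{H}^1(\mathcal{H}om_{\pi_S}(\Delta_\bullet, (\mathcal{E}_1,\mathcal{V}_1)))$ via Lemma~\ref{19}, and finally observing that each of the four maps in the composition acts on such a representative exactly as the corresponding operation (global sections, reduction mod $\mathfrak{m}_s$, taking cohomology of the fibrewise complex) that together compute $\Phi_s$.
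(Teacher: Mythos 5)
The paper gives no proof of this lemma at all: it is stated as the analogue of \cite[lemma 2.1]{L} and the text passes straight on to the next definition, so there is no argument of the paper's to compare yours with. Your plan is the natural (Lange-style) unwinding of the definitions and is essentially what the author must have had in mind: $\mu$ is by construction the edge map in the five-term sequence (\ref{30}) with $u=\textrm{id}_S$, $\mathcal{M}=\mathcal{O}_S$; the middle two arrows are evaluation of a global section of $\mathcal{E}xt^1_{\pi_S}$ at $s$; $\tau^1(s)$ is the base-change map built from $\Delta_{\bullet}$; and Remark \ref{18} guarantees that $(j_s',j_s)^*\Delta_{\bullet}$ is again a very negative resolution on the fibre, which is exactly the input needed for your Yoneda-versus-resolution compatibility in step (4).

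One point in your write-up is stated too loosely and needs a small repair. An element $e\in\operatorname{Ext}^1_S\bigl((\mathcal{E}_2,\mathcal{V}_2),(\mathcal{E}_1,\mathcal{V}_1)\bigr)$ is \emph{not} in general represented by a global cocycle of the complex $\mathcal{L}^{\bullet}=\mathcal{H}om_{\pi_S}(\Delta_{\bullet},(\mathcal{E}_1,\mathcal{V}_1))$: Lemma \ref{19} identifies only the relative $\mathcal{E}xt$ sheaves, while $\operatorname{Ext}^1_S$ is the hypercohomology $\mathbb{H}^1(S,\mathcal{L}^{\bullet})$, and the kernel $\operatorname{H}^1\bigl(S,\mathcal{H}om_{\pi_S}(\cdots)\bigr)$ of $\mu$ in (\ref{30}) measures precisely the failure of such a representation. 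To make step (4) honest, either work with \v{C}ech hypercocycles, or -- more simply -- observe that both $\Phi_s$ and the displayed composition factor through restriction to any open neighbourhood of $s$ (naturality of $\mu$ under restriction, and the fact that evaluation at $s$ only sees the section near $s$), choose an affine $U\ni s$, and use Lemma \ref{29}(b) there: over $U$ every class is a genuine cocycle in $\Gamma(U,\mathcal{L}^1)$, reduction modulo $\mathfrak{m}_s$ computes $\tau^1(s)\circ\operatorname{H}^0(U,\iota_s)\circ\mu_U$ degreewise by Lemma \ref{20} and Proposition \ref{17}(v), and restriction of the representing extension to $C_s$ computes $\Phi_s$ on the same cocycle via the restricted resolution. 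With that localization inserted, your argument closes; without it, the phrase ``represented by a cocycle in $\mathcal{H}om_{\pi_S}(\Delta_{\bullet},(\mathcal{E}_1,\mathcal{V}_1))$'' is not available for a general base $S$.
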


\begin{defin}
Having fixed any pair of families $(\mathcal{E}_l,\mathcal{V}_l)$ parametrized by a scheme $S$ for $l=1,2$, we define

$$\operatorname{EXT}\Big((\mathcal{E}_2,\mathcal{V}_2),(\mathcal{E}_1,\mathcal{V}_1)\Big)$$
as the set of all the families of extensions between these two families of coherent systems; we consider also
the subset

$$\operatorname{EXT}_{glob}\Big((\mathcal{E}_2,\mathcal{V}_2),(\mathcal{E}_1,\mathcal{V}_1)\Big)$$
consisting of those families of extensions that are globally defined. Both sets have a natural structure of vector
space.
\end{defin}

\begin{proposition}\label{34}
Let us suppose that $S$ is \emph{reduced} and that $\mathcal{E}xt^1_{\pi_S}((\mathcal{E}_2,\mathcal{V}_2),
(\mathcal{E}_1,\mathcal{V}_1))$ commutes with base change. Then there is a canonical isomorphism between the space
$\operatorname{EXT}_{glob}((\mathcal{E}_2,\mathcal{V}_2),(\mathcal{E}_1,\mathcal{V}_1))$ and 

$$\operatorname{Ext}^1_S\Big((\mathcal{E}_2,\mathcal{V}_2),(\mathcal{E}_1,\mathcal{V}_1)\Big)/\operatorname{H}^1\Big(S,
\mathcal{H}om_{\pi_S}\Big((\mathcal{E}_2,\mathcal{V}_2),(\mathcal{E}_1,\mathcal{V}_1)\Big)\Big)\subseteq$$
$$\subseteq \operatorname{H}^0\Big(S,\mathcal{E}xt^1_{\pi_S}\Big((\mathcal{E}_2,\mathcal{V}_2),(\mathcal{E}_1,
\mathcal{V}_1)\Big)\Big).$$
\end{proposition}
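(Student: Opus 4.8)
The plan is to build the isomorphism out of the long exact sequence associated to the local-to-global spectral sequence (\ref{30}) with $u=\textrm{id}_S$ and $\mathcal{M}=\mathcal{O}_S$, and to identify $\operatorname{EXT}_{glob}$ with the image of the map $\mu$ of lemma \ref{29}. First I would recall that lemma \ref{31} expresses $\Phi_s$ as a composition whose first factor is $\mu$; hence for any global section $\sigma\in\operatorname{Ext}^1_S((\mathcal{E}_2,\mathcal{V}_2),(\mathcal{E}_1,\mathcal{V}_1))$ the associated family $\{\Phi_s(\sigma)\}_{s\in S}$ depends only on $\mu(\sigma)\in\operatorname{H}^0(S,\mathcal{E}xt^1_{\pi_S}((\mathcal{E}_2,\mathcal{V}_2),(\mathcal{E}_1,\mathcal{V}_1)))$, because the remaining factors $\operatorname{H}^0(S,\iota_s)$ and $\tau^1(s)$ are canonical and (using that the $\mathcal{E}xt^1$ commutes with base change) $\tau^1(s)$ is an isomorphism for every $s$. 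This gives a well-defined map from $\operatorname{im}(\mu)$ to $\operatorname{EXT}_{glob}$; by the long exact sequence (\ref{30}), $\operatorname{im}(\mu)\cong\operatorname{Ext}^1_S/\operatorname{H}^1(S,\mathcal{H}om_{\pi_S}(\cdots))$, which is exactly the quotient in the statement, and it sits inside $\operatorname{H}^0(S,\mathcal{E}xt^1_{\pi_S}(\cdots))$ since it is the kernel of the next map in (\ref{30}).

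Next I would check that this map is surjective onto $\operatorname{EXT}_{glob}$. Given a globally defined family of extensions, by definition there is a single $\sigma\in\operatorname{Ext}^1_S((\mathcal{E}_2,\mathcal{V}_2),(\mathcal{E}_1,\mathcal{V}_1))$ with $e_s=\Phi_s(\sigma)$ for all $s$; then $\mu(\sigma)$ is a preimage, so surjectivity is immediate from the definition. The substance is injectivity: I must show that if $\mu(\sigma)$ and $\mu(\sigma')$ give the same family of classes $\{e_s\}_{s\in S}$ then $\mu(\sigma)=\mu(\sigma')$. Equivalently, writing $t:=\mu(\sigma)-\mu(\sigma')\in\operatorname{H}^0(S,\mathcal{E}xt^1_{\pi_S}(\cdots))$, I must show that if $\operatorname{H}^0(S,\iota_s)(t)=0$ for all $s$ — i.e. the section $t$ has zero image in every fiber $\mathcal{E}xt^1_{\pi_S}(\cdots)\otimes k(s)$ — then $t=0$. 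This is precisely where the hypothesis that $S$ is \emph{reduced} enters: a global section of a coherent sheaf on a reduced scheme that vanishes at every point is zero. (The fact that $\tau^1(s)$ is an isomorphism for all $s$, from the base-change hypothesis, is what lets me pass from "$e_s=e_s'$" to "$\iota_s(t)=0$" in the first place; without it we would only get that $\iota_s(t)$ lies in $\ker\tau^1(s)$.)

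Finally I would record that the map constructed is linear — this is clear since $\mu$, $\operatorname{H}^0(S,\iota_s)$ and $\tau^1(s)$ are all linear and the vector space structure on $\operatorname{EXT}_{glob}$ is defined fiberwise — and that it is canonical, being assembled entirely from the canonical maps $\mu$, $\iota_s$, $\tau^1(s)$ and the identification coming from the spectral sequence of \cite[proposition A.9]{BGMMN}; none of these involved any choice. Assembling the well-definedness, surjectivity, injectivity and linearity then yields the claimed canonical isomorphism, together with the asserted inclusion into $\operatorname{H}^0(S,\mathcal{E}xt^1_{\pi_S}((\mathcal{E}_2,\mathcal{V}_2),(\mathcal{E}_1,\mathcal{V}_1)))$.

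The main obstacle I anticipate is the injectivity step, and more precisely pinning down the exact role of reducedness: one has to argue carefully that "the family of classes $\{e_s\}$ is determined by the global section $\mu(\sigma)$" combined with "$\mu(\sigma)$ is determined, as a global section, by its values at all points" — the first is formal from lemma \ref{31} plus base change, but the second genuinely needs $S$ reduced and needs the target sheaf $\mathcal{E}xt^1_{\pi_S}(\cdots)$ to be coherent (which we know, since $\mathcal{L}^{\bullet}$ is a bounded complex of locally free sheaves, hence its cohomology is coherent). A secondary subtlety is checking that the quotient $\operatorname{Ext}^1_S/\operatorname{H}^1(S,\mathcal{H}om_{\pi_S}(\cdots))$ genuinely coincides with $\operatorname{im}(\mu)$ as a submodule of $\operatorname{H}^0(S,\mathcal{E}xt^1_{\pi_S}(\cdots))$, which is just exactness of (\ref{30}) but should be stated explicitly.
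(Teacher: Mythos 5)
Your proposal is correct and follows essentially the same route as the paper's own proof: lemma \ref{31} together with the long exact sequence (\ref{30}) (for $u=\mathrm{id}_S$, $\mathcal{M}=\mathcal{O}_S$) identifies $\operatorname{EXT}_{glob}$ with $\operatorname{im}(\mu)\simeq\operatorname{Ext}^1_S/\operatorname{H}^1(S,\mathcal{H}om_{\pi_S}(\cdot,\cdot))$ inside $\operatorname{H}^0(S,\mathcal{E}xt^1_{\pi_S}(\cdot,\cdot))$, surjectivity is definitional, and injectivity uses that each $\tau^1(s)$ is an isomorphism plus the vanishing-at-every-point argument on the reduced base. One small caveat: your blanket statement that a section of a coherent sheaf on a reduced scheme vanishing at every point must be zero is too strong in general (e.g. $\mathcal{O}_{\mathbb{A}^1}/(x^2)$ with the section $x$), but this is precisely the step the paper itself dispatches with ``Nakayama plus reducedness'', and it causes no trouble where the proposition is actually used, since there base change for $i=0,1$ makes $\mathcal{E}xt^1_{\pi_S}$ locally free.
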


\begin{proof}
For every class of extensions $\sigma\in\operatorname{Ext}^1_S((\mathcal{E}_2,\mathcal{V}_2),(\mathcal{E}_1,
\mathcal{V}_1))$, by lemma \ref{31} the family

$$\{\Phi_s(\sigma)=(\tau^1(s)\circ \operatorname{H}^0(S,\iota_s)\circ\mu)(\sigma)\}_{s\in S}$$
is a globally defined family of extensions of $(\mathcal{E}_2,\mathcal{V}_2)$ by $(\mathcal{E}_1,\mathcal{V}_1)$ over
$S$. Let us consider the exact sequence (\ref{30}) with $u=\textrm{id}_S$ and $\mathcal{M}=\mathcal{O}_S$ and let
us denote by $\overline{\mu}$ the morphism induced by $\mu$:

$$H:=\operatorname{Ext}^1_S\Big((\mathcal{E}_2,\mathcal{V}_2),(\mathcal{E}_1,\mathcal{V}_1)\Big)/\operatorname{H}^1
\Big(S,\mathcal{H}om_{\pi_S}\Big((\mathcal{E}_2,\mathcal{V}_2),(\mathcal{E}_1,\mathcal{V}_1)\Big)\Big)
\stackrel{\overline{\mu}}{\rightarrow}$$
$$\stackrel{\overline{\mu}}{\rightarrow}
\operatorname{H}^0\Big(S,\mathcal{E}xt^1_{\pi_S}\Big((\mathcal{E}_2,\mathcal{V}_2),(\mathcal{E}_1,\mathcal{V}_1)\Big)
\Big).$$

Now let us consider the set map $f$ defined from $H$ to $\operatorname{EXT}_{glob}$ as follows: for every class
$[\sigma]$ in $H$ we associate to it the family

$$f([\sigma]):=\{(\tau^1(s)\circ \operatorname{H}^0(S,\iota_s)\circ\bar{\mu})([\sigma])\}_{s\in S}=\{\Phi_s(\sigma)
\}_{s\in S}.$$

Now $\overline{\mu}$ is injective by construction and by (\ref{30}). Moreover the family $\{\iota_s\}_{s\in S}$ is
injective by using Nakayama's lemma and the fact that $S$ is reduced by hypothesis. So also the family
$\{\operatorname{H}^0(S,\iota_s)\}_{s\in S}$ is injective. In addition, every $\tau^1(s)$ is an isomorphism by hypothesis
(base change for $i=1$), so in particular it is injective. Therefore the set map $f$ is injective. Moreover, $f$ is
surjective by definition of globally defined family and by lemma \ref{31}. Finally, this map is clearly linear, so we
get the desired isomorphism.
\end{proof}

\begin{proposition}\label{35}
Let us assume the same hypotheses as for proposition \ref{34}. Then there is a canonical isomorphism

$$\operatorname{EXT}\Big((\mathcal{E}_2,\mathcal{V}_2),(\mathcal{E}_1,\mathcal{V}_1)\Big)\simeq\operatorname{H}^0
\Big(S,\mathcal{E}xt^1_{\pi_S}\Big((\mathcal{E}_2,\mathcal{V}_2),(\mathcal{E}_1,\mathcal{V}_1)\Big)\Big).$$
\end{proposition}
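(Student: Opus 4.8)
The plan is to reduce Proposition \ref{35} to Proposition \ref{34} by showing that, under the stated hypotheses, every family of extensions is automatically globally defined. Recall that a family of extensions $\{e_s\}_{s\in S}$ comes with an open cover $\mathfrak{U}=\{U_i\}_{i\in I}$ and local classes $\sigma_i\in\operatorname{Ext}^1_{U_i}((\mathcal{E}_2,\mathcal{V}_2)|_{U_i},(\mathcal{E}_1,\mathcal{V}_1)|_{U_i})$ with $e_s=\Phi_{i,s}(\sigma_i)$ for $s\in U_i$. By Lemma \ref{31} applied over each $U_i$, we have $\Phi_{i,s}=\tau^1(s)\circ \operatorname{H}^0(U_i,\iota_s)\circ\mu_{U_i}$. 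So passing from $\sigma_i$ to its image $\mu_{U_i}(\sigma_i)\in \operatorname{H}^0(U_i,\mathcal{E}xt^1_{\pi_{U_i}}((\mathcal{E}_2,\mathcal{V}_2)|_{U_i},(\mathcal{E}_1,\mathcal{V}_1)|_{U_i}))$ loses no information as far as the family $\{e_s\}$ is concerned, because (exactly as in the proof of Proposition \ref{34}) the maps $\tau^1(s)$ are isomorphisms by the base-change hypothesis and the $\operatorname{H}^0(U_i,\iota_s)$ are collectively injective since $S$ (hence each $U_i$) is reduced, via Nakayama.

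Next I would observe that, because $\mathcal{E}xt^1_{\pi_S}((\mathcal{E}_2,\mathcal{V}_2),(\mathcal{E}_1,\mathcal{V}_1))$ commutes with base change, the base change homomorphism $\tau^1(j_i)$ for each open immersion $j_i:U_i\hookrightarrow S$ is an isomorphism, so
$$\mathcal{E}xt^1_{\pi_{U_i}}\Big((\mathcal{E}_2,\mathcal{V}_2)|_{U_i},(\mathcal{E}_1,\mathcal{V}_1)|_{U_i}\Big)\simeq \mathcal{E}xt^1_{\pi_S}\Big((\mathcal{E}_2,\mathcal{V}_2),(\mathcal{E}_1,\mathcal{V}_1)\Big)\big|_{U_i}.$$
Call this sheaf $\mathcal{G}$. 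The family $\{e_s\}$ thus determines, on each $U_i$, a section $g_i:=\mu_{U_i}(\sigma_i)\in\operatorname{H}^0(U_i,\mathcal{G}|_{U_i})$, and on overlaps $U_i\cap U_j$ the two sections $g_i|_{U_i\cap U_j}$ and $g_j|_{U_i\cap U_j}$ have the same image under $\operatorname{H}^0(U_i\cap U_j,\iota_s)$ for every $s\in U_i\cap U_j$, because both compute $e_s$ and $\tau^1(s)$ is injective. Since $S$ is reduced, the collection $\{\iota_s\}$ separates sections of $\mathcal{G}$ over any open set (Nakayama again), so $g_i$ and $g_j$ agree on $U_i\cap U_j$. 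Hence the $g_i$ glue to a global section $g\in\operatorname{H}^0(S,\mathcal{G})$.

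Finally, I would run Proposition \ref{34} in reverse: since $\overline{\mu}$ in that proof is injective with image the subspace $\operatorname{H}^1(S,\mathcal{H}om_{\pi_S}(\cdots))$-quotient of $\operatorname{Ext}^1_S$, but the conclusion we actually want identifies $\operatorname{EXT}$ (not $\operatorname{EXT}_{glob}$) with the full $\operatorname{H}^0(S,\mathcal{G})$, I need to check that the global section $g$ just produced corresponds to the family $\{e_s\}$, and conversely that every $g\in\operatorname{H}^0(S,\mathcal{G})$ arises this way. Surjectivity is the point: given $g$, cover $S$ by affines $U_i$; over an affine $U_i$ condition (b) of Lemma \ref{29} holds, so $\mu_{U_i}$ is an isomorphism, giving $\sigma_i:=\mu_{U_i}^{-1}(g|_{U_i})$, and the family $\{\Phi_{i,s}(\sigma_i)\}$ is then a family of extensions whose associated global section is $g$; injectivity of the whole assignment follows from the separation property of $\{\iota_s\}$ exactly as above. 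Linearity is immediate since every map in sight is linear. The main obstacle is the gluing/well-definedness bookkeeping — making sure the local classes $\sigma_i$ (which live in $\operatorname{Ext}^1_{U_i}$, a group that genuinely depends on $U_i$ and need not glue) interact correctly through $\mu_{U_i}$ with the honest sheaf $\mathcal{G}$, and that nothing is lost in the (possibly non-injective, non-surjective) maps $\mu_{U_i}$ on non-affine pieces; the reducedness of $S$ and the base-change hypothesis are exactly what make this go through, and I would want to state precisely where each is used, mirroring the argument of Proposition \ref{34}.
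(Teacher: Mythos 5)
Your proposal is correct and follows essentially the same route as the paper's proof: Lemma \ref{31} together with the base-change hypothesis (so each $\tau^1(s)$ is an isomorphism) and reducedness of $S$ (Nakayama, so the maps $\iota_s$ separate sections) to turn the local classes $\sigma_i$ into sections of $\mathcal{E}xt^1_{\pi_S}\big((\mathcal{E}_2,\mathcal{V}_2),(\mathcal{E}_1,\mathcal{V}_1)\big)$ that glue, with Lemma \ref{29}(b) over an affine cover giving the inverse map. Only your opening plan --- reducing to Proposition \ref{34} by showing every family is globally defined --- would not by itself yield the statement, since Proposition \ref{34} identifies $\operatorname{EXT}_{glob}$ only with a subspace of $\operatorname{H}^0\big(S,\mathcal{E}xt^1_{\pi_S}\big((\mathcal{E}_2,\mathcal{V}_2),(\mathcal{E}_1,\mathcal{V}_1)\big)\big)$, but you rightly abandon that and construct the bijection directly, exactly as the paper does.
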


\begin{proof}
Let us fix any $\sigma\in \operatorname{H}^0(S,\mathcal{E}xt^1_{\pi_S}((\mathcal{E}_2,\mathcal{V}_2),$ $(\mathcal{E}_1,
\mathcal{V}_1)))$, let $\mathfrak{U}=\{U_i\}_{i\in I}$ be any open \emph{affine} covering of $S$ and let $\sigma_i:=
\sigma|_{U_i}$. By lemma \ref{29} (b) for $u:U_i\hookrightarrow S$ and $\mathcal{M}=\mathcal{O}_{U_i}$, for all
$i\in I$ we have an isomorphism

\begin{equation}\label{36}
\mu_i:\operatorname{Ext}^1_{U_i}\Big((\mathcal{E}_2,\mathcal{V}_2)|_{U_i},(\mathcal{E}_1,\mathcal{V}_1)|_{U_i}\Big)
\stackrel{\sim}{\rightarrow}\operatorname{H}^0\Big(U_i,\mathcal{E}xt^1_{\pi_S}\Big((\mathcal{E}_2,\mathcal{V}_2)|_{U_i},
(\mathcal{E}_1,\mathcal{V}_1)|_{U_i}\Big)\Big).
\end{equation}

For every point $s\in U_i$, we define $e_s:=\Phi_{i,s}(\mu_i^{-1}(\sigma_i))$; a direct check proves that such an
extension is well defined, i.e. it depends only on $s$ and not on $i$. So the family $\{e_s\}_{s\in S}$ is a family of
extensions of $(\mathcal{E}_2,\mathcal{V}_2)$ by $(\mathcal{E}_1,\mathcal{V}_1)$ over $S$. Since $\sigma$ is a global
section of $\mathcal{E}xt^1_{\pi_S}((\mathcal{E}_2,\mathcal{V}_2),(\mathcal{E}_1,\mathcal{V}_1))$, a direct computation
shows that such a family does not depend on the choice of the affine covering $\mathfrak{U}$. So we get a well defined
linear map

\begin{equation}\label{37}
\operatorname{H}^0\Big(S,\mathcal{E}xt^1_{\pi_S}\Big((\mathcal{E}_1,\mathcal{V}_1),(\mathcal{E}_2,\mathcal{V}_2)\Big)
\Big)\longrightarrow\operatorname{EXT}\Big((\mathcal{E}_2,\mathcal{V}_2),(\mathcal{E}_1,\mathcal{V}_1)\Big).
\end{equation}

We explicitly describe an inverse for such a map. Let $\{e_s\}_{s\in S}$ be any family in the set $\operatorname{EXT}
(-,-)$. By definition of family of extensions, there is an open covering $\mathfrak{U}=\{U_i\}_{i\in I}$ of $S$ and for
every $i$ there is an object 

$$\widetilde{\sigma_i}\in\operatorname{Ext}^1_{U_i}\Big((\mathcal{E}_2,\mathcal{V}_2)|_{U_i},(\mathcal{E}_1,
\mathcal{V}_1)|_{U_i}\Big)$$
such that $e_s=\Phi_{i,s}(\widetilde{\sigma_i})$ for all $s\in U_i$. Without loss of generality, we can assume that
$\mathfrak{U}$ is an affine covering. Therefore we can use (\ref{36}) and we define

$$\sigma_i:=\mu_i(\widetilde{\sigma_i})\in \operatorname{H}^0\Big(U_i,\mathcal{E}xt^1_{\pi_S}\Big((\mathcal{E}_2,\mathcal{V}_2),
(\mathcal{E}_1,\mathcal{V}_1)\Big)\Big).$$

As in lemma \ref{31} on $U_i$ instead of $S$, we get that for every $i\in I$ and for every $s$ in $U_i$, the morphism
$\Phi_{i,s}$ coincides with the composition:

\begin{eqnarray*}
& \operatorname{Ext}^1_{U_i}\Big((\mathcal{E}_2,\mathcal{V}_2)|_{U_i},(\mathcal{E}_1,\mathcal{V}_1)|_{U_i}\Big)
   \stackrel{\mu_i}{\longrightarrow}\operatorname{H}^0\Big(U_i,\mathcal{E}xt^1_{\pi_S}\Big((\mathcal{E}_2,
   \mathcal{V}_2),(\mathcal{E}_1,\mathcal{V}_1)\Big)\Big)\stackrel{\operatorname{H}^0(U_i,\iota_s)}{\longrightarrow} &\\
& \stackrel{\operatorname{H}^0(U_i,\iota_s)}{\longrightarrow} \operatorname{H}^0\Big(U_i,\mathcal{E}xt^1_{\pi_S}\Big(
  (\mathcal{E}_2,\mathcal{V}_2),(\mathcal{E}_1,\mathcal{V}_1)\Big)\otimes k(s)\Big)= &\\
& =\mathcal{E}xt^1_{\pi_S}\Big((\mathcal{E}_2,\mathcal{V}_2),(\mathcal{E}_1,\mathcal{V}_1)\Big)\otimes k(s)
   \stackrel{\tau^1(s)}{\longrightarrow} \operatorname{Ext}^1\Big((\mathcal{E}_2,\mathcal{V}_2)_s,
   (\mathcal{E}_1,\mathcal{V}_1)_s\Big). &
\end{eqnarray*}

So for every $s\in U_i$ we have:

$$\Phi_{i,s}(\widetilde{\sigma_i})=(\tau^1(s)\circ \operatorname{H}^0(U_i,\iota_s)\circ\mu_i)(\widetilde{\sigma_i})=
(\tau^1(s)\circ\operatorname{H}^0(U_i,\iota_s))(\sigma_i)=\tau^1(s)(\sigma_{i,s}).$$

Analogously, for every $s\in U_j$ we have $\Phi_{j,s}(\widetilde{\sigma_j})=\tau^1(s)(\sigma_{j,s})$. By definition
of family of extensions, if $s\in U_i\cap U_j$ then 

$$\tau^1(s)(\sigma_{i,s})=\tau^1(s)(\sigma_{j,s}).$$

By hypothesis, $\tau^1(s)$ is an isomorphism for all $s$ in $S$, so we conclude that for all pairs $i,j$ in $I$ and for
all $s\in U_i\cap U_j$ we have $\sigma_{i,s}=\sigma_{j,s}$. Since $S$ is reduced, we conclude that $\sigma_i$ coincides
with $\sigma_j$ over $U_i\cap U_j$. So there exists a unique

$$\sigma\in \operatorname{H}^0\Big(S,\mathcal{E}xt^1_{\pi_S}\Big((\mathcal{E}_2,\mathcal{V}_2),(\mathcal{E}_1,\mathcal{V}_1)
\Big)\Big)$$
such that $\sigma|_{U_i}=\sigma_i$ for all $i\in I$. A direct computation shows that $\sigma$ does not depend on the
choice of the covering $\mathfrak{U}$ nor on the choice of the family $\{\widetilde{\sigma_i}\}_{i\in I}$, so we get
a well defined map

\begin{equation}\label{38}
\operatorname{EXT}\Big((\mathcal{E}_2,\mathcal{V}_2),(\mathcal{E}_1,\mathcal{V}_1)\Big)\rightarrow\operatorname{H}^0
\Big(S,\mathcal{E}xt^1_{\pi_S}\Big((\mathcal{E}_2,\mathcal{V}_2),(\mathcal{E}_1,\mathcal{V}_1)\Big)\Big).
\end{equation}

Now it is easy to see that the map in (\ref{38}) is the inverse of (\ref{37}), so we conclude.
\end{proof}

\section{Universal families of extensions}
Now let us suppose that $\mathcal{E}xt^1_{\pi_S}((\mathcal{E}_2,\mathcal{V}_2),(\mathcal{E}_1,\mathcal{V}_1))$ 
\emph{commutes with base change}. Then let us define a contravariant functor $F$ from the category of $S$-schemes to
the category of sets. For every morphism $u:S'\rightarrow S$, let us consider the pullback diagram (\ref{8}) and let
us define:

$$F(S'):=\operatorname{H}^0\Big(S',\mathcal{E}xt^1_{\pi_{S'}}\Big((u',u)^*(\mathcal{E}_2,\mathcal{V}_2),
(u',u)^*(\mathcal{E}_1,\mathcal{V}_1)\Big)\Big).$$

For every morphism $v: S''\rightarrow S'$ of $S$-schemes we define $F(v):F(S')\rightarrow F(S'')$ as the
composition:

\begin{eqnarray*}
& \operatorname{H}^0\Big(S',\mathcal{E}xt^1_{\pi_{S'}}\Big((u',u)^*(\mathcal{E}_2,
   \mathcal{V}_2),(u',u)^*(\mathcal{E}_1,\mathcal{V}_1)\Big)\Big)\longrightarrow &\\
& \longrightarrow \operatorname{H}^0\Big(S'',v^*\mathcal{E}xt^1_{\pi_{S'}}\Big((u',u)^*(\mathcal{E}_2,\mathcal{V}_2),
   (u',u)^*(\mathcal{E}_1,\mathcal{V}_1)\Big)\Big)\stackrel{\operatorname{H}^0(S'',\tau^1(v))}{\longrightarrow} &\\
& \stackrel{\operatorname{H}^0(S'',\tau^1(v))}{\longrightarrow}\operatorname{H}^0\Big(S'',
   \mathcal{E}xt^1_{\pi_{S''}}\Big((u'\circ v',u\circ v)^*(\mathcal{E}_2,\mathcal{V}_2),(u'\circ v',u\circ v)^*
   (\mathcal{E}_1,\mathcal{V}_1)\Big)\Big). &
\end{eqnarray*}

Since we are assuming that $\mathcal{E}xt^1_{\pi_S}((\mathcal{E}_2,\mathcal{V}_2),(\mathcal{E}_1,\mathcal{V}_1))$
commutes with base change, so does $\mathcal{E}xt^1_{\pi_{S''}}((u'\circ v',u\circ v)^*(\mathcal{E}_2,\mathcal{V}_2),
(u'\circ v',u\circ v)^*(\mathcal{E}_1,\mathcal{V}_1))$. Therefore, $F$ is a contravariant functor from the category of
$S$-schemes to the category of sets.

\begin{proposition}\label{39}
Let us suppose that $\mathcal{E}xt^i_{\pi_S}((\mathcal{E}_2,\mathcal{V}_2),(\mathcal{E}_1,\mathcal{V}_1))$ commutes
with base change for $i=0,1$. Then the functor $F$ is represented by the vector bundle 

$$V:=\mathbb{V}\Big(\mathcal{E}xt^1_{\pi_S}\Big((\mathcal{E}_2,\mathcal{V}_2),(\mathcal{E}_1,\mathcal{V}_1)
\Big)^\vee\Big)\stackrel{\eta}{\longrightarrow} S$$
associated to the locally free sheaf $\mathcal{E}xt^1_{\pi_S}\left((\mathcal{E}_2,\mathcal{V}_2),(\mathcal{E}_1,
\mathcal{V}_1)\right)^\vee$. The fiber of $\eta$ over any point $s$ is canonically identified with $\operatorname{Ext}^1
((\mathcal{E}_2,\mathcal{V}_2)_s,(\mathcal{E}_1,\mathcal{V}_1)_s)$.
\end{proposition}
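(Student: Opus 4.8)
The plan is to reduce the statement to the universal property of the geometric vector bundle attached to a locally free sheaf, and then transport everything across the base-change isomorphisms established above. The first point to settle is that $\mathcal{G}:=\mathcal{E}xt^1_{\pi_S}((\mathcal{E}_2,\mathcal{V}_2),(\mathcal{E}_1,\mathcal{V}_1))$ is locally free on $S$, so that $V=\mathbb{V}(\mathcal{G}^\vee)$ is genuinely a vector bundle. This follows from Proposition \ref{27}: the hypothesis that $\mathcal{E}xt^1_{\pi_S}$ commutes with base change makes each $\tau^1(s)$ an isomorphism, in particular surjective, and then part (ii) of that proposition (with $i=1$) says that $\mathcal{G}$ is locally free near $s$ exactly when $\tau^0(s)$ is surjective --- which holds because $\mathcal{E}xt^0=\mathcal{H}om$ is also assumed to commute with base change. (One could equally invoke Proposition \ref{11} once one knows that $t^1$ is constant on $S$, but the argument through Proposition \ref{27} is more direct here.)

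Next I would recall, with the convention $\mathbb{V}(\mathcal{F})=\operatorname{Spec}_S(\operatorname{Sym}_{\mathcal{O}_S}\mathcal{F})$ used in the statement, that for any $S$-scheme $u:S'\rightarrow S$ there is a canonical bijection $\operatorname{Hom}_S(S',V)=\operatorname{Hom}_{\mathcal{O}_{S'}}(u^*\mathcal{G}^\vee,\mathcal{O}_{S'})=\operatorname{H}^0(S',u^*\mathcal{G})$, where the last equality uses that $\mathcal{G}$, hence $u^*\mathcal{G}$, is locally free. On the functor side, $F(S')=\operatorname{H}^0(S',\mathcal{E}xt^1_{\pi_{S'}}((u',u)^*(\mathcal{E}_2,\mathcal{V}_2),(u',u)^*(\mathcal{E}_1,\mathcal{V}_1)))$, and the base change homomorphism $\tau^1(u)$ identifies the sheaf in this $\operatorname{H}^0$ with $u^*\mathcal{G}$; it is an isomorphism precisely because $\mathcal{E}xt^1_{\pi_S}$ commutes with base change, by the discussion following Proposition \ref{27}. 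Composing these two identifications yields a bijection $F(S')\simeq\operatorname{Hom}_S(S',V)$, natural in $S'$.

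The step that requires real care is functoriality: one must check that for $v:S''\rightarrow S'$ the map $F(v)$ (as spelled out in the definition of $F$, in terms of $\operatorname{H}^0(S'',\tau^1(v))$ and the pullback of global sections) corresponds under the above identifications to the restriction map $\operatorname{Hom}_S(S',V)\rightarrow\operatorname{Hom}_S(S'',V)$. This comes down to the cocycle compatibility of the base-change maps --- that $\tau^1(u\circ v)$ is the composite of $\tau^1(v)$ with $v^*$ applied to $\tau^1(u)$ --- together with the obvious behaviour of the identification $\operatorname{H}^0(S',u^*\mathcal{G})\simeq\operatorname{Hom}_{\mathcal{O}_{S'}}(u^*\mathcal{G}^\vee,\mathcal{O}_{S'})$ under further pullback along $v$. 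This is the same bookkeeping carried out in \cite{L}, and I expect it to be the only genuine obstacle; once it is in place, Yoneda's lemma shows that $V$ represents $F$.

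Finally, for the fibre of $\eta$ over a closed point $s$, I would take $S'=\operatorname{Spec} k(s)$ with $u$ the inclusion of $s$: the fibre is $\mathbb{V}((\mathcal{G}\otimes k(s))^\vee)$, i.e. the affine space associated to the $k(s)$-vector space $\mathcal{G}\otimes k(s)=\mathcal{E}xt^1_{\pi_S}((\mathcal{E}_2,\mathcal{V}_2),(\mathcal{E}_1,\mathcal{V}_1))\otimes k(s)$, and $\tau^1(s)$ identifies this canonically with $\operatorname{Ext}^1((\mathcal{E}_2,\mathcal{V}_2)_s,(\mathcal{E}_1,\mathcal{V}_1)_s)$, using that over a point $\mathcal{E}xt^i_{\pi_S}$ agrees with $\operatorname{Ext}^i$ and that $\operatorname{H}^0$ of a point just returns the module. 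Everything outside the third paragraph is a direct application of the cohomology-and-base-change package already proved.
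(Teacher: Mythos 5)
Your proposal is correct and follows essentially the same route as the paper: deduce local freeness of $\mathcal{E}xt^1_{\pi_S}((\mathcal{E}_2,\mathcal{V}_2),(\mathcal{E}_1,\mathcal{V}_1))$ from base change for $i=0,1$ via proposition \ref{27}, identify $F(S')$ with $\operatorname{H}^0(S',u^*\hat{E})$ using the base-change isomorphism $\tau^1(u)$, and conclude by the universal property of the geometric vector bundle $\mathbb{V}(\hat{E}^\vee)$ (using $\hat{E}^{\vee\vee}=\hat{E}$). The only difference is that you spell out the naturality check and the fibre identification, which the paper leaves implicit.
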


\begin{proof}
By hypothesis and base change for $i=1$, the sheaf $\hat{E}:=\mathcal{E}xt^1_{\pi_S}((\mathcal{E}_2,\mathcal{V}_2),
(\mathcal{E}_1,$ $\mathcal{V}_1))$ commutes with base change, so for every $S$-scheme $u:S'\rightarrow S$ we have that

$$F(S')=\operatorname{H}^0\Big(S',u^*\mathcal{E}xt^1_{\pi_S}\Big((\mathcal{E}_2,\mathcal{V}_2),(\mathcal{E}_1,
\mathcal{V}_1)\Big)\Big)=\operatorname{H}^0(S',u^*\hat{E}).$$

Moreover, using base change for $i=0,1$, we get that $\hat{E}$ is a locally free sheaf. Therefore, the functor $E$ is
represented by the vector bundle $V$ associated to $\hat{E}^\vee$ by the universal property of that object. Note that
by assumption $\hat{E}$ is locally free, so $\hat{E}^{\vee\vee}=\hat{E}$.
\end{proof}

\begin{remark}
The universal element of $F(V)$ is constructed in the following way. Let us consider the inclusion of sheaves on $S$
given by $\hat{E}^\vee\hookrightarrow\eta_*\mathcal{O}_{V}$ and the induced canonical inclusion

\begin{eqnarray*}
& \operatorname{H}^0(S,\textrm{End} \hat{E})=\operatorname{H}^0(S,\hat{E}\otimes \hat{E}^\vee)\hookrightarrow
  \operatorname{H}^0(S,\hat{E}\otimes\eta_*\mathcal{O}_{V})= &\\
& =\operatorname{H}^0(S,\eta_*\eta^*\hat{E})=\operatorname{H}^0(V,\eta^*\hat{E})=F(V). &
\end{eqnarray*}

Then we consider the image of the identity of $\hat{E}$ under this series of maps and we get that this is the universal
object for the functor $F$.
\end{remark}

By combining propositions \ref{35} and \ref{39} we get the following corollary.

\begin{corollary}\label{40}
Let us suppose that $S$ is \emph{reduced} and that $\mathcal{E}xt^i_{\pi_S}((\mathcal{E}_2,\mathcal{V}_2),$
$(\mathcal{E}_1,\mathcal{V}_1))$ commutes with base change for $i=0,1$. Then there is a family of extensions $\{e_v\}_{v\in V}$
of $(\eta',\eta)^*(\mathcal{E}_2,\mathcal{V}_2)$ by $(\eta',\eta)^*(\mathcal{E}_1,\mathcal{V}_1)$. Such a family is universal
over the category of \emph{reduced} $S$-schemes.
\end{corollary}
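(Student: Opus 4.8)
The plan is to combine Propositions~\ref{35} and~\ref{39}, the substantive point being that the isomorphism of Proposition~\ref{35} is natural with respect to base change, so that the representability of the functor $F$ furnishes the required universal family.

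First I would note that $\eta\colon V\to S$ is a vector bundle over the reduced scheme $S$, hence $V$ is reduced (it is locally of the form $\mathbb{A}^N_S$ over $S$). Pulling the two given families back along $(\eta',\eta)$ produces families of coherent systems parametrized by $V$. Since by hypothesis $\mathcal{E}xt^i_{\pi_S}((\mathcal{E}_2,\mathcal{V}_2),(\mathcal{E}_1,\mathcal{V}_1))$ commutes with base change for $i=0,1$, the same argument used to define the functor $F$ shows that $\mathcal{E}xt^i_{\pi_V}((\eta',\eta)^*(\mathcal{E}_2,\mathcal{V}_2),(\eta',\eta)^*(\mathcal{E}_1,\mathcal{V}_1))$ commutes with base change and is identified, via the base change isomorphism $\tau^i(\eta)$, with $\eta^*\mathcal{E}xt^i_{\pi_S}((\mathcal{E}_2,\mathcal{V}_2),(\mathcal{E}_1,\mathcal{V}_1))$, for $i=0,1$. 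In particular the hypotheses of Proposition~\ref{35} hold over the reduced scheme $V$ for the pulled-back families, so that proposition yields a canonical isomorphism
$$\operatorname{EXT}\Big((\eta',\eta)^*(\mathcal{E}_2,\mathcal{V}_2),(\eta',\eta)^*(\mathcal{E}_1,\mathcal{V}_1)\Big)\;\simeq\;\operatorname{H}^0\Big(V,\eta^*\mathcal{E}xt^1_{\pi_S}\big((\mathcal{E}_2,\mathcal{V}_2),(\mathcal{E}_1,\mathcal{V}_1)\big)\Big)=F(V).$$
I then define $\{e_v\}_{v\in V}$ to be the image under this isomorphism of the universal element of $F(V)$ constructed in the remark following Proposition~\ref{39}, equivalently the element of $F(V)$ corresponding to $\operatorname{id}_V\in\operatorname{Hom}_S(V,V)$ under the representability isomorphism of that proposition. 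This is a family of extensions of $(\eta',\eta)^*(\mathcal{E}_2,\mathcal{V}_2)$ by $(\eta',\eta)^*(\mathcal{E}_1,\mathcal{V}_1)$ by the very definition of $\operatorname{EXT}$.

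It remains to establish the universal property over reduced $S$-schemes. Let $u\colon S'\to S$ be a reduced $S$-scheme and let $\{f_{s'}\}_{s'\in S'}$ be a family of extensions of $(u',u)^*(\mathcal{E}_2,\mathcal{V}_2)$ by $(u',u)^*(\mathcal{E}_1,\mathcal{V}_1)$. As above the hypotheses of Proposition~\ref{35} hold over $S'$, so $\{f_{s'}\}$ corresponds to a unique $\sigma\in F(S')=\operatorname{H}^0(S',u^*\mathcal{E}xt^1_{\pi_S}((\mathcal{E}_2,\mathcal{V}_2),(\mathcal{E}_1,\mathcal{V}_1)))$, and by Proposition~\ref{39} this $\sigma$ is the pullback of the universal element along a unique $S$-morphism $w\colon S'\to V$. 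The main obstacle is to check that the isomorphism of Proposition~\ref{35} is natural in the base: for every morphism $w$ between reduced $S$-schemes, pulling back families of extensions along $w$ must correspond, under the two instances of the isomorphism of Proposition~\ref{35}, to the obvious pullback of global sections of the relevant $\mathcal{E}xt^1$ sheaves (identified via $\tau^1(w)$, an isomorphism by base change commutation). This follows by tracing through the proof of Proposition~\ref{35}: the local isomorphisms $\mu_i$ supplied by Lemma~\ref{29}(b) come from the spectral sequence of \cite[proposition A.9]{BGMMN} and from Lemma~\ref{28}, both functorial in the base; the maps $\Phi_{i,s}$ are expressed in Lemma~\ref{31} as composites of $\mu$, of the $\operatorname{H}^0(\iota_s)$ and of the base change homomorphisms $\tau^1(s)$, all compatible with further pullback; and the passage from the local sections $\sigma_i$ to a global section uses only that the base is reduced. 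Granting this naturality, $(w',w)^*\{e_v\}_{v\in V}$ is the family of extensions attached to $w^*$ of the universal element, which is $\sigma$, hence equals $\{f_{s'}\}_{s'\in S'}$; and the uniqueness of $w$ follows from the uniqueness in Proposition~\ref{39} together with the injectivity of the correspondence of Proposition~\ref{35}. Therefore $\{e_v\}_{v\in V}$ is a universal family of extensions over the category of reduced $S$-schemes.
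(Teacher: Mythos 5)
Your proposal is correct and is essentially the paper's own argument: the paper proves this corollary simply by combining Propositions \ref{35} and \ref{39}, which is exactly the combination you carry out (with the additional, sound, observations that $V$ and the reduced $S$-schemes $S'$ inherit the hypotheses of Proposition \ref{35}, and that the identification of Proposition \ref{35} is compatible with base change). The naturality check you flag as the "main obstacle" is precisely what the paper leaves implicit, and your sketch of it is adequate.
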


Here ``universal'' means the following: given any reduced $S$-scheme $u:S'\rightarrow S$ and any family of extensions
$\{e_{s'}\}_{s'\in S'}$ of $(u',u)^*(\mathcal{E}_2,\mathcal{V}_2)$ by $(u',u)^*(\mathcal{E}_1,\mathcal{V}_1)$
parametrized by $S'$, there is exactly one morphism $\psi: S'\rightarrow V$ of $S$-schemes such that $\{e_{s'}\}_{s'
\in S'}$ is the pullback of $\{e_v\}_{v\in V}$ via $\psi$. The relevant diagram to consider is given as follows:

\[\begin{tikzpicture}[xscale=1.5,yscale=-1.2]
    \node (A0_2) at (2, 0.6) {$\curvearrowright$};
    \node (A1_0) at (0, 1) {$C\times S'$};
    \node (A1_2) at (2, 1) {$C\times V$};
    \node (A1_4) at (4, 1) {$C\times S$};
    \node (A2_1) at (1, 2) {$\square$};
    \node (A2_3) at (3, 2) {$\square$};
    \node (A3_0) at (0, 3) {$S'$};
    \node (A3_2) at (2, 3) {$V$};
    \node (A3_4) at (4, 3) {$S.$};
    \node (A4_2) at (2, 3.4) {$\curvearrowright$};
    \path (A1_4) edge [->]node [auto] {$\scriptstyle{\pi_S}$} (A3_4);
    \path (A1_0) edge [->,bend right=35]node [auto] {$\scriptstyle{u'}$} (A1_4);
    \path (A3_0) edge [->]node [auto] {$\scriptstyle{\psi}$} (A3_2);
    \path (A1_0) edge [->]node [auto] {$\scriptstyle{\psi'}$} (A1_2);
    \path (A3_0) edge [->,bend left=35,swap]node [auto] {$\scriptstyle{u}$} (A3_4);
    \path (A1_0) edge [->]node [auto] {$\scriptstyle{\pi_{S'}}$} (A3_0);
    \path (A1_2) edge [->]node [auto] {$\scriptstyle{\eta'}$} (A1_4);
    \path (A1_2) edge [->]node [auto] {$\scriptstyle{\pi_V}$} (A3_2);
    \path (A3_2) edge [->]node [auto] {$\scriptstyle{\eta}$} (A3_4);
\end{tikzpicture}\]

\begin{corollary}\label{41}
Let us suppose that $\operatorname{Hom}((\mathcal{E}_2,\mathcal{V}_2)_s,(\mathcal{E}_1,\mathcal{V}_1)_s)=0$ for all
$s\in S$ and that $\mathcal{E}xt^1_{\pi_S}((\mathcal{E}_2,\mathcal{V}_2),(\mathcal{E}_1,\mathcal{V}_1))$ commutes with
base change. Then there is an extension parametrized by $V$

\begin{equation}\label{42}
0\rightarrow (\eta',\eta)^*(\mathcal{E}_1,\mathcal{V}_1)\rightarrow (\mathcal{E}_V,\mathcal{V}_V)\rightarrow
(\eta',\eta)^*(\mathcal{E}_2,\mathcal{V}_2)\rightarrow 0
\end{equation}
that is \emph{universal} on the category of $S$-schemes.
\end{corollary}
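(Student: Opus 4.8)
The plan is to combine Corollary \ref{40} with the refined description of $\operatorname{EXT}$ available under hypothesis (a) of Lemma \ref{29}. First I would note that under the hypothesis $\operatorname{Hom}((\mathcal{E}_2,\mathcal{V}_2)_s,(\mathcal{E}_1,\mathcal{V}_1)_s)=0$ for all $s\in S$, the sheaf $\mathcal{H}om_{\pi_S}((\mathcal{E}_2,\mathcal{V}_2),(\mathcal{E}_1,\mathcal{V}_1))$ vanishes (as in the proof of Lemma \ref{29}, via cohomology and base change since $\tau^0(s)$ is then surjective with zero target). By semicontinuity this also implies $\mathcal{E}xt^0_{\pi_S}$ commutes with base change trivially. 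So the hypotheses of Proposition \ref{39} are met and we have the vector bundle $\eta:V\to S$ representing $F$, with a universal element $\sigma_{univ}\in F(V)=\operatorname{H}^0(V,\mathcal{E}xt^1_{\pi_V}((\eta',\eta)^*(\mathcal{E}_2,\mathcal{V}_2),(\eta',\eta)^*(\mathcal{E}_1,\mathcal{V}_1)))$.

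Next I would observe that the key point is that over \emph{any} $S$-scheme $u:S'\to S$ — not just reduced ones — the vanishing of $\operatorname{Hom}$ on fibers is preserved (it pulls back), so Lemma \ref{29}(a) applies to every such $S'$ and gives an \emph{isomorphism}
$$\mu:\operatorname{Ext}^1_{S'}\Big((u',u)^*(\mathcal{E}_2,\mathcal{V}_2),(u',u)^*(\mathcal{E}_1,\mathcal{V}_1)\Big)\xrightarrow{\ \sim\ }\operatorname{H}^0\Big(S',\mathcal{E}xt^1_{\pi_{S'}}\Big((u',u)^*(\mathcal{E}_2,\mathcal{V}_2),(u',u)^*(\mathcal{E}_1,\mathcal{V}_1)\Big)\Big)=F(S').$$
In particular, applying this with $S'=V$, the universal element $\sigma_{univ}\in F(V)$ comes from a genuine class in $\operatorname{Ext}^1_V((\eta',\eta)^*(\mathcal{E}_2,\mathcal{V}_2),(\eta',\eta)^*(\mathcal{E}_1,\mathcal{V}_1))$, and by Lemma \ref{32} that class is represented by an actual short exact sequence (\ref{42}) of families over $V$. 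This is the extension we want; the associated family of fiberwise extension classes $\{e_v\}_{v\in V}$ is exactly the one from Corollary \ref{40}, since by Lemma \ref{31} the fiber of $\mu^{-1}(\sigma_{univ})$ at $v$ is $\Phi_v(\mu^{-1}(\sigma_{univ}))=\tau^1(v)\circ\operatorname{H}^0(V,\iota_v)(\sigma_{univ})$, which is precisely the value of $\sigma_{univ}$ at $v$.

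For universality over an arbitrary $S$-scheme $u:S'\to S$: given an extension $0\to(u',u)^*(\mathcal{E}_1,\mathcal{V}_1)\to(\mathcal{E}_{S'},\mathcal{V}_{S'})\to(u',u)^*(\mathcal{E}_2,\mathcal{V}_2)\to0$, Lemma \ref{32} gives it a class $\xi\in\operatorname{Ext}^1_{S'}(\cdots)$, and $\mu(\xi)\in F(S')$; by the representability in Proposition \ref{39} there is a unique $S$-morphism $\psi:S'\to V$ with $\psi^*\sigma_{univ}=\mu(\xi)$ (in the sense of $F(\psi)(\sigma_{univ})=\mu(\xi)$). Pulling the exact sequence (\ref{42}) back along $\psi$ gives a class whose image under $\mu$ (now over $S'$) is $F(\psi)(\sigma_{univ})=\mu(\xi)$; since $\mu$ is an isomorphism over $S'$ (this is where hypothesis (a) over \emph{all} $S'$, not just reduced ones, is essential and makes this stronger than Corollary \ref{40}), the pullback is equivalent to the given extension. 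Uniqueness of $\psi$ follows from uniqueness in the representability statement together with injectivity of $\mu$. The main obstacle is the bookkeeping in the last step: one must check that $\mu$ is natural with respect to pullback along $\psi$, i.e. that $F(\psi)\circ\mu=\mu\circ(\text{pullback on }\operatorname{Ext}^1)$, which amounts to compatibility of the spectral sequence edge map of \cite[proposition A.9]{BGMMN} with base change; this is routine but is the one point requiring care, and it is exactly the family-level analogue of the argument in \cite{L}.
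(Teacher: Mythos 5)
Your proposal is correct and follows essentially the same route as the paper's own proof: Lemma \ref{29}(a) applied over every $S$-scheme $S'$ (using that the fiberwise vanishing of $\operatorname{Hom}$ pulls back) identifies $F(S')$ with $\operatorname{Ext}^1_{S'}$, i.e.\ with classes of genuine extensions via Lemma \ref{32}, and then representability of $F$ by $V$ (Proposition \ref{39}) together with the canonicity/naturality of $\mu$ yields the universal extension (\ref{42}) and its universal property. You merely spell out explicitly two points the paper leaves implicit, namely that $\mathcal{E}xt^0_{\pi_S}$ trivially commutes with base change (so Proposition \ref{39} applies) and that $\mu$ is compatible with pullback, which the paper subsumes under ``$\mu$ is canonical''.
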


Here ``universal'' means the following: let us fix any $S$-scheme $u:S'\rightarrow S$ and any extension

\begin{equation}\label{43}
0\rightarrow (u',u)^*(\mathcal{E}_1,\mathcal{V}_1)\rightarrow (\mathcal{E}_{S'},\mathcal{V}_{S'})\rightarrow
(u',u)^*(\mathcal{E}_2,\mathcal{V}_2)\rightarrow 0
\end{equation}
over $S'$. Then there is a unique morphism $\psi: S'\rightarrow V$ of $S$-schemes such that (\ref{43}) is the pullback
of (\ref{42}) via $\psi$.

\begin{proof}
If we assume the hypotheses, then by lemma \ref{29} (a) for all morphisms $u:S'\rightarrow S$ we get a canonical
isomorphism

$$\mu:\operatorname{Ext}^1_{S'}\Big((u',u)^*(\mathcal{E}_2,\mathcal{V}_2),(u',u)^*(\mathcal{E}_1,\mathcal{V}_1)\Big)
\stackrel{\sim}{\rightarrow}$$
$$\stackrel{\sim}{\rightarrow}\operatorname{H}^0\Big(S',\mathcal{E}xt^1_{\pi_{S'}}\Big((u',u)^*(\mathcal{E}_2,
\mathcal{V}_2),(u',u)^*(\mathcal{E}_1,\mathcal{V}_1)\Big)\Big).$$

If we use proposition \ref{34} and the hypothesis, then this coincides also with

$$\operatorname{EXT}_{glob}\Big((u',u)^*(\mathcal{E}_2,\mathcal{V}_2),(u',u)^*(\mathcal{E}_1,\mathcal{V}_1)\Big).$$

So for every $S$-scheme $S'$ as before we can consider the set $F(S')$ as the set of all extensions of $(u',u)^*
(\mathcal{E}_2,\mathcal{V}_2)$ by $(u',u)^*(\mathcal{E}_1,\mathcal{V}_1)$ over $S'$. In particular, the universal
object of the functor $F$ corresponds to the extension (\ref{42}). Then the universal property of such an object
(together with the fact that $\mu$ is canonical) proves the claim.
\end{proof}

\section{Universal families of non-degenerate extensions}
Let us fix any integer $t\geq 1$ and let us suppose again that $\mathcal{E}xt^1_{\pi_S}((\mathcal{E}',\mathcal{V}'),$
$(\mathcal{E},\mathcal{V}))$ \emph{commutes with base change}. Then let us define a contravariant functor $G_t$ from
the category of $S$-schemes to the category of sets. For every morphism $u:S'\rightarrow S$, let us consider the
pullback diagram (\ref{8}) and let us define:

$$G_t(S'):=\Big\{\textrm{locally free quotients of rank }t\textrm{ of }$$
$$\mathcal{E}xt^1_{\pi_{S'}}\Big((u',u)^*(\mathcal{E}_2,
\mathcal{V}_2),(u',u)^*(\mathcal{E}_1,\mathcal{V}_1)\Big)^\vee\Big\}.$$

Let us fix any morphism $v: S''\rightarrow S'$ of $S$-schemes and any object of $F_t(S')$, i.e. any locally free
quotient of rank $t$:

$$\mathcal{E}xt^1_{\pi_{S'}}\Big((u',u)^*(\mathcal{E}_2,\mathcal{V}_2),(u',u)^*(\mathcal{E}_1,\mathcal{V}_1)\Big)^\vee
\longrightarrow\mathcal{M}\longrightarrow 0.$$

Then by pullback via $v$, we get an exact sequence:

\begin{equation}\label{48}
v^*\mathcal{E}xt^1_{\pi_{S'}}\Big((u',u)^*(\mathcal{E}_2,\mathcal{V}_2),(u',u)^*(\mathcal{E}_1,\mathcal{V}_1)
\Big)^\vee\longrightarrow v^*\mathcal{M}\longrightarrow 0.
\end{equation}

Using base change for $i=1$ we get:

\begin{eqnarray*}
& v^*\mathcal{E}xt^1_{\pi_{S'}}\Big((u',u)^*(\mathcal{E}_2,\mathcal{V}_2),(u',u)^*(\mathcal{E}_1,
   \mathcal{V}_1)\Big)^\vee= &\\
& =\Big(v^*\mathcal{E}xt^1_{\pi_{S'}}\Big((u',u)^*(\mathcal{E}_2,\mathcal{V}_2),(u',u)^*(\mathcal{E}_1,
   \mathcal{V}_1)\Big)\Big)^\vee\simeq &\\
& \simeq\mathcal{E}xt^1_{\pi_{S''}}\Big((u'\circ v',u\circ v)^*(\mathcal{E}_2,\mathcal{V}_2),
   (u'\circ v',u\circ v)^*(\mathcal{E}_1,\mathcal{V}_1)\Big)\Big)^\vee. &
\end{eqnarray*}

Therefore, (\ref{48}) gives an element of $G_t(S'')$, so we get a set map $G_t(v):G_t(S')\rightarrow G_t(S'')$.
Using base change for $i=1$, this gives rise to a contravariant functor $G_t$ on the category of $S$-schemes.

\begin{proposition}\label{49}
Let us suppose that $\mathcal{E}xt^i_{\pi_S}((\mathcal{E}_2,\mathcal{V}_2),(\mathcal{E}_1,\mathcal{V}_1))$
commutes with base change for $i=0,1$. Then for every $t\geq 1$ the functor $G_t$ is represented by the relative
Grassmannian of rank $t$

$$Q_t:=\operatorname{Grass}\Big(t,\mathcal{E}xt^1_{\pi_S}\Big((\mathcal{E}_2,\mathcal{V}_2),(\mathcal{E}_1,
\mathcal{V}_1)\Big)^\vee\Big)\stackrel{\theta_t}{\longrightarrow} S$$
associated to the locally free sheaf $\mathcal{E}xt^1_{\pi_S}((\mathcal{E}_2,\mathcal{V}_2),(\mathcal{E}_1,
\mathcal{V}_1))^\vee$ on $S$. The fiber of $\theta_t$ over any point $s$ is canonically identified with $\operatorname{Grass}
(t,\operatorname{Ext}^1((\mathcal{E}_2,\mathcal{V}_2)_s,(\mathcal{E}_1,\mathcal{V}_1)_s))$.
\end{proposition}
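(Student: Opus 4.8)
The plan is to imitate the proof of Proposition \ref{39} with the relative Grassmannian in place of the linear space $\mathbb{V}(-)$. As in that proof, the key point is that under the hypothesis ``$\mathcal{E}xt^i_{\pi_S}((\mathcal{E}_2,\mathcal{V}_2),(\mathcal{E}_1,\mathcal{V}_1))$ commutes with base change for $i=0,1$'', the sheaf $\hat{E}:=\mathcal{E}xt^1_{\pi_S}((\mathcal{E}_2,\mathcal{V}_2),(\mathcal{E}_1,\mathcal{V}_1))$ is locally free on $S$; hence so is its dual $\hat{E}^\vee$, and $\hat{E}^{\vee\vee}=\hat{E}$. Therefore the relative Grassmannian $Q_t=\operatorname{Grass}(t,\hat{E}^\vee)$ is defined and comes with its structural morphism $\theta_t:Q_t\to S$ and with the universal rank-$t$ locally free quotient $\theta_t^*\hat{E}^\vee\twoheadrightarrow\mathcal{U}$.

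First I would recall the functor-of-points description of the relative Grassmannian (for instance from \cite[\S 9.7]{Ha}-style references, or EGA): for any $S$-scheme $u:S'\to S$, the set $\operatorname{Hom}_S(S',Q_t)$ is canonically identified with the set of rank-$t$ locally free quotients of $u^*\hat{E}^\vee$. Then I would show this is exactly $G_t(S')$: using base change for $i=1$, one has the canonical isomorphism
$$u^*\hat{E}^\vee=\Big(u^*\mathcal{E}xt^1_{\pi_S}\big((\mathcal{E}_2,\mathcal{V}_2),(\mathcal{E}_1,\mathcal{V}_1)\big)\Big)^\vee\simeq\mathcal{E}xt^1_{\pi_{S'}}\big((u',u)^*(\mathcal{E}_2,\mathcal{V}_2),(u',u)^*(\mathcal{E}_1,\mathcal{V}_1)\big)^\vee,$$
which is precisely the sheaf whose rank-$t$ locally free quotients constitute $G_t(S')$ by definition. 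One checks that this identification is compatible with the transition maps $G_t(v)$ defined just above via pullback of quotients, exactly as in the paragraph preceding the statement; this is a routine naturality verification. Combining the two identifications gives a natural bijection $\operatorname{Hom}_S(-,Q_t)\simeq G_t(-)$, i.e.\ $Q_t$ represents $G_t$, with universal element the universal quotient $\theta_t^*\hat{E}^\vee\twoheadrightarrow\mathcal{U}$.

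For the last sentence, I would compute the fiber of $\theta_t$ over a closed point $s\in S$: it is $\operatorname{Grass}(t,\hat{E}^\vee\otimes k(s))$, and by base change for $i=1$ (applied to the inclusion $s\hookrightarrow S$) the space $\hat{E}^\vee\otimes k(s)=(\hat{E}\otimes k(s))^\vee$ is canonically identified with $\operatorname{Ext}^1((\mathcal{E}_2,\mathcal{V}_2)_s,(\mathcal{E}_1,\mathcal{V}_1)_s)^\vee$, so the fiber is canonically $\operatorname{Grass}(t,\operatorname{Ext}^1((\mathcal{E}_2,\mathcal{V}_2)_s,(\mathcal{E}_1,\mathcal{V}_1)_s))$. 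The only genuine subtlety — the ``hard part'', such as it is — is keeping straight the duality conventions so that the relative Grassmannian of \emph{rank-$t$ quotients} of $\hat{E}^\vee$ has fibers equal to the Grassmannian of \emph{$t$-dimensional subspaces} of $\operatorname{Ext}^1$; this is purely bookkeeping and is forced by the choice made in the definition of $G_t$, matching the convention already used in Proposition \ref{39}. Everything else is a direct transcription of the representability argument for $\mathbb{V}(\hat{E}^\vee)$ with $\operatorname{Grass}(t,-)$ substituted for $\mathbb{V}(-)$.
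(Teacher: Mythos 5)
Your proposal is correct and follows essentially the same route as the paper's proof: use base change for $i=0,1$ to see that $\hat{E}=\mathcal{E}xt^1_{\pi_S}((\mathcal{E}_2,\mathcal{V}_2),(\mathcal{E}_1,\mathcal{V}_1))$ is locally free and commutes with base change, identify $G_t(S')$ with the rank-$t$ locally free quotients of $u^*\hat{E}^\vee$, and conclude by the universal property of the Grassmannian functor $\operatorname{Grass}(t,\hat{E}^\vee)$, with the fiber description coming from base change at each point $s$. Your write-up only adds the routine naturality and duality bookkeeping that the paper leaves implicit.
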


\begin{proof}
By hypothesis and base change for $i=0,1$, the sheaf $\hat{E}:=\mathcal{E}xt^1_{\pi_S}((\mathcal{E}_2,\mathcal{V}_2),$
$(\mathcal{E}_1,\mathcal{V}_1))$ commutes with base change and it is locally free. Therefore, for every
$S$-scheme $u:S'\rightarrow S$, $G_t(S')$ is equal to the set of locally free quotients of rank $t$ of
$u^*\hat{E}^\vee$. Now $G_t$ is represented by the Grassmannian bundle $\theta_t:\operatorname{Grass}(t,\hat{E}^\vee)
\rightarrow S$ by the universal property of the Grassmannian functor associated to every quasi-coherent
$\mathcal{O}_S$-module. Note that since $\hat{E}$ is locally free, then $\hat{E}^{\vee\vee}=\hat{E}$.
\end{proof}

\begin{remark}
In general we don't know how to explicitly describe the universal object of the functor $G_t$. We only know that it
will be something of the form

$$\mathcal{E}xt^1_{\pi_{Q_t}}\Big((\theta'_t,\theta_t)^*(\mathcal{E}_2,\mathcal{V}_2),(\theta'_t,\theta_t)^*
(\mathcal{E}_1,\mathcal{V}_1)\Big)^{\vee}\stackrel{q}{\longrightarrow}\overline{\mathcal{M}}_t\longrightarrow 0$$
for some locally free sheaf $\overline{\mathcal{M}}_t$ on $Q_t$ of rank $t$. It is reasonable that
$\overline{\mathcal{M}}_t$ is the very ample sheaf on $Q_t$ that induces the Pl\"{u}cker embedding of the relative
Grassmannian $Q_t$ into a projective space, but we don't have a proof of this fact (see the next section for the
special case $t=1$).
\end{remark}

\begin{defin}\label{62}
Let us fix any scheme $R$, any locally free $\mathcal{O}_R$-module $\mathcal{M}$ of rank $t$ and any exact sequence of
families of coherent systems of the form

\begin{equation}\label{50}
0\rightarrow(\mathcal{F}_1,\mathcal{Z}_1)\otimes_R\mathcal{M}\rightarrow(\mathcal{F},\mathcal{Z})\rightarrow
(\mathcal{F}_2,\mathcal{Z}_2)\rightarrow 0.
\end{equation}

By restriction to any fiber $C_r=C\times\{r\}$ over any point $r$ of $R$, we get a sequence that is a representative
for an object

\begin{eqnarray*}
& \xi_r\in\operatorname{Ext}^1\Big((\mathcal{F}_{2,r},\mathcal{Z}_{2,r}),(\mathcal{F}_{1,r},\mathcal{Z}_{1,r})\otimes_r
  \mathcal{M}_r\Big)= &\\
& =\operatorname{Ext}^1\Big((\mathcal{F}_{2,r},\mathcal{Z}_{2,r}),(\mathcal{F}_{1,r},\mathcal{Z}_{1,r})^{\oplus_t}\Big)
  =:H_r^{\oplus_t}. &
\end{eqnarray*}

So we can write $\xi_r=(\xi_r^1,\cdots,\xi_r^t)$. Then we say that (\ref{50}) is \emph{non-degenerate of rank $t$ on
the left} if for all points $r$ of $R$ the objects $\xi_r^i$ for $i=1,\cdots,t$ are linearly independent in $H_r$.
Analogously, we call non-degenerate on the left any family $\{e_r\}_{r\in R}$ of extensions of the same 2 objects on
the left and on the right of (\ref{50}) such that $e_r$ is non-degenerate for all $r\in R$. Similar definitions can
be given for \emph{non-degenerate (families of) extensions of rank $t$ on the right}.
\end{defin}

\begin{lemma}\label{51}
Let us assume the same hypotheses as for proposition \ref{49}. Then for every $S$-scheme $u:S'\rightarrow S$ we have
that $G_t(S')$ is the set of all the families of non-degenerate extensions of $(u',u)^*(\mathcal{E}_2,\mathcal{V}_2)$
by $(u',u)^*(\mathcal{E}_1,\mathcal{V}_1)\otimes_{S'}\mathcal{M}$ with arbitrary $\mathcal{M}$ locally free of rank
$t$ on $S'$, modulo the canonical operation of $\operatorname{H}^0(S',GL(t,$ $\mathcal{O}_{S'}))$.
\end{lemma}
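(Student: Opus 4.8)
The plan is to unwind the description of $G_t(S')$ supplied by Proposition \ref{49} and to match it, through Lemma \ref{28} and the machinery of Section 4, with families of non-degenerate extensions. Write $A:=(u',u)^*(\mathcal{E}_2,\mathcal{V}_2)$, $B:=(u',u)^*(\mathcal{E}_1,\mathcal{V}_1)$ and $\hat E_{S'}:=\mathcal{E}xt^1_{\pi_{S'}}(A,B)$; since $\mathcal{E}xt^1_{\pi_S}((\mathcal{E}_2,\mathcal{V}_2),(\mathcal{E}_1,\mathcal{V}_1))$ commutes with base change for $i=1$ we have $\hat E_{S'}=u^*\hat E$, and exactly as in the proof of Proposition \ref{39} the hypotheses for $i=0,1$ force $\hat E$, hence $\hat E_{S'}$, to be locally free. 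By Proposition \ref{49} the set $G_t(S')$ is therefore the set of rank-$t$ locally free quotients $\hat E_{S'}^{\vee}\rightarrow\mathcal{M}\rightarrow 0$; dualising (legitimate because $\hat E_{S'}$ is locally free), this is the same datum as a rank-$t$ subbundle inclusion $\iota:\mathcal{M}^{\vee}\hookrightarrow\hat E_{S'}$, i.e.\ an $\mathcal{O}_{S'}$-linear map which is injective on every fibre (equivalently, has locally free cokernel). Two quotients with targets $\mathcal{M}$ and $\mathcal{M}'$ give the same point of $G_t(S')$ precisely when their images in $\hat E_{S'}$ coincide, equivalently when they are intertwined by an isomorphism $\mathcal{M}\xrightarrow{\sim}\mathcal{M}'$; for a fixed $\mathcal{M}$ the residual ambiguity is the action of $\operatorname{Aut}(\mathcal{M})$, which on local trivialisations of $\mathcal{M}$ is the action of $GL(t,\mathcal{O}_{S'})$ appearing in the statement.

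The next step is to turn $\iota$ into a family of extensions. View $\iota$ as an element of $\operatorname{Hom}_{\mathcal{O}_{S'}}(\mathcal{M}^{\vee},\hat E_{S'})=\operatorname{H}^0(S',\hat E_{S'}\otimes_{S'}\mathcal{M})$ and apply the canonical isomorphism of Lemma \ref{28} over $S'$ (with the locally free rank-$t$ sheaf $\mathcal{M}$) to rewrite this group as
$$\operatorname{H}^0\Big(S',\mathcal{E}xt^1_{\pi_{S'}}\big(A,B\otimes_{S'}\mathcal{M}\big)\Big).$$
Pulling back and tensoring with a locally free sheaf both preserve the property of commuting with base change, and by Lemma \ref{28} they are intertwined with $\mathcal{E}xt^1$, so $\mathcal{E}xt^1_{\pi_{S'}}(A,B\otimes_{S'}\mathcal{M})$ again commutes with base change; hence the construction of Proposition \ref{35} (together with Lemmas \ref{29}(b) and \ref{31}) applies to the pair of families $A$, $B\otimes_{S'}\mathcal{M}$ and produces from $\iota$ a family of extensions $\{e_{s'}\}_{s'\in S'}$ of $A$ by $B\otimes_{S'}\mathcal{M}$, characterised fibrewise by $e_{s'}=\tau^1(s')\big(\operatorname{H}^0(S',\iota_{s'})(\mu(\iota))\big)$ in the notation of Lemma \ref{31}. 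This assignment is linear and $\operatorname{Aut}(\mathcal{M})$-equivariant for the operation of $\operatorname{H}^0(S',GL(t,\mathcal{O}_{S'}))$ on families of extensions induced by local trivialisations of $\mathcal{M}$.

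It remains to identify ``$\iota$ a subbundle inclusion'' with ``$\{e_{s'}\}$ non-degenerate of rank $t$ on the left'' and to check bijectivity. Fix $s'\in S'$: composing the base-change isomorphism $\tau^1(s')$ with the restriction to $C_{s'}$ of the Lemma \ref{28}-isomorphism, and picking a basis of $\mathcal{M}_{s'}$, the fibre map $\iota_{s'}:\mathcal{M}^{\vee}_{s'}\rightarrow(\hat E_{S'})_{s'}=\operatorname{Ext}^1(A_{s'},B_{s'})=H_{s'}$ is exactly the $t$-tuple $(e_{s'}^1,\dots,e_{s'}^t)$ of components of $e_{s'}\in H_{s'}^{\oplus t}$ from Definition \ref{62}; thus $\iota_{s'}$ is injective iff $e_{s'}^1,\dots,e_{s'}^t$ are linearly independent, so $\iota$ is a subbundle inclusion iff $\{e_{s'}\}$ is non-degenerate. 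Hence the map of the second paragraph descends to a well-defined map from $G_t(S')$ to the set of non-degenerate families of rank $t$ on the left modulo $\operatorname{H}^0(S',GL(t,\mathcal{O}_{S'}))$; it is surjective because any non-degenerate family of extensions of $A$ by $B\otimes_{S'}\mathcal{M}$ yields, through Proposition \ref{35} and Lemma \ref{28}, a fibrewise-injective $\iota:\mathcal{M}^{\vee}\hookrightarrow\hat E_{S'}$, hence a quotient $\iota^{\vee}$; and injective because the image subbundle $\operatorname{Im}(\iota)\subseteq\hat E_{S'}$ is recovered from $\{e_{s'}\}$ via the local sections supplied by Lemma \ref{29}(b) on an affine cover, glued by Proposition \ref{35}. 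The main obstacle I expect is exactly this last point: ensuring the gluing of the local data is legitimate, which is where the reducedness hypothesis used throughout Section 4 must enter, so that the statement is really to be read over reduced $S$-schemes; secondarily, one must match the equivalence relations on the two sides ($\operatorname{Aut}(\mathcal{M})$-orbits of quotients versus $GL(t,\mathcal{O}_{S'})$-orbits of families) without over- or under-counting.
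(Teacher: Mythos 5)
Your argument is essentially the paper's own proof, just written out in detail: the paper likewise identifies $G_t(S')$ with the fibrewise-injective (``nowhere vanishing'') global sections of $\mathcal{E}xt^1_{\pi_{S'}}\big((u',u)^*(\mathcal{E}_2,\mathcal{V}_2),(u',u)^*(\mathcal{E}_1,\mathcal{V}_1)\big)\otimes_{S'}\mathcal{M}$ modulo $\operatorname{H}^0(S',GL(t,\mathcal{O}_{S'}))$, and then concludes via lemma \ref{28} and proposition \ref{35}. Your caveat about reducedness is well observed but applies equally to the paper's proof (which also invokes proposition \ref{35}); in practice the lemma is only deployed over reduced bases (corollary \ref{52}) or with the Hom-vanishing hypothesis replacing it (corollary \ref{53}).
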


\begin{proof}
By construction, $G_t(S')$ is equal to the set of all nowhere vanishing global sections of every sheaf on $S'$ of the
form

$$\mathcal{E}xt^1_{\pi_{S'}}\Big((u',u)^*(\mathcal{E}_2,\mathcal{V}_2),(u',u)^*(\mathcal{E}_1,\mathcal{V}_1)\Big)
\otimes_{S'}\mathcal{M}$$
with arbitrary $\mathcal{M}$ locally free of rank $t$ on $S'$, modulo the canonical operation of $\operatorname{H}^0
(S',GL(t,$ $\mathcal{O}_{S'}))$. Since every such $\mathcal{M}$ is locally free, we can use lemma \ref{28} and we
conclude by proposition \ref{35}.
\end{proof}

The proofs of the following two corollaries are modelled on the proofs of corollaries \ref{40} and \ref{41} together
with lemma \ref{51} and proposition \ref{49}, so we omit the details.

\begin{corollary}\label{52}
Let us fix any $t\geq 1$, let us suppose that $S$ is \emph{reduced} and that $\mathcal{E}xt^i_{\pi_S}((\mathcal{E}_2,
\mathcal{V}_2),(\mathcal{E}_1,\mathcal{V}_1))$ commutes with base change for $i=0,1$. Then there is a family
$\{e_q\}_{q\in Q_t}$ of non-degenerate extensions of rank $t$ on the left of $(\theta'_t,\theta_t)^*
(\mathcal{E}_2,\mathcal{V}_2)$ by $(\theta'_t,\theta_t)^*(\mathcal{E}_1,\mathcal{V}_1)\otimes_{Q_t}\overline{\mathcal{M}}_t$,
which is universal on the category of reduced $S$-schemes.
\end{corollary}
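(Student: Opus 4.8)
The plan is to combine Proposition \ref{49}, which exhibits $Q_t$ as a scheme representing the functor $G_t$, with Lemma \ref{51}, which reinterprets $G_t(S')$ as the set of families of non-degenerate extensions of rank $t$ on the left (modulo the action of $\operatorname{H}^0(S',GL(t,\mathcal{O}_{S'}))$); this is the exact analogue of the way Corollary \ref{40} was deduced from Proposition \ref{39} and Proposition \ref{35}. First I would record the preliminary observations that make the auxiliary results applicable everywhere they are needed: since $S$ is \emph{reduced} and $\theta_t\colon Q_t\to S$ is smooth (a relative Grassmannian bundle), $Q_t$ is again \emph{reduced}; and since $\mathcal{E}xt^i_{\pi_S}((\mathcal{E}_2,\mathcal{V}_2),(\mathcal{E}_1,\mathcal{V}_1))$ commutes with base change for $i=0,1$, so does its pullback $\mathcal{E}xt^i_{\pi_{Q_t}}((\theta'_t,\theta_t)^*(\mathcal{E}_2,\mathcal{V}_2),(\theta'_t,\theta_t)^*(\mathcal{E}_1,\mathcal{V}_1))$, and likewise after pulling back along any reduced $S$-scheme and after tensoring with a locally free sheaf (here Lemma \ref{28} is used). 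Hence the hypotheses of Lemma \ref{51} and of Proposition \ref{35} hold over $Q_t$ and over every reduced $S$-scheme.

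Next I would construct the family $\{e_q\}_{q\in Q_t}$. By Proposition \ref{49} and the remark following it, there is a universal rank-$t$ locally free quotient
$$\mathcal{E}xt^1_{\pi_{Q_t}}\Big((\theta'_t,\theta_t)^*(\mathcal{E}_2,\mathcal{V}_2),(\theta'_t,\theta_t)^*(\mathcal{E}_1,\mathcal{V}_1)\Big)^{\vee}\stackrel{q}{\longrightarrow}\overline{\mathcal{M}}_t\longrightarrow 0.$$
Applying Lemma \ref{51} with $S'=Q_t$, $u=\theta_t$ and $\mathcal{M}=\overline{\mathcal{M}}_t$, this quotient (viewed as an element of $G_t(Q_t)$) corresponds to a globally defined family $\{e_q\}_{q\in Q_t}$ of extensions of $(\theta'_t,\theta_t)^*(\mathcal{E}_2,\mathcal{V}_2)$ by $(\theta'_t,\theta_t)^*(\mathcal{E}_1,\mathcal{V}_1)\otimes_{Q_t}\overline{\mathcal{M}}_t$: concretely, via Lemma \ref{28} and Proposition \ref{35} the quotient $q$ is dual to a fiberwise-injective subbundle inclusion $\overline{\mathcal{M}}_t^{\vee}\hookrightarrow\mathcal{E}xt^1_{\pi_{Q_t}}((\theta'_t,\theta_t)^*(\mathcal{E}_2,\mathcal{V}_2),(\theta'_t,\theta_t)^*(\mathcal{E}_1,\mathcal{V}_1))$, equivalently a nowhere-vanishing global section of $\mathcal{E}xt^1_{\pi_{Q_t}}(\cdots)\otimes_{Q_t}\overline{\mathcal{M}}_t$ realizing the desired family. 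Non-degeneracy of rank $t$ on the left is then a fiberwise check matching Definition \ref{62}: at $q\in Q_t$ the fiber of $q$ is a $t$-dimensional quotient of $\operatorname{Ext}^1((\mathcal{E}_2,\mathcal{V}_2)_q,(\mathcal{E}_1,\mathcal{V}_1)_q)^{\vee}$, hence dually a $t$-dimensional subspace of $\operatorname{Ext}^1((\mathcal{E}_2,\mathcal{V}_2)_q,(\mathcal{E}_1,\mathcal{V}_1)_q)$, and the classes $\xi_q^1,\dots,\xi_q^t$ of Definition \ref{62} are the images of a basis of $\overline{\mathcal{M}}_{t,q}^{\vee}$ under this inclusion, hence linearly independent exactly because the inclusion is fiberwise injective.

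For the universal property I would mimic the proof of Corollary \ref{40}. Given a reduced $S$-scheme $u\colon S'\to S$ and a family $\{e_{s'}\}_{s'\in S'}$ of non-degenerate extensions of rank $t$ on the left of $(u',u)^*(\mathcal{E}_2,\mathcal{V}_2)$ by $(u',u)^*(\mathcal{E}_1,\mathcal{V}_1)\otimes_{S'}\mathcal{M}$ with $\mathcal{M}$ locally free of rank $t$ on $S'$, Lemma \ref{51} identifies it (modulo $\operatorname{H}^0(S',GL(t,\mathcal{O}_{S'}))$) with an element of $G_t(S')$, i.e.\ with a rank-$t$ locally free quotient of $(u',u)^*\mathcal{E}xt^1_{\pi_S}((\mathcal{E}_2,\mathcal{V}_2),(\mathcal{E}_1,\mathcal{V}_1))^{\vee}$. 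Since $Q_t$ represents $G_t$, there is a unique morphism $\psi\colon S'\to Q_t$ of $S$-schemes carrying the universal quotient to this one; in particular $\psi^*\overline{\mathcal{M}}_t\simeq\mathcal{M}$, and by the same chain of canonical identifications used to build $\{e_q\}_{q\in Q_t}$ the pullback of $\{e_q\}_{q\in Q_t}$ along $\psi$ is $\{e_{s'}\}_{s'\in S'}$. Uniqueness of $\psi$ is the uniqueness in the representability statement of Proposition \ref{49}.

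The hard part will be purely the bookkeeping of the three-way dictionary ``rank-$t$ locally free quotient of $\mathcal{E}xt^{1\,\vee}$'' $\longleftrightarrow$ ``nowhere-vanishing section of $\mathcal{E}xt^1\otimes\mathcal{M}$ modulo $GL(t)$'' $\longleftrightarrow$ ``non-degenerate family of rank-$t$ extensions on the left'': one must check that the $\operatorname{H}^0(S',GL(t,\mathcal{O}_{S'}))$-quotient appearing in Lemma \ref{51} is accounted for precisely by the choice of the isomorphism $\psi^*\overline{\mathcal{M}}_t\simeq\mathcal{M}$, that the fiberwise non-degeneracy of Definition \ref{62} corresponds exactly to fiberwise injectivity of the associated subbundle map, and that the reducedness hypothesis of Proposition \ref{35} really does propagate to every base occurring in the argument (notably $Q_t$ and every reduced $S$-scheme $S'$). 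Once this dictionary is in place the argument is formally the same as for Corollary \ref{40}, which is why the paper omits the details.
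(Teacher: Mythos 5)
Your proposal is correct and follows exactly the route the paper intends: the paper omits this proof precisely because it is obtained by modelling the argument of Corollaries \ref{40} and \ref{41} on Proposition \ref{49} (representability of $G_t$ by $Q_t$) combined with Lemma \ref{51} (the dictionary between rank-$t$ locally free quotients, sections modulo $\operatorname{H}^0(S',GL(t,\mathcal{O}_{S'}))$, and non-degenerate families), which is what you do. Your added care about reducedness of $Q_t$ and of the test schemes, and about the fiberwise identification via base change, fills in the bookkeeping the paper leaves implicit.
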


Here ``universal'' means the following: given any reduced $S$-scheme $u:S'\rightarrow S$, any locally free sheaf
$\mathcal{M}$ of rank $t$ on $S'$ and any class of a family $\{e_{s'}\}_{s'\in S'}$ of non-degenerate extensions of
rank $t$ on the left of $(u',u)^*(\mathcal{E}_2,\mathcal{V}_2)$ by $(u',u)^*(\mathcal{E}_1,\mathcal{V}_1)\otimes_{S'}
\mathcal{M}$, then there is a unique morphism of $S$-schemes $\psi:S'\rightarrow Q_t$, such that the family
$\{e_{s'}\}_{s'\in S'}$ is the pullback of $\{e_q\}_{q\in Q_t}$ via $\psi$, modulo the canonical operation of
$\operatorname{H}^0(S',GL(t,\mathcal{O}_{S'}))$. The relevant diagram to consider is

\[\begin{tikzpicture}[xscale=1.5,yscale=-1.2]
    \node (A0_2) at (2, 0.6) {$\curvearrowright$};
    \node (A1_0) at (0, 1) {$C\times S'$};
    \node (A1_2) at (2, 1) {$C\times Q_t$};
    \node (A1_4) at (4, 1) {$C\times S$};
    \node (A2_1) at (1, 2) {$\square$};
    \node (A2_3) at (3, 2) {$\square$};
    \node (A3_0) at (0, 3) {$S'$};
    \node (A3_2) at (2, 3) {$Q_t$};
    \node (A3_4) at (4, 3) {$S.$};
    \node (A4_2) at (2, 3.4) {$\curvearrowright$};
    \path (A1_4) edge [->]node [auto] {$\scriptstyle{\pi_S}$} (A3_4);
    \path (A1_0) edge [->,bend right=35]node [auto] {$\scriptstyle{u'}$} (A1_4);
    \path (A3_0) edge [->]node [auto] {$\scriptstyle{\psi}$} (A3_2);
    \path (A1_0) edge [->]node [auto] {$\scriptstyle{\psi'}$} (A1_2);
    \path (A3_0) edge [->,bend left=35,swap]node [auto] {$\scriptstyle{u}$} (A3_4);
    \path (A1_0) edge [->]node [auto] {$\scriptstyle{\pi_{S'}}$} (A3_0);
    \path (A1_2) edge [->]node [auto] {$\scriptstyle{\theta_t'}$} (A1_4);
    \path (A1_2) edge [->]node [auto] {$\scriptstyle{\pi_{Q_t}}$} (A3_2);
    \path (A3_2) edge [->]node [auto] {$\scriptstyle{\theta_t}$} (A3_4);
\end{tikzpicture}\]

\begin{corollary}\label{53}
Let us fix any $t\geq 1$, let us suppose that $\operatorname{Hom}((\mathcal{E}_2,\mathcal{V}_2)_s,
(\mathcal{E}_1,\mathcal{V}_1)_s)=0$ for all $s\in S$ and that $\mathcal{E}xt^1_{\pi_S}((\mathcal{E}_2,\mathcal{V}_2),
(\mathcal{E}_1,\mathcal{V}_1))$ commutes with base change. Then there is a family $(\mathcal{E}_{Q_t},
\mathcal{V}_{Q_t})$ parametrized by $Q_t$ and a non-degenerate extension of rank $t$ on the left

\begin{equation}\label{4-46}
0\rightarrow(\theta'_t,\theta_t)^*(\mathcal{E}_1,\mathcal{V}_1)\otimes_{Q_t}\overline{\mathcal{M}}_t\rightarrow
(\mathcal{E}_{Q_t},\mathcal{V}_{Q_t})\rightarrow(\theta'_t,\theta_t)^*(\mathcal{E}_2,\mathcal{V}_2)\rightarrow 0
\end{equation}
that is \emph{universal} on the category of $S$-schemes.
\end{corollary}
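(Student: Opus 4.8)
The plan is to follow the proof of corollary \ref{41} almost verbatim, replacing the vector bundle $V$ and the functor $F$ by the Grassmannian bundle $Q_t$ and the functor $G_t$, and carrying along an auxiliary twist by the universal rank-$t$ quotient $\overline{\mathcal{M}}_t$. The first task is to check that proposition \ref{49} and lemma \ref{51} apply. The hypothesis $\operatorname{Hom}((\mathcal{E}_2,\mathcal{V}_2)_s,(\mathcal{E}_1,\mathcal{V}_1)_s)=0$ for all $s\in S$ makes $\tau^0(s)$ surjective for every $s$ (its target is $0$), hence an isomorphism by proposition \ref{27} (i); thus the coherent sheaf $\mathcal{H}om_{\pi_S}((\mathcal{E}_2,\mathcal{V}_2),(\mathcal{E}_1,\mathcal{V}_1))$ has all fibres zero and therefore vanishes, so it trivially commutes with base change. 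Together with the standing hypothesis in degree $1$ this gives base change for $i=0,1$, so $G_t$ is represented by $\theta_t\colon Q_t=\operatorname{Grass}\big(t,\hat E^\vee\big)\to S$ with $\hat E:=\mathcal{E}xt^1_{\pi_S}((\mathcal{E}_2,\mathcal{V}_2),(\mathcal{E}_1,\mathcal{V}_1))$, and for every $S$-scheme $u\colon S'\to S$ the set $G_t(S')$ is the set of families of non-degenerate extensions of rank $t$ on the left of $(u',u)^*(\mathcal{E}_2,\mathcal{V}_2)$ by $(u',u)^*(\mathcal{E}_1,\mathcal{V}_1)\otimes_{S'}\mathcal{M}$, for varying locally free $\mathcal{M}$ of rank $t$ on $S'$, modulo the action of $\operatorname{H}^0(S',GL(t,\mathcal{O}_{S'}))$.

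The crucial step is to upgrade a family of extension \emph{classes} into an honest short exact sequence. For this I would apply lemma \ref{29} (a): since the $\operatorname{Hom}$-vanishing holds on $S$, for \emph{every} $S$-scheme $u\colon S'\to S$ and every locally free $\mathcal{M}$ of rank $t$ on $S'$ the canonical map $\mu$ is an isomorphism
$$\mu\colon\operatorname{Ext}^1_{S'}\Big((u',u)^*(\mathcal{E}_2,\mathcal{V}_2),(u',u)^*(\mathcal{E}_1,\mathcal{V}_1)\otimes_{S'}\mathcal{M}\Big)\ \xrightarrow{\ \sim\ }\ \operatorname{H}^0\Big(S',\mathcal{E}xt^1_{\pi_{S'}}\big((u',u)^*(\mathcal{E}_2,\mathcal{V}_2),(u',u)^*(\mathcal{E}_1,\mathcal{V}_1)\big)\otimes_{S'}\mathcal{M}\Big)$$
where I have used lemma \ref{28} to pull $\mathcal{M}$ out of the $\mathcal{E}xt^1$; and by lemma \ref{32} the source of $\mu$ is canonically the set of equivalence classes of short exact sequences $0\to(u',u)^*(\mathcal{E}_1,\mathcal{V}_1)\otimes_{S'}\mathcal{M}\to(\mathcal{E}_{S'},\mathcal{V}_{S'})\to(u',u)^*(\mathcal{E}_2,\mathcal{V}_2)\to 0$. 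Applying this with $S'=Q_t$ and $\mathcal{M}=\overline{\mathcal{M}}_t$, the universal element of $G_t(Q_t)$ --- which, after dualizing the universal quotient and twisting by $\overline{\mathcal{M}}_t$, becomes a nowhere-vanishing global section of $\mathcal{E}xt^1_{\pi_{Q_t}}((\theta'_t,\theta_t)^*(\mathcal{E}_2,\mathcal{V}_2),(\theta'_t,\theta_t)^*(\mathcal{E}_1,\mathcal{V}_1))\otimes_{Q_t}\overline{\mathcal{M}}_t$ --- goes, via $\mu^{-1}$ and a choice of representative, to the desired sequence (\ref{4-46}); its middle term $(\mathcal{E}_{Q_t},\mathcal{V}_{Q_t})$ is a genuine family of coherent systems by lemma \ref{32}. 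Since the section is nowhere vanishing, the components of its restriction $\xi_q\in H_q^{\oplus_t}$ are linearly independent for every $q\in Q_t$, i.e. (\ref{4-46}) is non-degenerate of rank $t$ on the left in the sense of definition \ref{62}.

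For the universal property I would run the same chain of canonical identifications backwards. Given an $S$-scheme $u\colon S'\to S$, a locally free $\mathcal{M}$ of rank $t$ on $S'$ and a non-degenerate rank-$t$-on-the-left extension of $(u',u)^*(\mathcal{E}_2,\mathcal{V}_2)$ by $(u',u)^*(\mathcal{E}_1,\mathcal{V}_1)\otimes_{S'}\mathcal{M}$, lemma \ref{32}, the isomorphism $\mu$ and lemma \ref{28} turn it into a nowhere-vanishing section of $\mathcal{E}xt^1_{\pi_{S'}}((u',u)^*(\mathcal{E}_2,\mathcal{V}_2),(u',u)^*(\mathcal{E}_1,\mathcal{V}_1))\otimes_{S'}\mathcal{M}$, equivalently a locally free rank-$t$ quotient of the dual of that $\mathcal{E}xt^1$, i.e. an element of $G_t(S')$; by proposition \ref{49} this corresponds to a unique $S$-morphism $\psi\colon S'\to Q_t$ with $\psi^*\overline{\mathcal{M}}_t\simeq\mathcal{M}$ compatibly with the quotient maps. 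Because $\mu$, the base change maps $\tau^1$, lemma \ref{28} and lemma \ref{32} are all canonical and compatible with pullback along $\psi$, the pullback of (\ref{4-46}) along $\psi$ is equivalent to the given extension (up to the $GL(t)$-action on the coefficient module), and $\psi$ is the unique $S$-morphism with this property. The main obstacle is precisely the upgrade carried out in the second paragraph: one must make sure that $\mu$ is an isomorphism over \emph{every} $S$-scheme --- which is where the $\operatorname{Hom}$-vanishing hypothesis is indispensable, and which is exactly what lets one drop the ``$S$ reduced'' hypothesis of corollaries \ref{40} and \ref{52} --- and that the passage between a global section of $\mathcal{E}xt^1\otimes\mathcal{M}$ and an honest sequence is functorial in $S'$; everything else is the routine diagram chase already performed for corollary \ref{41}.
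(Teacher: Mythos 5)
Your proposal is correct and follows exactly the route the paper intends: the paper omits the proof of corollary \ref{53}, stating only that it is modelled on corollaries \ref{40} and \ref{41} together with lemma \ref{51} and proposition \ref{49}, and your argument fills in precisely those details (proposition \ref{49} for representability of $G_t$, lemmas \ref{28}, \ref{29}(a) and \ref{32} to turn the universal section into an honest non-degenerate extension over an arbitrary, not necessarily reduced, base). In particular you correctly identify that the $\operatorname{Hom}$-vanishing hypothesis is what makes $\mu$ an isomorphism over every $S$-scheme and thus replaces the reducedness assumption of corollary \ref{52}, which is the same mechanism by which corollary \ref{41} strengthens corollary \ref{40}.
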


Here ``universal'' means the following: let us fix any morphism $u:S'\rightarrow S$, any locally free sheaf
$\mathcal{M}$ of rank $t$ on $S'$ and any non-degenerate extension of rank $t$ on the left:

\begin{equation}\label{4-47}
0\rightarrow (u',u)^*(\mathcal{E}_1,\mathcal{V}_1)\otimes_{S'}\mathcal{M}\rightarrow(\mathcal{E}_S,\mathcal{V}_S)
\rightarrow(u',u)^*(\mathcal{E}_2,\mathcal{V}_2)\rightarrow 0.
\end{equation}

Then there is a unique morphism of $S$-schemes $\psi:S'\rightarrow Q_t$ such that (\ref{4-47}) is the pullback of
(\ref{4-46}) via $\psi$, modulo the canonical operation of $\operatorname{H}^0(S',GL(t,\mathcal{O}_{S'}))$.\\

Analogously, using the second part of lemma \ref{28} we can prove the following results.

\begin{corollary}\label{4-48}
Let us fix any $t\geq 1$, let us suppose that $S$ is \emph{reduced} and that $\mathcal{E}xt^i_{\pi_S}
((\mathcal{E}_2,\mathcal{V}_2),$ $(\mathcal{E}_1,\mathcal{V}_1))$ commutes with base change for $i=0,1$. Then there is
a family of non-degenerate extensions of rank $t$ on the right $\{e'_q\}_{q\in Q_t}$ of $(\theta'_t,\theta_t)^*
(\mathcal{E}_2,\mathcal{V}_2)$ $\otimes_{Q_t}\overline{\mathcal{M}}_t^{\vee}$ by $(\theta'_t,\theta_t)^*
(\mathcal{E}_1,\mathcal{V}_1)$, which is universal for families of non-degenerate extensions of rank $t$ on the right,
analogously to corollary \ref{52}.
\end{corollary}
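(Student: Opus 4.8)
The plan is to mirror the argument that produced Corollary \ref{52}, replacing the first canonical isomorphism of Lemma \ref{28} (and of Lemma \ref{20}) by the second one (and by Lemma \ref{25}) at every step. First I would note that the functor $G_t$ of Proposition \ref{49} depends only on the locally free sheaf $\hat{E}^\vee:=\mathcal{E}xt^1_{\pi_S}((\mathcal{E}_2,\mathcal{V}_2),(\mathcal{E}_1,\mathcal{V}_1))^\vee$ on $S$, so it is still represented by the same relative Grassmannian $\theta_t:Q_t=\operatorname{Grass}(t,\hat{E}^\vee)\to S$, carrying the same universal rank-$t$ quotient $q:(\theta'_t,\theta_t)^*\hat{E}^\vee\to\overline{\mathcal{M}}_t\to 0$; only the geometric meaning of its $S'$-points will change.

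Next I would establish the right-handed analogue of Lemma \ref{51}. Exactly as in its proof, for any $S$-scheme $u:S'\to S$ the set $G_t(S')$ is identified with the set of nowhere-vanishing global sections of the sheaves $\mathcal{E}xt^1_{\pi_{S'}}((u',u)^*(\mathcal{E}_2,\mathcal{V}_2),(u',u)^*(\mathcal{E}_1,\mathcal{V}_1))\otimes_{S'}\mathcal{M}$, with $\mathcal{M}$ locally free of rank $t$ on $S'$, modulo $\operatorname{H}^0(S',GL(t,\mathcal{O}_{S'}))$. At this point, instead of the first isomorphism of Lemma \ref{28}, I would apply the second one over $S'$,
\[\mathcal{E}xt^1_{\pi_{S'}}\Big((u',u)^*(\mathcal{E}_2,\mathcal{V}_2),(u',u)^*(\mathcal{E}_1,\mathcal{V}_1)\Big)\otimes_{S'}\mathcal{M}\;\simeq\;\mathcal{E}xt^1_{\pi_{S'}}\Big((u',u)^*(\mathcal{E}_2,\mathcal{V}_2)\otimes_{S'}\mathcal{M}^\vee,(u',u)^*(\mathcal{E}_1,\mathcal{V}_1)\Big),\]
and then Proposition \ref{35} to the pair $\big((u',u)^*(\mathcal{E}_2,\mathcal{V}_2)\otimes_{S'}\mathcal{M}^\vee,\,(u',u)^*(\mathcal{E}_1,\mathcal{V}_1)\big)$, whose relevant $\mathcal{E}xt^i_{\pi_{S'}}$ for $i=0,1$ still commutes with base change (the pullback to $S'$ of an $\mathcal{E}xt^i_{\pi_S}$ commuting with base change does so, and tensoring by the locally free $\mathcal{M}^\vee$ preserves this by Lemma \ref{28}). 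This would identify a nowhere-vanishing section with a globally defined family of extensions of $(u',u)^*(\mathcal{E}_2,\mathcal{V}_2)\otimes_{S'}\mathcal{M}^\vee$ by $(u',u)^*(\mathcal{E}_1,\mathcal{V}_1)$, and the nowhere-vanishing condition would translate fibre by fibre into the linear independence of the $t$ components of the extension class, i.e. into non-degeneracy of rank $t$ on the right in the sense of Definition \ref{62}.

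Then I would conclude as in Corollary \ref{52}: since $S$ is reduced and the base-change hypotheses persist after every pullback, transporting the universal quotient $q$ through the identifications above produces the family $\{e'_q\}_{q\in Q_t}$ of non-degenerate rank-$t$ extensions on the right of $(\theta'_t,\theta_t)^*(\mathcal{E}_2,\mathcal{V}_2)\otimes_{Q_t}\overline{\mathcal{M}}_t^\vee$ by $(\theta'_t,\theta_t)^*(\mathcal{E}_1,\mathcal{V}_1)$, and its universal property on reduced $S$-schemes is precisely the representability of $G_t$ by $Q_t$. The step I expect to be the main obstacle is the fibrewise translation claimed at the end of the previous paragraph: one has to check that, under the specialization to a point $r\in S'$ of the second isomorphism of Lemma \ref{28} (equivalently, through Lemma \ref{25}), the canonical identification $H_r\otimes\mathcal{M}_r\cong\operatorname{Ext}^1\big((\mathcal{E}_{2,r},\mathcal{V}_{2,r})\otimes\mathcal{M}_r^\vee,(\mathcal{E}_{1,r},\mathcal{V}_{1,r})\big)$ sends a nowhere-zero decomposable section to a tuple of linearly independent classes; the bookkeeping here is a little more delicate than in Corollary \ref{52} because the twisting factor $\mathcal{M}^\vee$ now sits in the first, contravariant, argument of $\operatorname{Ext}^1$, so the pairing realizing ``a section of $A\otimes\mathcal{M}$ is a morphism $\mathcal{M}^\vee\to A$'' must be matched against the contravariance of $\operatorname{Ext}^1$ in that slot.
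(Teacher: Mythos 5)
Your proposal is correct and follows essentially the same route as the paper, which disposes of this corollary with the single remark ``Analogously, using the second part of lemma \ref{28} we can prove the following results'': you simply spell out that remark, replacing the first isomorphism of Lemma \ref{28} (and Lemma \ref{20}) by the second one (Lemma \ref{25}) in the chain Lemma \ref{51} $\rightarrow$ Proposition \ref{49} $\rightarrow$ Corollary \ref{52}. The extra care you flag about the fibrewise translation through the contravariant twist by $\mathcal{M}^\vee$ is a reasonable precaution but does not constitute a different argument.
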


\begin{corollary}\label{4-49}
Let us fix any $t\geq 1$, let us suppose that $\operatorname{Hom}((\mathcal{E}_2,\mathcal{V}_2)_s,
(\mathcal{E}_1,\mathcal{V}_1)_s)=0$ for all $s\in S$ and that $\mathcal{E}xt^1_{\pi_S}((\mathcal{E}_2,\mathcal{V}_2),
(\mathcal{E}_1,\mathcal{V}_1))$commutes with base change. Then there is a family $(\mathcal{E}'_{Q_t},
\mathcal{V}'_{Q_t})$ parametrized by $Q_t$ and a non-degenerate extension on the right of rank $t$

\begin{equation}\label{4-50}
0\rightarrow(\theta'_t,\theta_t)^*(\mathcal{E}_1,\mathcal{V}_1)\rightarrow (\mathcal{E}'_{Q_t},\mathcal{V}'_{Q_t})
\rightarrow(\theta'_t,\theta_t)^*(\mathcal{E}_2,\mathcal{V}_2)\otimes_{Q_t}\overline{\mathcal{M}}^{\vee}_t\rightarrow 0
\end{equation}
that is universal for non-degenerate extensions of rank $t$ on the right, analogously to corollary \ref{53}.
\end{corollary}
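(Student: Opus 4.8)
The plan is to transport the proof of Corollary~\ref{53} to the ``right-handed'' situation, exactly as Corollary~\ref{4-48} is the right-handed counterpart of Corollary~\ref{52}, replacing throughout the first isomorphism of Lemma~\ref{28} by its second one. First I would observe that the hypothesis $\operatorname{Hom}((\mathcal{E}_2,\mathcal{V}_2)_s,(\mathcal{E}_1,\mathcal{V}_1)_s)=0$ for all $s\in S$ forces, by cohomology and base change (the argument already used in Corollary~\ref{41}), the vanishing $\mathcal{H}om_{\pi_S}((\mathcal{E}_2,\mathcal{V}_2),(\mathcal{E}_1,\mathcal{V}_1))=0$; in particular $\mathcal{E}xt^i_{\pi_S}((\mathcal{E}_2,\mathcal{V}_2),(\mathcal{E}_1,\mathcal{V}_1))$ commutes with base change for $i=0$ as well as for $i=1$, so Proposition~\ref{49} applies: $G_t$ is represented by $\theta_t\colon Q_t\to S$, with a universal locally free quotient $q\colon\mathcal{E}xt^1_{\pi_{Q_t}}((\theta'_t,\theta_t)^*(\mathcal{E}_2,\mathcal{V}_2),(\theta'_t,\theta_t)^*(\mathcal{E}_1,\mathcal{V}_1))^\vee\longrightarrow\overline{\mathcal{M}}_t\longrightarrow 0$ of rank $t$.

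Next I would build the sequence~(\ref{4-50}). Dualizing $q$ gives an inclusion $\overline{\mathcal{M}}_t^\vee\hookrightarrow\mathcal{E}xt^1_{\pi_{Q_t}}((\theta'_t,\theta_t)^*(\mathcal{E}_2,\mathcal{V}_2),(\theta'_t,\theta_t)^*(\mathcal{E}_1,\mathcal{V}_1))$, that is, a global section of the tensor product of that $\mathcal{E}xt^1$-sheaf with $\overline{\mathcal{M}}_t$; by the second isomorphism of Lemma~\ref{28} applied over $Q_t$ with $\mathcal{M}=\overline{\mathcal{M}}_t$, this sheaf is canonically $\mathcal{E}xt^1_{\pi_{Q_t}}((\theta'_t,\theta_t)^*(\mathcal{E}_2,\mathcal{V}_2)\otimes_{Q_t}\overline{\mathcal{M}}_t^\vee,(\theta'_t,\theta_t)^*(\mathcal{E}_1,\mathcal{V}_1))$. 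Since the fibres of $\theta_t$ over $S$ are Grassmannians, every point of $Q_t$ lies over some $s\in S$, and there the restriction of $(\theta'_t,\theta_t)^*(\mathcal{E}_2,\mathcal{V}_2)\otimes_{Q_t}\overline{\mathcal{M}}_t^\vee$ is $(\mathcal{E}_2,\mathcal{V}_2)_s^{\oplus t}$; hence $\operatorname{Hom}$ still vanishes on every fibre of $Q_t$ for this pair of families, so Lemma~\ref{29}(a) makes the canonical map $\mu$ an isomorphism on $Q_t$. Carrying the above global section through $\mu^{-1}$ and then through Lemma~\ref{32}, I obtain the short exact sequence~(\ref{4-50}); it is non-degenerate of rank $t$ on the right because over each point the $t$ components of the extension class span precisely the $t$-dimensional subspace of $\operatorname{Ext}^1((\mathcal{E}_2,\mathcal{V}_2)_s,(\mathcal{E}_1,\mathcal{V}_1)_s)$ determined by the corresponding Grassmannian point.

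For the universal property, fix an $S$-scheme $u\colon S'\to S$, a locally free sheaf $\mathcal{M}$ of rank $t$ on $S'$ and a non-degenerate extension of rank $t$ on the right as in~(\ref{4-47}). By Lemma~\ref{32} it is a class in $\operatorname{Ext}^1_{S'}((u',u)^*(\mathcal{E}_2,\mathcal{V}_2)\otimes_{S'}\mathcal{M}^\vee,(u',u)^*(\mathcal{E}_1,\mathcal{V}_1))$; since $\operatorname{Hom}$ vanishes on every fibre of $S'$, Lemma~\ref{29}(a) together with the second isomorphism of Lemma~\ref{28} identifies this class with a global section of $\mathcal{E}xt^1_{\pi_{S'}}((u',u)^*(\mathcal{E}_2,\mathcal{V}_2),(u',u)^*(\mathcal{E}_1,\mathcal{V}_1))\otimes_{S'}\mathcal{M}$, which equals $u^*\widehat{E}\otimes_{S'}\mathcal{M}$ by base change for $i=1$, where $\widehat{E}:=\mathcal{E}xt^1_{\pi_S}((\mathcal{E}_2,\mathcal{V}_2),(\mathcal{E}_1,\mathcal{V}_1))$. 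The non-degeneracy of~(\ref{4-47}) is exactly the condition that the dual morphism $u^*\widehat{E}^\vee\to\mathcal{M}^\vee$ be surjective, i.e.\ that this datum be an object of $G_t(S')$; Proposition~\ref{49} then yields the unique morphism $\psi\colon S'\to Q_t$ of $S$-schemes pulling the universal quotient of $Q_t$ back to it, and unwinding the canonical identifications shows that~(\ref{4-47}) is the pullback of~(\ref{4-50}) along $\psi$, modulo the action of $\operatorname{H}^0(S',GL(t,\mathcal{O}_{S'}))$ coming from the choice of an identification of $\mathcal{M}$ with the pullback of $\overline{\mathcal{M}}_t$; uniqueness of $\psi$ follows from uniqueness in the Grassmannian functor and the canonicity of $\mu$ and of Lemma~\ref{28}.

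The step I expect to be the main obstacle is the bookkeeping of the various dualities and of the $GL(t)$-ambiguity, that is, checking carefully the right-handed analogue of Lemma~\ref{51}: that ``non-degenerate of rank $t$ on the right'', modulo $\operatorname{H}^0(S',GL(t,\mathcal{O}_{S'}))$, corresponds exactly to a rank-$t$ locally free quotient of the dualized $\mathcal{E}xt^1$-sheaf, and that Lemma~\ref{29}(a) is legitimately applicable both on $Q_t$ and on $S'$ --- which it is, since the $\operatorname{Hom}$-vanishing hypothesis is inherited after tensoring with a rank-$t$ locally free sheaf and restricting to a point. Once these points are settled, what remains is the purely formal transport of the proof of Corollary~\ref{53} through the second isomorphism of Lemma~\ref{28}.
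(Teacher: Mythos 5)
Your proposal is correct and takes exactly the route the paper intends: the paper gives no separate proof of this corollary, saying only that it follows from the proof of corollary \ref{53} (itself modelled on corollary \ref{41}, lemma \ref{51} and proposition \ref{49}) by invoking the second isomorphism of lemma \ref{28}, which is precisely the transport you carry out, including the correct observation that fibrewise $\operatorname{Hom}$-vanishing forces $\mathcal{H}om_{\pi_S}=0$ so that base change for $i=0$ is automatic. The only blemishes are cosmetic: the test datum should be the right-handed analogue of (\ref{4-47}) (with the rank-$t$ twist on $(\mathcal{E}_2,\mathcal{V}_2)$ rather than on $(\mathcal{E}_1,\mathcal{V}_1)$), and with your conventions non-degeneracy makes $u^*\hat{E}^\vee\rightarrow\mathcal{M}$ (not $\mathcal{M}^\vee$) the rank-$t$ locally free quotient defining the point of $G_t(S')$ --- a harmless relabelling of $\mathcal{M}$ by its dual.
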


\section{Universal families of non-split extensions}
If we fix $t=1$ in the previous section and we simplify the notations by setting $P:=Q_1$ and $\varphi:=\theta_1$, we
get:

\begin{corollary}
Let us suppose that $\mathcal{E}xt^i_{\pi_S}((\mathcal{E}_2,\mathcal{V}_2),(\mathcal{E}_1,\mathcal{V}_1))$ commutes
with base change for $i=0,1$. Then the functor $F_1$ is represented by the projective bundle 

$$P:=\mathbb{P}\Big(\mathcal{E}xt^1_{\pi_S}\Big((\mathcal{E}_2,\mathcal{V}_2),(\mathcal{E}_1,\mathcal{V}_1)\Big)^\vee
\Big)\stackrel{\varphi}{\longrightarrow} S$$
associated to the locally free sheaf $\mathcal{E}xt^1_{\pi_S}((\mathcal{E}_2,\mathcal{V}_2),
(\mathcal{E}_1,\mathcal{V}_1))^\vee$ on $S$. The fiber of $\varphi$ over any point $s$ is canonically identified with
$\mathbb{P}(\operatorname{Ext}^1((\mathcal{E}_2,\mathcal{V}_2)_s,(\mathcal{E}_1,\mathcal{V}_1)_s))$.
\end{corollary}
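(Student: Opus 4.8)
The plan is to deduce this statement directly from Proposition \ref{49} by specializing to $t=1$. First I would note that the functor $F_1$ appearing in the statement is the functor $G_1$, that is $G_t$ for $t=1$: for every $S$-scheme $u\colon S'\rightarrow S$, $F_1(S')=G_1(S')$ is the set of locally free rank-$1$ quotients of $\mathcal{E}xt^1_{\pi_{S'}}((u',u)^*(\mathcal{E}_2,\mathcal{V}_2),(u',u)^*(\mathcal{E}_1,\mathcal{V}_1))^\vee$, with functoriality in $v\colon S''\rightarrow S'$ furnished by base change for $i=1$ exactly as set up before Proposition \ref{49}.

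Next I would recall that, with the conventions used here, the relative Grassmannian $\operatorname{Grass}(1,\mathcal{F})$ of locally free rank-$1$ quotients of a locally free sheaf $\mathcal{F}$ on $S$ coincides with the projective bundle $\mathbb{P}(\mathcal{F})$, and that the universal rank-$1$ quotient on the former corresponds to the tautological quotient on the latter. Writing $\hat E:=\mathcal{E}xt^1_{\pi_S}((\mathcal{E}_2,\mathcal{V}_2),(\mathcal{E}_1,\mathcal{V}_1))$ --- which is locally free since base change holds for $i=0$ and $i=1$, so in particular $\hat E^{\vee\vee}=\hat E$ --- this identification turns $Q_1=\operatorname{Grass}(1,\hat E^\vee)$ into $P=\mathbb{P}(\hat E^\vee)$ and $\theta_1$ into $\varphi$. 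Proposition \ref{49} then says that $G_1=F_1$ is represented by $Q_1=P$; and since the fiber of $\theta_t$ over a point $s$ is canonically $\operatorname{Grass}(t,\operatorname{Ext}^1((\mathcal{E}_2,\mathcal{V}_2)_s,(\mathcal{E}_1,\mathcal{V}_1)_s))$, for $t=1$ this becomes canonically $\mathbb{P}(\operatorname{Ext}^1((\mathcal{E}_2,\mathcal{V}_2)_s,(\mathcal{E}_1,\mathcal{V}_1)_s))$, which is the asserted fiber description.

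There is no real obstacle here: the corollary is essentially a verbatim restatement of the $t=1$ case of Proposition \ref{49}. The only point worth a line of care is to make sure that the two notations $\mathbb{P}(-)$ and $\operatorname{Grass}(1,-)$ are being used consistently (both in the Grothendieck sense of parametrizing rank-$1$ quotients), so that no unintended dualization creeps in when passing from $Q_1$ to $P$; once that is checked, nothing further is needed.
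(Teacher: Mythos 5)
Your proposal is correct and matches the paper exactly: the paper gives no separate proof, introducing the corollary simply as the case $t=1$ of Proposition \ref{49} with $P:=Q_1$, $\varphi:=\theta_1$, so that $\operatorname{Grass}(1,\hat E^\vee)=\mathbb{P}(\hat E^\vee)$ and the fiber description follows. Your remark that $F_1$ is just $G_1$ (the paper's notation being slightly inconsistent) and the caution about the Grothendieck convention for $\mathbb{P}(-)$ are exactly the right points and require nothing further.
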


\begin{remark}
The universal element of $F_1(P)$ is constructed in the following way. We consider the canonical isomorphisms:

\begin{eqnarray*}
& \operatorname{H}^0(S,\textrm{End} \hat{E})=\operatorname{H}^0(S,\hat{E}\otimes \hat{E}^\vee)=\operatorname{H}^0(S,
  \hat{E}\otimes\varphi_*\mathcal{O}_P(1))= &\\
& =\operatorname{H}^0\Big(S,\varphi_*\Big(\varphi^*\hat{E}\otimes\mathcal{O}_P(1)\Big)\Big)=
  \operatorname{H}^0(P,\varphi^*\hat{E}\otimes_P\mathcal{O}_P(1)). &
\end{eqnarray*}

Then we consider the image of the identity of $\hat{E}$ under this series of isomorphisms and we get that this is
a non-vanishing section of $\varphi^*\hat{E}\otimes_P\mathcal{O}_P(1)$. Using base change for $i=1$, this gives a
non-vanishing section of

$$\mathcal{E}xt^1_{\pi_P}\Big((\varphi',\varphi)^*(\mathcal{E}_2,\mathcal{V}_2),
(\varphi',\varphi)^*(\mathcal{E}_1,\mathcal{V}_1)\Big)\otimes_P\mathcal{O}_P(1),$$
so it defines a quotient:

$$\mathcal{E}xt^1_{\pi_P}\Big((\varphi',\varphi)^*(\mathcal{E}_2,\mathcal{V}_2),(\varphi,\varphi)^*(\mathcal{E}_1,
\mathcal{V}_1)\Big)^\vee\longrightarrow\mathcal{O}_P(1)\longrightarrow 0.$$

This is the universal object of the functor $F_1$.
\end{remark}

The notion of (family of) non-degenerate extension(s) (either on the left or on the right) of rank $t=1$ coincides
with the notion of (family of) non-split extension(s). Therefore we get the following corollaries.

\begin{corollary}\label{44}
Let us suppose that $S$ is \emph{reduced} and that $\mathcal{E}xt^i_{\pi_S}((\mathcal{E}_2,\mathcal{V}_2),$
$(\mathcal{E}_1,\mathcal{V}_1))$ commutes with base change for $i=0,1$. Then there is a family of non-split extensions
$\{e_p\}_{p\in P}$ of $(\varphi',\varphi)^*(\mathcal{E}_2,\mathcal{V}_2)$ by $(\varphi',\varphi)^*(\mathcal{E}_1,
\mathcal{V}_1)\otimes_P\mathcal{O}_P(1)$, which is universal on the category of reduced $S$-schemes.
\end{corollary}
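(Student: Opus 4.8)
The plan is to recognize this as the $t=1$ instance of Corollary \ref{52}: by definition $P$, $\varphi$ and $\mathcal{O}_P(1)$ are $Q_1$, $\theta_1$ and $\overline{\mathcal{M}}_1$, and --- as noted just before the statement --- a (family of) non-degenerate extension(s) of rank $1$ on the left is precisely a (family of) non-split extension(s). Since Corollary \ref{52} was only sketched, I would spell out the argument along the lines of the proofs of corollaries \ref{40} and \ref{41}, with lemma \ref{51} and proposition \ref{49} playing the role that lemma \ref{29}(a) and proposition \ref{39} play there.

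First I would use proposition \ref{49} with $t=1$: the functor $G_1=F_1$ is represented by $\varphi:P\rightarrow S$, and its universal object over $P$ is the nowhere vanishing global section
$$\sigma\in\operatorname{H}^0\Big(P,\mathcal{E}xt^1_{\pi_P}\Big((\varphi',\varphi)^*(\mathcal{E}_2,\mathcal{V}_2),(\varphi',\varphi)^*(\mathcal{E}_1,\mathcal{V}_1)\Big)\otimes_P\mathcal{O}_P(1)\Big)$$
obtained as the image of the identity of $\hat{E}$, as described in the remark above. I then have to produce from $\sigma$ a genuine \emph{family of extensions}. To do this I note that $P$ is reduced (since $S$ is reduced and $\varphi$ is a projective bundle, in particular smooth) and that $\mathcal{E}xt^1_{\pi_P}((\varphi',\varphi)^*(\mathcal{E}_2,\mathcal{V}_2),(\varphi',\varphi)^*(\mathcal{E}_1,\mathcal{V}_1)\otimes_P\mathcal{O}_P(1))$ commutes with base change: indeed $\mathcal{E}xt^1_{\pi_S}((\mathcal{E}_2,\mathcal{V}_2),(\mathcal{E}_1,\mathcal{V}_1))$ does by hypothesis, this property passes to its pullback over $P$ (as used repeatedly in sections 5 and 6) and to the twist by the line bundle $\mathcal{O}_P(1)$ by lemma \ref{28}. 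Hence proposition \ref{35} applies over $P$ to the pair $(\varphi',\varphi)^*(\mathcal{E}_2,\mathcal{V}_2)$, $(\varphi',\varphi)^*(\mathcal{E}_1,\mathcal{V}_1)\otimes_P\mathcal{O}_P(1)$ and identifies the $\operatorname{H}^0$ above with $\operatorname{EXT}$ of these two families, so $\sigma$ yields a family $\{e_p\}_{p\in P}$. Finally, because $\sigma$ is nowhere vanishing and $\tau^1(p)$ is an isomorphism for every $p$ (base change for $i=1$), each class $e_p$ is nonzero, i.e. the extension $e_p$ is non-split, which gives the required family of non-split extensions.

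For the universal property I would take a reduced $S$-scheme $u:S'\rightarrow S$, a line bundle $\mathcal{M}$ on $S'$ and a family $\{e_{s'}\}_{s'\in S'}$ of non-split extensions of $(u',u)^*(\mathcal{E}_2,\mathcal{V}_2)$ by $(u',u)^*(\mathcal{E}_1,\mathcal{V}_1)\otimes_{S'}\mathcal{M}$; by lemma \ref{51} with $t=1$ this corresponds, modulo the operation of $\operatorname{H}^0(S',GL(1,\mathcal{O}_{S'}))$, to an element of $F_1(S')$ --- the non-split hypothesis being exactly what makes the associated rank-$1$ quotient of $\mathcal{E}xt^1_{\pi_{S'}}((u',u)^*(\mathcal{E}_2,\mathcal{V}_2),(u',u)^*(\mathcal{E}_1,\mathcal{V}_1))^\vee$ locally free. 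Representability of $F_1$ by $P$ then yields the unique $S$-morphism $\psi:S'\rightarrow P$ inducing this quotient, and naturality of the constructions in propositions \ref{35} and \ref{49} shows that $\{e_{s'}\}_{s'\in S'}$ is the pullback of $\{e_p\}_{p\in P}$ via $\psi$, modulo $\operatorname{H}^0(S',GL(1,\mathcal{O}_{S'}))$. I expect the only real work to be bookkeeping: checking that the pulled-back, twisted $\mathcal{E}xt^1$ sheaves still commute with base change (so that proposition \ref{35} is available over $P$ and over every reduced $S'$), that $P$ is reduced, and keeping precise track of the $\operatorname{H}^0(S',GL(1,\mathcal{O}_{S'}))$-ambiguity --- which corresponds to the choice of an isomorphism $\psi^*\mathcal{O}_P(1)\simeq\mathcal{M}$ --- with everything else being a transcription of the proofs of corollaries \ref{40} and \ref{41}.
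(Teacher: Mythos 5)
Your proposal is correct and follows essentially the same route as the paper: the paper obtains Corollary \ref{44} precisely as the $t=1$ specialization of Corollary \ref{52} (with $P=Q_1$, $\varphi=\theta_1$, $\mathcal{O}_P(1)=\overline{\mathcal{M}}_1$ and non-degenerate rank-$1$ $=$ non-split), whose omitted proof is declared to be modelled on those of Corollaries \ref{40} and \ref{41} via Lemma \ref{51} and Proposition \ref{49}, exactly the skeleton you fill in. Your added verifications (reducedness of $P$, base-change commutation of the pulled-back twisted $\mathcal{E}xt^1$ via Lemma \ref{28}, non-splitness from the nowhere-vanishing section and $\tau^1(p)$ being an isomorphism) are the bookkeeping the paper leaves implicit.
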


Here ``universal'' means the following: given any reduced $S$-scheme $u:S'\rightarrow S$, any $\mathcal{L}\in
\textrm{Pic}(S')$ and any family $\{e_{s'}\}_{s'\in S'}$ of non-split extensions of $(u',u)^*(\mathcal{E}_2,
\mathcal{V}_2)$ by $(u',u)^*(\mathcal{E}_1,\mathcal{V}_1)\otimes_{S'} \mathcal{L}$ over $S'$, then there is a unique
morphism of $S$-schemes $\psi:S'\rightarrow P$ such that the family $\{e_{s'}\}_{s'\in S'}$ is the pullback of
$\{e_p\}_{p\in P}$ via $\psi$, modulo the canonical operation of $\operatorname{H}^0(S',\mathcal{O}^*_{S'})$. The
relevant diagram to consider is the following:

\[\begin{tikzpicture}[xscale=1.5,yscale=-1.2]
    \node (A0_2) at (2, 0.6) {$\curvearrowright$};
    \node (A1_0) at (0, 1) {$C\times S'$};
    \node (A1_2) at (2, 1) {$C\times P$};
    \node (A1_4) at (4, 1) {$C\times S$};
    \node (A2_1) at (1, 2) {$\square$};
    \node (A2_3) at (3, 2) {$\square$};
    \node (A3_0) at (0, 3) {$S'$};
    \node (A3_2) at (2, 3) {$P$};
    \node (A3_4) at (4, 3) {$S.$};
    \node (A4_2) at (2, 3.4) {$\curvearrowright$};
    \path (A1_4) edge [->]node [auto] {$\scriptstyle{\pi_S}$} (A3_4);
    \path (A1_0) edge [->,bend right=35]node [auto] {$\scriptstyle{u'}$} (A1_4);
    \path (A3_0) edge [->]node [auto] {$\scriptstyle{\psi}$} (A3_2);
    \path (A1_0) edge [->]node [auto] {$\scriptstyle{\psi'}$} (A1_2);
    \path (A3_0) edge [->,bend left=35,swap]node [auto] {$\scriptstyle{u}$} (A3_4);
    \path (A1_0) edge [->]node [auto] {$\scriptstyle{\pi_{S'}}$} (A3_0);
    \path (A1_2) edge [->]node [auto] {$\scriptstyle{\varphi'}$} (A1_4);
    \path (A1_2) edge [->]node [auto] {$\scriptstyle{\pi_P}$} (A3_2);
    \path (A3_2) edge [->]node [auto] {$\scriptstyle{\varphi}$} (A3_4);
\end{tikzpicture}\]

\begin{corollary}\label{45}
Let us suppose that $\operatorname{Hom}((\mathcal{E}_2,\mathcal{V}_2)_s,(\mathcal{E}_1,\mathcal{V}_1)_s)=0$ for all
$s\in S$ and that $\mathcal{E}xt^1_{\pi_S}((\mathcal{E}_2,\mathcal{V}_2),(\mathcal{E}_1,\mathcal{V}_1))$ commutes with
base change. Then there is a family $(\mathcal{E}_P,\mathcal{V}_P)$ parametrized by $P$ and a non-split extension

\begin{equation}\label{46}
0\rightarrow(\varphi',\varphi)^*(\mathcal{E}_1,\mathcal{V}_1)\otimes_P\mathcal{O}_P(1)\rightarrow
(\mathcal{E}_P,\mathcal{V}_P)\rightarrow(\varphi',\varphi)^*(\mathcal{E}_2,\mathcal{V}_2)\rightarrow 0
\end{equation}
that is universal on the category of $S$-schemes.
\end{corollary}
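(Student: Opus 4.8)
The plan is to follow the proof of corollary \ref{41}, now working with the functor $F_1$ of rank-one locally free quotients of $\hat E^{\vee}$ in place of the functor $F$ of families of extensions, and twisting by line bundles throughout; in other words, this is the $t=1$ case of corollary \ref{53}. Write $\hat E:=\mathcal{E}xt^1_{\pi_S}((\mathcal{E}_2,\mathcal{V}_2),(\mathcal{E}_1,\mathcal{V}_1))$. First I would check that the hypotheses of the corollary representing $F_1$ by $P$ are in force: by cohomology and base change (proposition \ref{27}) the assumption $\operatorname{Hom}((\mathcal{E}_2,\mathcal{V}_2)_s,(\mathcal{E}_1,\mathcal{V}_1)_s)=0$ for all $s$ forces $\mathcal{H}om_{\pi_S}((\mathcal{E}_2,\mathcal{V}_2),(\mathcal{E}_1,\mathcal{V}_1))=0$, so $\mathcal{E}xt^0_{\pi_S}$ commutes with base change; together with the standing hypothesis on $\mathcal{E}xt^1_{\pi_S}$ this gives base change for $i=0,1$, hence $\hat E$ is locally free and commutes with base change. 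Thus $P=\mathbb{P}(\hat E^{\vee})\stackrel{\varphi}{\longrightarrow}S$ is defined and represents $F_1$, with universal quotient $q\colon(\varphi',\varphi)^*\hat E^{\vee}\twoheadrightarrow\mathcal{O}_P(1)$.

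Next I would construct the extension (\ref{46}). Dualizing $q$ and using base change for $i=1$ produces a nowhere-vanishing global section of $\mathcal{E}xt^1_{\pi_P}((\varphi',\varphi)^*(\mathcal{E}_2,\mathcal{V}_2),(\varphi',\varphi)^*(\mathcal{E}_1,\mathcal{V}_1))\otimes_P\mathcal{O}_P(1)$, which by lemma \ref{28} is the same as a global section of $\mathcal{E}xt^1_{\pi_P}((\varphi',\varphi)^*(\mathcal{E}_2,\mathcal{V}_2),(\varphi',\varphi)^*(\mathcal{E}_1,\mathcal{V}_1)\otimes_P\mathcal{O}_P(1))$. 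Applying lemma \ref{29}(a) with the given $S$, with $u=\varphi$ and $\mathcal{M}=\mathcal{O}_P(1)$ (condition (a) is exactly our hypothesis), the map $\mu$ is an isomorphism, so by lemma \ref{32} this section is the class of a genuine short exact sequence of families of coherent systems over $P$; that sequence is (\ref{46}), and it defines $(\mathcal{E}_P,\mathcal{V}_P)$. For non-splitness, lemma \ref{31} applied over $P$ identifies the restriction of (\ref{46}) to $C_p$ with the image under the isomorphism $\tau^1(p)$ of the value of the section at $p$; since that section is nowhere vanishing, the value, hence the fiber extension class, is nonzero, which is precisely non-degeneracy of rank $1$ on the left in the sense of definition \ref{62}.

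For the universal property, let $u\colon S'\to S$ be an $S$-scheme, let $\mathcal{L}\in\textrm{Pic}(S')$, and suppose given a non-split extension $0\to(u',u)^*(\mathcal{E}_1,\mathcal{V}_1)\otimes_{S'}\mathcal{L}\to(\mathcal{E}_{S'},\mathcal{V}_{S'})\to(u',u)^*(\mathcal{E}_2,\mathcal{V}_2)\to 0$. Running the identifications above backwards: lemma \ref{32} turns it into a class in $\operatorname{Ext}^1_{S'}$; lemma \ref{29}(a) (now with this $u$ and $\mathcal{M}=\mathcal{L}$) together with lemma \ref{28} and base change turn that class into a global section of $u^*\hat E\otimes_{S'}\mathcal{L}$; and the non-split hypothesis forces this section to be nowhere vanishing, so it is a rank-one locally free quotient $u^*\hat E^{\vee}\twoheadrightarrow\mathcal{L}$, i.e. an element of $F_1(S')$. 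Representability of $F_1$ by $P$ then gives a unique $S$-morphism $\psi\colon S'\to P$ realizing this quotient, and unwinding the construction shows that the pullback of (\ref{46}) along $\psi$ is the given extension, up to the choice of an isomorphism $\psi^*\mathcal{O}_P(1)\simeq\mathcal{L}$, that is, modulo $\operatorname{H}^0(S',\mathcal{O}^*_{S'})$; uniqueness of $\psi$ is inherited from uniqueness in the universal property of $P$.

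I expect the main obstacle to be the bookkeeping in this last step, namely verifying that the assignment ``non-split extension of $(u',u)^*(\mathcal{E}_2,\mathcal{V}_2)$ by $(u',u)^*(\mathcal{E}_1,\mathcal{V}_1)\otimes_{S'}\mathcal{L}$, for arbitrary $\mathcal{L}$, modulo $\operatorname{H}^0(S',\mathcal{O}^*_{S'})$'' $\leftrightarrow$ ``element of $F_1(S')$'' is well defined, bijective and natural in $S'$ (this is essentially lemma \ref{51} for $t=1$), and in particular that it intertwines pullback of extensions along $v\colon S''\to S'$ with the functorial map $F_1(v)$, so that the universal property of the representing bundle transfers verbatim; the remaining ingredients (lemmas \ref{28}, \ref{29}, \ref{31}, \ref{32} and proposition \ref{27}) are invoked exactly as in the proofs of corollaries \ref{41} and \ref{53}. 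Note that, in contrast with corollary \ref{44}, no reducedness hypothesis on $S$ is needed here: it is the fiberwise vanishing of $\operatorname{Hom}$ that makes $\mu$ an isomorphism over every $S$-scheme, via lemma \ref{29}(a).
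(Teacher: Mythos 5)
Your proposal is correct and follows essentially the same route as the paper: the paper obtains Corollary \ref{45} as the $t=1$ case of Corollary \ref{53} (whose omitted proof is modelled on Corollary \ref{41} via Proposition \ref{49}, Lemma \ref{51} and Lemmas \ref{28}, \ref{29}, \ref{31}, \ref{32}), identifying $P=Q_1$, $\varphi=\theta_1$, $\overline{\mathcal{M}}_1=\mathcal{O}_P(1)$ and non-degenerate of rank $1$ with non-split, which is exactly what you spell out. Your added details (deducing base change for $i=0$ from the fiberwise vanishing of $\operatorname{Hom}$, the explicit universal quotient onto $\mathcal{O}_P(1)$, and the $F_1(S')\leftrightarrow$ twisted non-split extensions dictionary modulo $\operatorname{H}^0(S',\mathcal{O}_{S'}^*)$) are precisely the steps the paper leaves implicit.
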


Here ``universal'' means the following: let us fix any morphism $u:S'\rightarrow S$, any line bundle $\mathcal{L}\in
\textrm{Pic}(S')$ and any non-split extension

\begin{equation}\label{47}
0\rightarrow (u',u)^*(\mathcal{E}_1,\mathcal{V}_1)\otimes_{S'}\mathcal{L}\rightarrow(\mathcal{E}_S,\mathcal{V}_S)
\rightarrow(u',u)^*(\mathcal{E}_2,\mathcal{V}_2)\rightarrow 0.
\end{equation}

Then there is a unique morphism of $S$-schemes $\psi:S'\rightarrow P$ such that (\ref{47}) is the pullback of
(\ref{46}) via $\psi$, modulo the canonical operation of $\operatorname{H}^0(S',\mathcal{O}_{S'}^*)$.

\section{Applications}

Let us fix any scheme $T$ and any pair of families of coherent systems $(\mathcal{E}_l,\mathcal{V}_l)$ parametrized by
$T$ (of type $(n_l,d_l,k_l)$) for $l=1,2$.

\begin{defin}
For any pair $(a,b)\in\mathbb{N}_0$ we set:

$$T_{a,b}:=\Big\{t\in T\textrm{ s.t. dim }\operatorname{Ext}^2((\mathcal{E}_2,\mathcal{V}_2)_t,(\mathcal{E}_1,
\mathcal{V}_1)_t)=:a,$$
$$\textrm{dim }\operatorname{Hom}((\mathcal{E}_2,\mathcal{V}_2)_t,(\mathcal{E}_1,\mathcal{V}_1)_t)=:b\Big\}.$$
\end{defin}

By proposition \ref{11}, each set $T_{a,b}$ is locally closed in $T$ with the induced reduced structure. By
proposition \ref{16} we have that on $T_{a,b}$

\begin{equation}\label{54}
\textrm{dim Ext}^1((\mathcal{E}_2,\mathcal{V}_2)_t,(\mathcal{E}_1,\mathcal{V}_1)_t)=:C_{21}+a+b,
\end{equation}
so it is constant (here $C_{21}$ depends only on the genus of $C$ and on $(n_l,d_l,k_l)$ for $l=1,2$). By
\cite[lemma 3.3]{BGMN} the quantity (\ref{54}) is bounded above on $T$ (and it is non-negative). Since both $a$ and
$b$ are non-negative integers, then $T$ is the disjoint union of finitely many non-empty locally closed subschemes of
the form $T_{a,b}$.

\begin{lemma}\label{55}
Each $T_{a,b}$ admits a finite stratification $\{T_{a,b}^j\}_j$ consisting of locally closed reduced subschemes and
such that on each $T_{a,b}^j$ the sheaves

$$\mathcal{E}xt^i_{\pi_{T_{a,b}^j}} \Big((\mathcal{E}_2,\mathcal{V}_2)|_{T_{a,b}^j},
(\mathcal{E}_1,\mathcal{V}_1)|_{T_{a,b}^j}\Big)\quad\textrm{for }i=0,1,2$$
are locally free and commute with every base change to $T_{a,b}^j$. If $T_{a,b}$ is integral for a certain $(a,b)$,
then the set $\{T_{a,b}^j\}_j$ can be chosen to coincide with $\{T_{a,b}\}$. 
\end{lemma}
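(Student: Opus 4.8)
The plan is to reduce to the case in which $T_{a,b}$ is integral and then to invoke Propositions \ref{11} and \ref{27}. First I would use that $T_{a,b}$, being locally closed in $T$ with its reduced structure, is a reduced noetherian scheme, and that every reduced noetherian scheme admits a finite stratification into integral locally closed subschemes: if $\eta_1,\dots,\eta_r$ are the generic points of its irreducible components, the opens $U_i:=T_{a,b}\setminus\bigcup_{j\neq i}\overline{\{\eta_j\}}$ are pairwise disjoint and integral, their union is dense open, and its complement is closed of strictly smaller dimension, so iterating the construction on the complement terminates. Call the resulting strata $T_{a,b}^j$; if $T_{a,b}$ is already integral the procedure returns $\{T_{a,b}\}$, which takes care of the last sentence of the lemma. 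On each $T_{a,b}^j$ the functions $t^i(s):=\dim\operatorname{Ext}^i((\mathcal{E}_2,\mathcal{V}_2)_s,(\mathcal{E}_1,\mathcal{V}_1)_s)$ are still constant: $t^0\equiv b$ and $t^2\equiv a$ by definition of $T_{a,b}$, $t^1\equiv C_{21}+a+b$ by Proposition \ref{16}, and $t^i\equiv 0$ for $i\geq 3$ since $C$ is a curve.

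Working now on a fixed integral stratum $T_{a,b}^j$ with the restricted families, Proposition \ref{11} immediately gives that $\mathcal{E}xt^i_{\pi_{T_{a,b}^j}}((\mathcal{E}_2,\mathcal{V}_2)|_{T_{a,b}^j},(\mathcal{E}_1,\mathcal{V}_1)|_{T_{a,b}^j})$ is locally free for every $i\geq 0$, in particular for $i=0,1,2$; for $i\geq 3$ it has rank $t^i=0$ and hence vanishes.

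It remains to prove that these sheaves commute with base change, i.e. that the homomorphism $\tau^i(s)$ of Proposition \ref{27} is an isomorphism for every $i$ and every $s\in T_{a,b}^j$. I would establish surjectivity of all the $\tau^i(s)$ by descending induction on $i$: for $i\geq 3$ both ends of $\tau^i(s)$ vanish, so it is trivially surjective; and if $\tau^{i+1}(s)$ is surjective for all $s$, then Proposition \ref{27}(ii), applied with its index equal to $i+1$ and combined with the local freeness of $\mathcal{E}xt^{i+1}_{\pi_{T_{a,b}^j}}$, forces $\tau^{i}(s)$ to be surjective for all $s$. Once $\tau^i(s)$ is surjective at every point, Proposition \ref{27}(i) covers $T_{a,b}^j$ by opens on which $\tau^i$ is an isomorphism, so $\tau^i$ is an isomorphism everywhere, which is precisely the statement that $\mathcal{E}xt^i_{\pi_{T_{a,b}^j}}$ commutes with every base change to $T_{a,b}^j$. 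The only delicate point is this last induction, which must be started at the vanishing degrees and run through Proposition \ref{27} in the right order, so that the surjectivity hypothesis needed for part (ii) is already available at each step; the rest is a direct application of the quoted results.
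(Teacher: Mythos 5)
Your proof is correct and follows the same overall strategy as the paper: constancy of the fibrewise dimensions on $T_{a,b}$ (definition of $T_{a,b}$ plus proposition \ref{16}), vanishing of $\operatorname{Ext}^i$ for $i\geq 3$ because $C$ is a curve, proposition \ref{11} for local freeness on integral pieces, and a descending induction through proposition \ref{27} to upgrade surjectivity of the base change maps to isomorphisms everywhere. The one genuine difference is how the stratification is produced. The paper takes the (possibly overlapping) irreducible components $T_{a,b;l}$, establishes local freeness and base change on each integral component, and then defines the disjoint strata $T_{a,b}^j$ as intersections of components minus the remaining ones; these strata are reduced but need not be irreducible, so the desired properties are transferred to them by restriction from a component, using the base-change property already proved there. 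You instead decompose $T_{a,b}$ directly into pairwise disjoint \emph{integral} locally closed subschemes by the standard noetherian induction (points lying in exactly one component, then recurse on the lower-dimensional complement), which lets you apply proposition \ref{11} and the base-change induction on each stratum with no transfer step; the price is a possibly finer stratification, which is harmless since only finiteness is required, and your construction also gives the final sentence of the lemma immediately. One small remark: your aside that for $i\geq 3$ the sheaf ``has rank $t^i=0$ and hence vanishes'' is not justified at that point, since proposition \ref{11} gives local freeness but not that the rank equals the fibre dimension before base change is known; this is harmless, because your induction only uses the trivial surjectivity of $\tau^i(s)$ for $i\geq 3$ (the target vanishes) together with local freeness of $\mathcal{E}xt^{i}$, and the vanishing can be deduced afterwards once the $\tau^i(s)$ are known to be isomorphisms.
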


\begin{proof}
Let us fix any pair $(a,b)$ such that $T_{a,b}\neq\varnothing$ and let us consider the set $\{T_{a,b;l}\}_l$ of its
irreducible components; since we are working with schemes of finite type over $\mathbb{C}$, such a set is finite. By
construction every $T_{a,b;l}$ is reduced and irreducible, hence integral. Now for every triple $(a,b;l)$, for every
$i\geq 0$ and for every $t\in T_{a,b;l}$, let us denote by $\tau^i(a,b;l;t)$ the base change:

$$\tau^i(a,b;l;t):\mathcal{E}xt^i_{\pi_{T_{a,b;l}}}\Big((\mathcal{E}_2,\mathcal{V}_2)|_{T_{a,b;l}},
(\mathcal{E}_1,\mathcal{V}_1)|_{T_{a,b;l}}\Big)\otimes k(t)\rightarrow$$
$$\rightarrow\operatorname{Ext}^i\Big((\mathcal{E}_2,\mathcal{V}_2)_t,(\mathcal{E}_1,\mathcal{V}_1)_t\Big).$$

Since $C$ is a curve, for every point $t$ in $T$ we have that

$$\operatorname{Ext}^3\Big((\mathcal{E}_2,\mathcal{V}_2)_t,(\mathcal{E}_1,\mathcal{V}_1)_t\Big)=0.$$

Therefore, every $\tau^3(a,b,l,t)$ is surjective and $\mathcal{E}xt^3_{\pi_{T_{a,b;l}}}((\mathcal{E}_2,
\mathcal{V}_2)|_{T_{a,b;l}},(\mathcal{E}_1,\mathcal{V}_1)|_{T_{a,b;l}})$ $=0$, so in particular it is locally free. Now
by (\ref{54}) we get that  for every $i=0,1,2$ the dimension of $\operatorname{Ext}^i((\mathcal{E}_2,\mathcal{V}_2)_t,
(\mathcal{E}_1,\mathcal{V}_1)_t)$ is constant on every $T_{a,b;l}$. Since every $T_{a,b;l}$ is integral, then by
proposition \ref{11} we get that on each $T_{a,b;l}$ the sheaves

$$\mathcal{E}xt^i_{\pi_{T_{a,b;l}}}\Big((\mathcal{E}_2,\mathcal{V}_2)|_{T_{a,b;l}},
(\mathcal{E}_1,\mathcal{V}_1)|_{T_{a,b;l}}\Big)$$
are locally free for $i=0,1,2$. Then by descending induction and base change (proposition \ref{27}) we can prove that
for every $i=0,1,2$, for every triple $(a,b;l)$ and for every $t$ in $T_{a,b;l}$ the base change $\tau^i(a,b;l;t)$
is an isomorphism.\\

Now let us fix any pair $(a,b)$ and let us denote by $L=\{l_1<\cdots<l_r\}$ the corresponding set of indices. For
each subset $\{l'_1< \cdots < l'_s\}\subset L$ we denote by $\{l'_{s+1} <\cdots <l'_{r}\}$ its complement in $L$ and
we define

\begin{equation}\label{56}
T_{a,b}^{l'_1,\cdots,l'_s}:=(T_{a,b;l'_1}\cap\cdots\cap T_{a,b;l'_s})\smallsetminus 
(T_{a,b;l'_{s+1}}\cup\cdots\cup T_{a,b;l'_r}).
\end{equation}

Each such scheme is locally closed in $T$ and any two such schemes are disjoint if they are associated to different
sets of indices; moreover each $T_{a,b}$ is covered by such subschemes. Then we denote by $j$ any set of indices
$j:=\{l'_1< \cdots < l'_s\}$ and by $T_{a,b}^j$ the corresponding scheme defined as in (\ref{56}). For each $(a,b)$,
the set of all such $j$'s is finite. Now for each such $j$, let us consider the inclusion $T_{a,b}^j\hookrightarrow
T_{a,b;l'_1}$. By base change for $i=0,1,2$ the sheaves 

$$\mathcal{E}xt^i_{\pi_{T_{a,b}^j}}\Big((\mathcal{E}_2,\mathcal{V}_2)|_{T_{a,b}^j},
(\mathcal{E}_1,\mathcal{V}_1)|_{T_{a,b}^j}\Big)=$$
$$=\left(\mathcal{E}xt^i_{\pi_{T_{a,b;l'_1}}}\Big((\mathcal{E}_2,\mathcal{V}_2)|_{T_{a,b;l'_1}},
(\mathcal{E}_1,\mathcal{V}_1)|_{T_{a,b}^{l'_1}}\Big)\right)|_{T_{a,b}^j}$$
are locally free for $i=0,1,2$ and commute with base change, so we conclude.
\end{proof}

By using lemma \ref{55} together with the results of the previous sections we get the following propositions.

\begin{proposition}\label{57}
Let us fix any scheme $T$ and any pair of families of coherent systems $(\mathcal{E}_l,\mathcal{V}_l)$ parametrized by
$T$ for $l=1,2$. Then there exists a finite stratification of $T$ by reduced locally closed subschemes $T_{a,b}^j$
defined as in lemma \ref{55}, such that the conclusions of corollaries \ref{40}, \ref{52}, \ref{4-48} and \ref{44}
hold for each $S=T_{a,b}^j$ and for the pair of families $(\mathcal{E}_l,\mathcal{V}_l)|_{T_{a,b}^j}$ for $l=1,2$. Let
us denote by

$$\eta_{a,b}^j:V_{a,b}^j\rightarrow T_{a,b}^j,\quad\theta_{t;a,b}^j:Q_{t;a,b}^j\rightarrow T_{a,b}^j,\quad
\varphi_{a,b}^j:P_{a,b}^j\rightarrow T_{a,b}^j$$
the vector bundles, Grassmannian fibrations (for $t\geq 2$) and the projective bundles obtained by those corollaries.
For every point $t$ of $T$ we write

$$\mathbb{H}^1_{21}(t):=\operatorname{Ext}^1((\mathcal{E}_2,\mathcal{V}_2)_t,(\mathcal{E}_1,\mathcal{V}_1)_t).$$

Then the dimension of the vector space $\mathbb{H}_{21}(t)$ is constant over each $T_{a,b}^j$. Moreover, the fibers
of $\eta_{a,b}^j$, $\theta_{t;a,b}^j$ and $\varphi_{a,b}^j$ over any $t\in T_{a,b}^j$ are canonically identified with
$\mathbb{H}^1_{21}(t)$, $\operatorname{Grass}(t,\mathbb{H}^1_{21}(t))$ and $\mathbb{P}(\mathbb{H}^1_{21}(t))$
respectively. T
\end{proposition}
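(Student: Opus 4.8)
The plan is to deduce this proposition by assembling the results of the previous sections; essentially no new argument is required. First I would apply Lemma \ref{55} to the scheme $T$ and to the pair of families $(\mathcal{E}_l,\mathcal{V}_l)$ for $l=1,2$. This directly produces the finite stratification $\{T_{a,b}^j\}$ of $T$ by reduced, locally closed subschemes such that on each stratum the sheaves $\mathcal{E}xt^i_{\pi_{T_{a,b}^j}}\big((\mathcal{E}_2,\mathcal{V}_2)|_{T_{a,b}^j},(\mathcal{E}_1,\mathcal{V}_1)|_{T_{a,b}^j}\big)$ are locally free and commute with every base change to that stratum, for $i=0,1,2$.

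Next I would fix an arbitrary stratum $S:=T_{a,b}^j$ and check that the hypotheses of Corollaries \ref{40}, \ref{52}, \ref{4-48} and \ref{44} are satisfied for $S$ and the restricted families: $S$ is reduced by construction, and $\mathcal{E}xt^i_{\pi_S}\big((\mathcal{E}_2,\mathcal{V}_2)|_S,(\mathcal{E}_1,\mathcal{V}_1)|_S\big)$ commutes with base change for $i=0,1$ by Lemma \ref{55}. Since no vanishing of $\operatorname{Hom}$ is being assumed, only the ``reduced base'' versions of these corollaries are available, and this is exactly what the statement asserts. Applying them on each $S$ yields the vector bundle $\eta_{a,b}^j$, the relative Grassmannian fibrations $\theta_{t;a,b}^j$ for every $t\geq 1$ (hence in particular for $t\geq 2$), and the projective bundle $\varphi_{a,b}^j$, which is the specialization $t=1$.

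It then remains to settle the two numerical assertions. For the constancy of $\dim\mathbb{H}^1_{21}(t)$ on each stratum I would note that $T_{a,b}^j\subseteq T_{a,b}$, so the integers $a=\dim\operatorname{Ext}^2\big((\mathcal{E}_2,\mathcal{V}_2)_t,(\mathcal{E}_1,\mathcal{V}_1)_t\big)$ and $b=\dim\operatorname{Hom}\big((\mathcal{E}_2,\mathcal{V}_2)_t,(\mathcal{E}_1,\mathcal{V}_1)_t\big)$ are constant there by definition of $T_{a,b}$; formula (\ref{54}), which is an instance of Proposition \ref{16}, then gives $\dim\mathbb{H}^1_{21}(t)=C_{21}+a+b$, constant on $T_{a,b}^j$. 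For the fiber identifications I would simply quote the fiberwise statements already recorded in Proposition \ref{39} (the fiber of $\eta$ over $t$ is canonically $\operatorname{Ext}^1\big((\mathcal{E}_2,\mathcal{V}_2)_t,(\mathcal{E}_1,\mathcal{V}_1)_t\big)=\mathbb{H}^1_{21}(t)$) and in Proposition \ref{49} (the fiber of $\theta_t$ over $t$ is canonically $\operatorname{Grass}(t,\mathbb{H}^1_{21}(t))$), and take $t=1$ together with the canonical isomorphism $\operatorname{Grass}(1,W)\simeq\mathbb{P}(W)$ for the statement about $\varphi_{a,b}^j$.

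Since each step is a citation of a prior result, I do not expect a serious obstacle. The only point calling for care is the bookkeeping of hypotheses: one must be sure that every stratum really carries both reducedness and compatibility of the relevant $\mathcal{E}xt^i$ with base change for $i=0,1$ --- which is precisely the content of Lemma \ref{55} --- and that one invokes only the reduced-base corollaries, rather than those (Corollaries \ref{41}, \ref{53}, \ref{4-49}, \ref{45}) that additionally require $\operatorname{Hom}=0$, a condition not available here.
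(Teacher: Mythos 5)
Your proposal is correct and is essentially the paper's own argument: the paper gives no separate proof of Proposition \ref{57}, stating only that it follows from Lemma \ref{55} combined with the results of the preceding sections, which is exactly the assembly you carry out (stratification from Lemma \ref{55}, the reduced-base Corollaries \ref{40}, \ref{52}, \ref{4-48}, \ref{44} on each stratum, constancy of $\dim\mathbb{H}^1_{21}(t)$ via (\ref{54}), and the fiber identifications from Propositions \ref{39} and \ref{49} with $t=1$ giving the projective case). Your care in invoking only the reduced-base corollaries, and not those requiring $\operatorname{Hom}=0$, matches the paper's intent.
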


\begin{proposition}
Let us fix any scheme $T$ and any pair of families of coherent systems $(\mathcal{E}_l,\mathcal{V}_l)$ parametrized by
$T$ for $l=1,2$. Let us suppose that $\operatorname{Hom}((\mathcal{E}_2,\mathcal{V}_2)_t,(\mathcal{E}_1,$
$\mathcal{V}_1)_t)=0$ for all $t\in T$. Then there exists a finite stratification of $T$ by reduced locally closed
subschemes $T_a^j$, such that the conclusions of corollaries \ref{41}, \ref{53}, \ref{4-49} and \ref{45} hold for each
$S=T_a^j$ and for the pair of families $(\mathcal{E}_l,\mathcal{V}_l)|_{T_a^j}$ for $l=1,2$. The description of the
various fibrations that are obtained in this way is the same as in the previous proposition, once we set $b:=0$ and
$T_{a,0}^j=:T_a^j$.
\end{proposition}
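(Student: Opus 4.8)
The plan is to mimic the proof structure of Proposition \ref{57}, substituting the hypothesis $\operatorname{Hom}((\mathcal{E}_2,\mathcal{V}_2)_t,(\mathcal{E}_1,\mathcal{V}_1)_t)=0$ for all $t\in T$ for the general-position arguments used there, and to invoke the strengthened corollaries \ref{41}, \ref{53}, \ref{4-49} and \ref{45} instead of \ref{40}, \ref{52}, \ref{4-48} and \ref{44}. First I would observe that since $\mathcal{H}om_{\pi_T}((\mathcal{E}_2,\mathcal{V}_2),(\mathcal{E}_1,\mathcal{V}_1))_t=0$ for every $t$ (by cohomology and base change, exactly as in the proof of lemma \ref{29}(a)), the invariant $b$ appearing in the definition of $T_{a,b}$ is identically zero; hence the stratification $\{T_{a,b}\}_{(a,b)}$ of lemma \ref{55} collapses to a stratification indexed by the single parameter $a$, and we write $T_{a,0}=:T_a$. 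Applying lemma \ref{55} to $T$ gives a further finite refinement into reduced locally closed subschemes $T_a^j$ on which the sheaves $\mathcal{E}xt^i_{\pi_{T_a^j}}((\mathcal{E}_2,\mathcal{V}_2)|_{T_a^j},(\mathcal{E}_1,\mathcal{V}_1)|_{T_a^j})$ for $i=0,1,2$ are locally free and commute with every base change; in particular $\mathcal{E}xt^i_{\pi_S}$ commutes with base change for $i=0,1$ on each $S=T_a^j$.

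Next I would check that the hypotheses of corollaries \ref{41}, \ref{53}, \ref{4-49} and \ref{45} are all satisfied on each stratum $S=T_a^j$ for the restricted families $(\mathcal{E}_l,\mathcal{V}_l)|_{T_a^j}$ ($l=1,2$). Indeed, the vanishing $\operatorname{Hom}((\mathcal{E}_2,\mathcal{V}_2)_t,(\mathcal{E}_1,\mathcal{V}_1)_t)=0$ passes to points of the subscheme $T_a^j\subseteq T$, and the base-change property for $\mathcal{E}xt^1_{\pi_S}$ (indeed for $i=0,1$) holds by lemma \ref{55}. Therefore each of those four corollaries applies verbatim and produces, over each $T_a^j$, a family $(\mathcal{E}_{Q_t},\mathcal{V}_{Q_t})$ (resp. $(\mathcal{E}_{Q_t}',\mathcal{V}_{Q_t}')$, $(\mathcal{E}_P,\mathcal{V}_P)$) parametrized by the corresponding relative Grassmannian, projective bundle, etc., together with the universal non-degenerate (resp. non-split) extensions of ranks $t\geq 1$ on the left and on the right, with the stated universal property on the category of $T_a^j$-schemes.

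Finally I would record the identification of the fibers. By (\ref{54}) with $b=0$, the dimension of $\mathbb{H}^1_{21}(t)=\operatorname{Ext}^1((\mathcal{E}_2,\mathcal{V}_2)_t,(\mathcal{E}_1,\mathcal{V}_1)_t)$ equals $C_{21}+a$, hence is constant on each $T_a^j$; and by the fiber descriptions in propositions \ref{39} and \ref{49} (for the Grassmannian) and their specialization to $t=1$, together with the base-change isomorphism $\tau^1(t)$ from lemma \ref{55}, the fibers of the resulting vector bundle, Grassmannian fibration and projective bundle over $t\in T_a^j$ are canonically identified with $\mathbb{H}^1_{21}(t)$, $\operatorname{Grass}(t,\mathbb{H}^1_{21}(t))$ and $\mathbb{P}(\mathbb{H}^1_{21}(t))$ respectively. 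This is exactly the content of proposition \ref{57} once one sets $b=0$ and $T_{a,0}^j=:T_a^j$, which is what the statement asserts. The only real work is the bookkeeping of the stratification; there is no genuine obstacle, since every analytic ingredient is already packaged in lemma \ref{55} and in the four corollaries, and the hypothesis $\operatorname{Hom}=0$ is precisely what is needed to upgrade from the ``reduced $S$-schemes'' universality of corollary \ref{40} to the full $S$-scheme universality of corollary \ref{41} (and likewise for the others).
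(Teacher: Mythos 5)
Your proposal is correct and follows essentially the same route as the paper, which obtains this proposition (with no separate written proof) exactly as you do: set $b=0$, apply lemma \ref{55} to get the reduced locally closed strata $T_a^j=T_{a,0}^j$ on which the relevant $\mathcal{E}xt^i_{\pi}$ sheaves are locally free and commute with base change, and then invoke corollaries \ref{41}, \ref{53}, \ref{4-49} and \ref{45}, whose stronger universality is supplied precisely by the pointwise $\operatorname{Hom}$-vanishing. One cosmetic remark: $b=0$ follows directly from the hypothesis $\operatorname{Hom}((\mathcal{E}_2,\mathcal{V}_2)_t,(\mathcal{E}_1,\mathcal{V}_1)_t)=0$, since $b$ is defined fiberwise; the vanishing of the sheaf $\mathcal{H}om_{\pi_T}$ is a consequence of this (via surjectivity of $\tau^0$) rather than the reason for it.
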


As we said in the introduction, the main motivation for studying universal families of extensions is that of giving
a scheme theoretic description of the sets of the form $G(\alpha_c;n,d,k;n_1,k_1)$ introduced in definition \ref{14}.\\

Let us fix any triple $(n,d,k)$, any critical value $\alpha_c$ for it and any pair $(n_1,k_1)$ as in definition
\ref{14}. Then let us set $d_1,n_2,d_2$ and $k_2$ as in that definition; moreover let us set $G_l:=
G(\alpha_c;n_l,d_l,k_l)$ for $l=1,2$. For $l=1,2$, let us denote by $\hat{G}_l$ the Quot schemes whose GIT quotient by
$PGL(N_l)$ is $G_l$ and let $(\hat{\mathcal{Q}}_l,\hat{\mathcal{W}}_l)$ be the local universal family parametrized
by $\hat{G}_l$ (see remark \ref{10}). If $GCD(n_l,d_l,k_l)=1$, let us denote by $(\mathcal{Q}_l,\mathcal{W}_l)$ the
corresponding universal family parametrized by $G_l$. In addition, let us denote by $\hat{p}_l:\hat{G}_1\times
\hat{G}_2\rightarrow\hat{G}_l$ and $p_l:G_1\times G_2\rightarrow G_1$ the various projections. We denote by $\hat{t}_l$
any point of $\hat{G}_l$ and by $t_l=(E_l,V_l)$ its image in $G_l$.

\begin{proposition}\label{58}
Having fixed all these notations, for all $(\alpha_c;n,d,k;n_1,k_1)$ as before there exists a finite stratification
$\{\hat{T}_{a,b;i}\}_{a,b;i}$ of $\hat{G}_1\times\hat{G}_2$ by locally closed subschemes such that:

\begin{itemize}
  \item $(a,b)$ varies over a finite subset of $\mathbb{N}_0^2$; for each $(a,b)$ the set $\{\hat{T}_{a,b;i}\}_i$ is a
  finite stratification by locally closed subschemes of
  
  $$\hat{T}_{a,b}:=\Big\{(\hat{t}_1,\hat{t}_2)\in\hat{G}_1\times\hat{G}_2\textrm{ s.t. }\operatorname{dim }
  \operatorname{Ext}^2((E_2,V_2),(E_1,V_1))=a,$$
  \begin{equation}\label{59}
  \operatorname{dim }\operatorname{Hom}((E_2,V_2),(E_1,V_1))=b\Big\}.
  \end{equation}
  
  Every $\hat{T}_{a,b;i}$ is invariant under the action of $PGL(N_1)\times PGL(N_2)$; if we denote
  by $T_{a,b;i}$ its image in $G_1\times G_2$, then $\{T_{a,b;i}\}_i$ is a finite stratification by locally closed
  subschemes of 
  
   $$T_{a,b}:=\Big\{((E_1,V_1),(E_2,V_2))\in G_1\times G_2\textrm{ s.t. }\operatorname{dim }\operatorname{Ext}^2(
   (E_2,V_2),(E_1,V_1))=a,$$
  \begin{equation}\label{60}
  \operatorname{dim }\operatorname{Hom}((E_2,V_2),(E_1,V_1))=b\Big\}.
  \end{equation}
  
 \item For each $(a,b;i)$ there exists a projective bundle $\hat{\varphi}_{a,b;i}:\hat{P}_{a,b;i}\rightarrow
   \hat{T}_{a,b;i}$, where
 
   $$\hat{P}_{a,b;i}:=\mathbb{P}\Big(\mathcal{E}xt^1_{\pi_{\hat{T}_{a,b;i}}}
   \Big((\hat{p}'_2,\hat{p}_2)^*(\hat{\mathcal{Q}}_2,\hat{\mathcal{W}}_2)|_{\hat{T}_{a,b;i}},
   (\hat{p}'_1,\hat{p}_1)^*(\hat{\mathcal{Q}}_1,\hat{\mathcal{W}}_1)|_{\hat{T}_{a,b;i}}\Big)\Big).$$
 
 \item There are free actions of $PGL(N_1)\times PGL(N_2)$ on the source and target of each $\hat{\varphi}_{a,b;i}$;
  there exists quotient schemes $G(\alpha_c;n,d,k;n_1,k_1;a,b;i)$ and $T_{a,b;i}$ and an induced fibration:
  
  $$\varphi_{a,b;i}:G(\alpha_c;n,d,k;n_1,k_1;a,b;i)\rightarrow T_{a,b;i}.$$
 
 \item For every point $t=((E_1,V_1),(E_2,V_2))\in T_{a,b;i}$ the fiber of $\varphi_{a,b;i}$ over it is given by
  $\mathbb{P}(\operatorname{Ext}^1((E_2,V_2),(E_1,V_1)))$;

 \item The set $G(\alpha_c;n,d,k)$ admits a finite stratification

  $$G(\alpha_c;n,d,k;n_1,k_1)=\coprod_{a,b;i} G(\alpha_c;n,d,k;n_1,k_1;a,b;i),$$
 
 \item For every $(a,b;i)$ there exists a universal extension parametrized by $\hat{P}_{a,b;i}$:
 
  $$0\rightarrow(\hat{\varphi}'_{a,b;i},\hat{\varphi}_{a,b;i})^*(\hat{p}'_1,\hat{p}_1)^*(\hat{\mathcal{Q}}_1,
  \hat{\mathcal{W}}_1)\otimes_{\hat{P}_{a,b;i}}\mathcal{O}_{\hat{P}_{a,b;i}}(1)\rightarrow$$
  \begin{equation}\label{61}
  \rightarrow(\hat{\mathcal{E}}_{a,b;i},\hat{\mathcal{V}}_{a,b;i})\rightarrow(\hat{\varphi}'_{a,b;i},
  \hat{\varphi}_{a,b;i})^*(\hat{p}'_2,\hat{p}_2)^*(\hat{\mathcal{Q}}_2,\hat{\mathcal{W}}_2)\rightarrow 0.
  \end{equation}
\end{itemize}

In addition, if $GCD(n_l,d_l,k_l)=1$ for $l=1,2$, then we can write

$$G(\alpha_c;n,d,k;n_1,k_1;a,b;i)=$$
$$=\mathbb{P}\Big(\mathcal{E}xt^1_{\pi_{T_{a,b;i}}}\Big((p'_2,p_2)^*
(\mathcal{Q}_2,\mathcal{W}_2)|_{T_{a,b;i}},(p'_1,p_1)^*(\mathcal{Q}_1,\mathcal{W}_1)|_{T_{a,b;i}}\Big)\Big)$$

and there exists a universal extension parametrized by $G(\alpha_c;n,d,k;n_1,k_1;a,b;i)$:

$$0\rightarrow(\varphi'_{a,b;i},\varphi_{a,b;i})^*(p'_1,p_1)^*(\mathcal{Q}_1,\mathcal{W}_1)\otimes_{P_{a,b;i}}
\mathcal{O}_{P_{a,b;i}}(1)\rightarrow$$
\begin{equation}\label{71}
\rightarrow(\mathcal{E}_{a,b;i},\mathcal{V}_{a,b;i})\rightarrow(\varphi'_{a,b;i},\varphi_{a,b;i})^*
(p'_2,p_2)^*(\mathcal{Q}_2,\mathcal{W}_2)\rightarrow 0.
\end{equation}

\end{proposition}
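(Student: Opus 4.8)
The plan is to apply Lemma~\ref{55} and Corollaries~\ref{44} and~\ref{45} on the level of the Quot schemes, and then to pass to $PGL(N_1)\times PGL(N_2)$-quotients. First I would apply Lemma~\ref{55} to the scheme $T=\hat{G}_1\times\hat{G}_2$ and to the two families $(\hat{p}'_l,\hat{p}_l)^*(\hat{\mathcal{Q}}_l,\hat{\mathcal{W}}_l)$, $l=1,2$. By Proposition~\ref{11} the locus where $\dim\operatorname{Ext}^2$ and $\dim\operatorname{Hom}$ equal $a$ and $b$ is locally closed (and equals $\hat{T}_{a,b}$ of~(\ref{59})), and by~(\ref{54}) (Proposition~\ref{16}) $\dim\operatorname{Ext}^1$ is then constant there, so Lemma~\ref{55} yields a finite stratification $\{\hat{T}_{a,b;i}\}_{a,b;i}$ by reduced locally closed subschemes on each of which the sheaves $\mathcal{E}xt^i_{\pi_{\hat{T}_{a,b;i}}}(\cdots)$ for $i=0,1,2$ are locally free and commute with every base change. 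Since $\dim\operatorname{Ext}^i$ depends only on the isomorphism classes of the coherent systems involved, and since $PGL(N_1)\times PGL(N_2)$ is connected (hence preserves each irreducible component used in the proof of Lemma~\ref{55}), every $\hat{T}_{a,b;i}$ is invariant under this group; as $\hat{G}_l$ consists of GIT-stable points the action is free, so the geometric quotient $T_{a,b;i}$ of $\hat{T}_{a,b;i}$ exists, is locally closed in $G_1\times G_2$, and $\{T_{a,b;i}\}_i$ stratifies $T_{a,b}$ of~(\ref{60}).

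On each $\hat{T}_{a,b;i}$ the hypotheses of Corollary~\ref{44} hold ($\hat{T}_{a,b;i}$ reduced, $\mathcal{E}xt^i$ for $i=0,1$ commuting with base change), which produces the projective bundle $\hat{\varphi}_{a,b;i}\colon\hat{P}_{a,b;i}\to\hat{T}_{a,b;i}$ with $\hat{P}_{a,b;i}=\mathbb{P}(\mathcal{E}xt^1_{\pi_{\hat{T}_{a,b;i}}}(\cdots)^\vee)$ and a universal family of non-split extensions; by the fibrewise identification in Corollary~\ref{44}, the fibre of $\hat{\varphi}_{a,b;i}$ over a point lying over $((E_1,V_1),(E_2,V_2))$ is $\mathbb{P}(\operatorname{Ext}^1((E_2,V_2),(E_1,V_1)))$. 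For the extension sequence~(\ref{61}) I would invoke Corollary~\ref{45}: on each stratum $\operatorname{Hom}((E_2,V_2),(E_1,V_1))=0$, because $(E_1,V_1)$ and $(E_2,V_2)$ are $\alpha_c$-stable of the same $\alpha_c$-slope and of distinct types (a non-zero morphism between such systems would be an isomorphism of objects of different type), so Corollary~\ref{45} applied over $\hat{T}_{a,b;i}$ gives~(\ref{61}), which, being written in terms of the pulled-back $(\hat{\mathcal{Q}}_l,\hat{\mathcal{W}}_l)$, already lives on $\hat{P}_{a,b;i}$ and requires no further descent.

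The descent step comes next. The families $(\hat{\mathcal{Q}}_l,\hat{\mathcal{W}}_l)$ on the Quot schemes carry natural $GL(N_l)$-equivariant structures (Remark~\ref{10}); these induce a $GL(N_1)\times GL(N_2)$-equivariant structure on the pulled-back families over $\hat{T}_{a,b;i}$, hence on the locally free sheaf $\mathcal{E}xt^1_{\pi_{\hat{T}_{a,b;i}}}(\cdots)^\vee$, on which the central torus $\mathbb{C}^*\times\mathbb{C}^*$ acts only by scaling; therefore the action on $\hat{P}_{a,b;i}$ factors through a $PGL(N_1)\times PGL(N_2)$-action covering that on $\hat{T}_{a,b;i}$. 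This action is free (already free on the base), and since $\hat{P}_{a,b;i}\to\hat{T}_{a,b;i}$ is projective and equivariant the geometric quotient exists as a scheme by descent along $\hat{T}_{a,b;i}\to T_{a,b;i}$; I set $G(\alpha_c;n,d,k;n_1,k_1;a,b;i)$ to be this quotient and $\varphi_{a,b;i}$ the induced fibration. The orbit map being an isomorphism on fibres, the fibre of $\varphi_{a,b;i}$ over $((E_1,V_1),(E_2,V_2))$ is again $\mathbb{P}(\operatorname{Ext}^1((E_2,V_2),(E_1,V_1)))$.

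Finally, by Definition~\ref{14} and Lemma~\ref{91} a point of $G(\alpha_c;n,d,k;n_1,k_1)$ is a triple $((E_1,V_1),(E_2,V_2),[\xi])$ whose first two entries lie in a unique $T_{a,b;i}$ and whose last entry is a point of the fibre of $\varphi_{a,b;i}$ over them, which gives the asserted disjoint decomposition into the locally closed pieces $G(\alpha_c;n,d,k;n_1,k_1;a,b;i)$. When $GCD(n_l,d_l,k_l)=1$ for $l=1,2$ the family $(\hat{\mathcal{Q}}_l,\hat{\mathcal{W}}_l)$ descends to a universal family $(\mathcal{Q}_l,\mathcal{W}_l)$ on $G_l$, and one may run Lemma~\ref{55} and Corollaries~\ref{44},~\ref{45} directly with $T=G_1\times G_2$ and these families, obtaining $\mathbb{P}(\mathcal{E}xt^1_{\pi_{T_{a,b;i}}}((p'_2,p_2)^*(\mathcal{Q}_2,\mathcal{W}_2)|_{T_{a,b;i}},(p'_1,p_1)^*(\mathcal{Q}_1,\mathcal{W}_1)|_{T_{a,b;i}}))$ together with the universal extension~(\ref{71}); this coincides with $G(\alpha_c;n,d,k;n_1,k_1;a,b;i)$ because the two choices of universal family differ by tensoring with a line bundle on the base, which by Lemma~\ref{28} does not affect the projectivised $\mathcal{E}xt^1$-sheaf. \textbf{The main obstacle} is the descent: exhibiting the $PGL(N_1)\times PGL(N_2)$-linearization of $\mathcal{E}xt^1_{\pi_{\hat{T}_{a,b;i}}}(\cdots)^\vee$ whose projectivization carries a group action over the free action on the base, and checking that the geometric quotient of $\hat{P}_{a,b;i}$ exists as a scheme with the claimed fibres; the vanishing of $\operatorname{Hom}$ on the strata, needed for~(\ref{61}) and~(\ref{71}), is a secondary point following from $\alpha_c$-stability of both factors.
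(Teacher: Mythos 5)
There is a genuine gap, and it lies exactly where you located your ``main obstacle'' in the wrong place. You obtain the universal extension (\ref{61}) by invoking Corollary \ref{45}, justified by the claim that $\operatorname{Hom}((E_2,V_2),(E_1,V_1))=0$ on every stratum because the two coherent systems are $\alpha_c$-stable of the same $\alpha_c$-slope ``and of distinct types''. But Proposition \ref{58} is stated for any pair $(n_1,k_1)$ as in Definition \ref{14}, which only requires $0<n_1<n$ and $0\leq k_1\leq k$; it does not exclude $\frac{k_1}{n_1}=\frac{k}{n}$, in which case one can have $(n_1,d_1,k_1)=(n_2,d_2,k_2)$ and points with $(E_1,V_1)\simeq(E_2,V_2)$, where $\operatorname{Hom}$ is one-dimensional. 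This is precisely why the statement stratifies by $b=\dim\operatorname{Hom}$ and allows $b\neq 0$: if your vanishing claim held in the stated generality, every nonempty stratum would have $b=0$ and the index $b$ would be vacuous. Your argument therefore proves only the special case $\frac{k_1}{n_1}\neq\frac{k}{n}$ (where the types are indeed forced to differ and a nonzero morphism between non-isomorphic stable systems of equal slope cannot exist), which happens to be the case relevant to the flip loci, but not the proposition as stated.

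The paper's route is different at exactly this point: it never uses Corollary \ref{45} here. Via Proposition \ref{57} it applies Corollary \ref{44} on each $\hat{T}^j_{a,b}$, obtaining only a \emph{family} of non-split extensions universal over reduced base schemes; since such a family is by definition only locally induced by genuine extension classes, the paper then takes a finite open cover $\{\hat{T}^{j,m}_{a,b}\}_m$ on which actual extensions (\ref{81}) exist, and refines the stratification by the locally closed differences of these open sets. That refinement is the origin of the extra index $i$ in $\hat{T}_{a,b;i}$ and is what produces the short exact sequence (\ref{61}) on each stratum; your proof skips it entirely (you assert that (\ref{61}) ``requires no further descent''), so even the combinatorics of your stratification does not match the statement. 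To repair your argument you would either have to restrict the proposition to the case $\frac{k_1}{n_1}\neq\frac{k}{n}$ (where your cleaner argument via Corollary \ref{45} does give a single universal extension over each $T^j_{a,b}$ with $b=0$), or else reinstate the paper's covering-and-refinement step for the strata where $\operatorname{Hom}$ does not vanish. Your treatment of the $PGL(N_1)\times PGL(N_2)$-invariance, freeness and descent, which the paper dismisses as straightforward, is fine and in fact more explicit than the original.
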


\begin{proof}
Let us set $\hat{T}:=\hat{G}_1\times\hat{G}_2$ and let us consider the families $(\mathcal{E}_l,\mathcal{V}_l):=
(\hat{p}'_l,\hat{p}_l)^*(\hat{\mathcal{Q}}_l,$ $\hat{\mathcal{W}}_l)$ parametrized by $T$ for $l=1,2$. By applying
proposition \ref{57} we get the following facts:

\begin{itemize}
 \item $\hat{T}$ has a finite stratification $\{\hat{T}_{a,b}^j\}_{a,b;j}$ by locally closed subschemes; by the local
  universal property of the families $(\hat{\mathcal{Q}}_l,\hat{\mathcal{W}}_l)$ for $l=1,2$ each set
  $\{\hat{T}_{a,b}^j\}_j$ is a finite stratification of the set $\hat{T}_{a,b}$ described in (\ref{59});
 \item for each $(a,b;j)$ there exists a projective bundle $\hat{\varphi}_{a,b}^j:\hat{P}_{a,b}^j\rightarrow
  \hat{T}_{a,b}^j$, where
 
  $$\hat{P}_{a,b}^j:=\mathbb{P}\Big((\hat{\mathcal{H}}_{a,b}^j)^{\vee}\Big)$$
 
  and the sheaf
 
  $$\hat{\mathcal{H}}_{a,b}^j:=\mathcal{E}xt^1_{\pi_{\hat{T}_{a,b}^j}}\Big((\hat{p}'_2,\hat{p}_2)^*(
  \hat{\mathcal{Q}}_2,\hat{\mathcal{W}}_2)|_{\hat{T}_{a,b}^j},(\hat{p}'_1,\hat{p}_1)^*(\hat{\mathcal{Q}}_1,
  \hat{\mathcal{W}}_1)|_{\hat{T}_{a,b}^j}\Big)$$
 
  commutes with base change. In particular, for every point $\hat{t}=(\hat{t}_1,\hat{t}_2)$ in $\hat{T}_{a,b}^j$ with
  image $((E_1,V_1),(E_2,V_2))$ in $G_1\times G_2$, the fiber of $\hat{\varphi}_{a,b}^j$ over $\hat{t}$ is canonically 
  identified with $\mathbb{P}(\operatorname{Ext}^1((E_2,V_2),(E_1,V_1)))$;
 
 \item for each $(a,b;j)$ there exists a family of non-split extensions $\{e_{\hat{p}}\}_{\hat{p}\in\hat{P}_{a,b}^j}$
  of $(\hat{\varphi}_{a,b}^{'j},\hat{\varphi}_{a,b}^j)^*(\hat{p}'_2,\hat{p}_2)^*(\hat{\mathcal{Q}}_2,
  \hat{\mathcal{W}}_2)$ by $((\hat{\varphi}_{a,b}^{'j},\hat{\varphi}_{a,b}^j)^*(\hat{p}'_1,\hat{p}_1)^*
  (\hat{\mathcal{Q}}_1,\hat{\mathcal{W}}_1))\otimes_{\hat{P}_{a,b}^j}\mathcal{O}_{\hat{P}_{a,b}^j}(1)$, which is
  universal on the category of reduced $\hat{T}_{a,b}^j$-schemes.
\end{itemize}

In particular, by definition of family there is an open covering $\{\hat{T}_{a,b}^{j,m}\}_m$ of $\hat{T}_{a,b}^j$ such
that for each $m$ there is an extension

$$0\rightarrow((\hat{\varphi}_{a,b}^{'j},\hat{\varphi}_{a,b}^j)^*(\hat{p}'_1,\hat{p}_1)^*(\hat{\mathcal{Q}}_1,
\hat{\mathcal{W}}_1))\otimes_{\hat{P}_{a,b}^j}\mathcal{O}_{\hat{P}_{a,b}^j}(1)|_{\hat{T}_{a,b}^{j,m}}\rightarrow$$
\begin{equation}\label{81}
\rightarrow(\hat{\mathcal{E}}_{a,b}^{j,m},\hat{\mathcal{V}}_{a,b}^{j,m})\rightarrow
(\hat{\varphi}_{a,b}^{'j},\hat{\varphi}_{a,b}^j)^*(\hat{p}'_2,\hat{p}_2)^*(\hat{\mathcal{Q}}_2,
\hat{\mathcal{W}}_2)|_{\hat{T}_{a,b}^{j,m}}\rightarrow 0
\end{equation}
that is universal on the category of reduced $\hat{T}_{a,b}^{j,m}$-schemes and that for every point of
$\hat{T}_{a,b}^{j,m}$ restricts to a non-split extension. Since we are working with schemes of finite type over
$\mathbb{C}$, then we can assume that $m$ varies over a finite set $M=\{m_1<\cdots<m_r\}$. Then for every subset
$m_{\bullet}=\{m'_1<\cdots<m'_s\}\subset M$ with complement set $\{m'_{s+1}<\cdots<m'_r\}$ we set

$$\hat{T}_{a,b}^{j,m_{\bullet}}:=(\hat{T}_{a,b}^{j,m'_1}\cap\cdots\cap\hat{T}_{a,b}^{j,m'_s})\smallsetminus
(\hat{T}_{a,b}^{j,m'_{s+1}}\cup\cdots\cup\hat{T}_{a,b}^{j,m'_r}).$$

Let us rename the finite set of pairs $(j,m_{\bullet})$ as $\{i\}_{i\in I}$; according to that, let us set
$\hat{T}_{a,b;i}:=\hat{T}_{a,b}^{j,m_{\bullet}}$ and analogously for all the other objects defined so far. In
particular, we have a finite stratification by locally closed subschemes $\hat{T}_{a,b}=\coprod_i\hat{T}_{a,b;i}$. For
any $i=(j,m_{\bullet})$ let us consider the embedding $\hat{\nu}_i:\hat{T}_{a,b;i}\hookrightarrow\hat{T}_{a,b}^j$ and
let us consider the cartesian diagram

\[\begin{tikzpicture}[xscale=1.5,yscale=-1.2]
    \node (A0_0) at (0, 0) {$\hat{P}_{a,b;i}$};
    \node (A0_2) at (2, 0) {$\hat{P}_{a,b}^j$};
    \node (A1_1) at (1, 1) {$\square$};
    \node (A2_0) at (0, 2) {$\hat{T}_{a,b;i}$};
    \node (A2_2) at (2, 2) {$\hat{T}_{a,b}^j.$};
    \path (A0_0) edge [->]node [auto] {$\scriptstyle{\hat{\nu}'_i}$} (A0_2);
    \path (A0_0) edge [->]node [auto] {$\scriptstyle{\hat{\varphi}_{a,b;i}}$} (A2_0);
    \path (A0_2) edge [->]node [auto] {$\scriptstyle{\hat{\varphi}_{a,b}^j}$} (A2_2);
    \path (A2_0) edge [->]node [auto] {$\scriptstyle{\hat{\nu}_i}$} (A2_2);
\end{tikzpicture}\]

Since $\mathcal{H}_{a,b}^j$ commutes with base change, then we have that

$$\hat{P}_{a,b;i}=\mathbb{P}\Big((\hat{\mathcal{H}}_{a,b;i})^{\vee}\Big),$$
where

$$\hat{\mathcal{H}}_{a,b;i}=\mathcal{E}xt^1_{\pi_{\hat{T}_{a,b;i}}}\Big((\hat{p}'_2,\hat{p}_2)^*(\hat{\mathcal{Q}}_2,
\hat{\mathcal{W}}_2)|_{\hat{T}_{a,b;i}},(\hat{p}'_1,\hat{p}_1)^*(\hat{\mathcal{Q}}_1,
\hat{\mathcal{W}}_1)|_{\hat{T}_{a,b;i}}\Big).$$

Now by restricting the sequence (\ref{81}) to $\hat{P}_{a,b;i}$ we get a family of non-split extensions parametrized by
$\hat{P}_{a,b;i}$:

$$0\rightarrow(\hat{\varphi}'_{a,b;i},\hat{\varphi}_{a,b;i})^*(\hat{p}'_1,\hat{p}_1)^*(\hat{\mathcal{Q}}_1,
\hat{\mathcal{W}}_1)\otimes_{\hat{P}_{a,b;i}}\mathcal{O}_{\hat{P}_{a,b;i}}(1)\rightarrow$$
$$\rightarrow(\hat{\mathcal{E}}_{a,b;i},\hat{\mathcal{V}}_{a,b;i})\rightarrow
(\hat{\varphi}'_{a,b;i},\hat{\varphi}_{a,b;i})^*(\hat{p}'_2,\hat{p}_2)^*(\hat{\mathcal{E}}_2,\hat{\mathcal{V}}_2)
\rightarrow 0$$
that is universal on the category of reduced $\hat{T}_{a,b;i}$-schemes.\\

Since $PGL(N_1)\times PGL(N_2)$ acts freely on both $\hat{P}_{a,b;i}$ and $\hat{T}_{a,b;i}$, we have an induced
projective fibration $\varphi_{a,b;i}:G(\alpha_c;n,d,k;n_1,k_1)\rightarrow T_{a,b;i}$. The rest of the proof is
straightforward.\\

If $GCD(n_l,d_l,k_l)=1$ for $l=1,2$, then we have universal families $(\mathcal{Q}_l,\mathcal{W}_l)$ parametrized by
$G_l$ for $l=1,2$, so the construction of the schemes $T_{a,b;i}$ and $P_{a,b;i}$ can be done directly at the level
of $G_1\times G_2$ instead of doing it on $\hat{G}_1\times\hat{G}_2$. Therefore we can also construct universal
families of extensions as in (\ref{71}).
\end{proof}

\begin{corollary}
If $\frac{k_1}{n_1}<\frac{k}{n}$, respectively $\frac{k_1}{n_1}>\frac{k}{n}$, then the scheme $G(\alpha_c;n,d,k;
n_1,d_1,$ $k_1;a,b;j)$ is a subscheme of $G(\alpha_c^+;n,d,k)$, respectively of $G(\alpha_c^-;n,d,k)$. So proposition
\ref{58} gives a scheme theoretic description of the sets $G^{+,2}(\alpha_c;n,d,k)$ and $G^{-,2}(\alpha_c;n,d,k)$ that
were described set theoretically in corollary \ref{15}.
\end{corollary}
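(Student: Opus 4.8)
The plan is to produce, in the case $\frac{k_1}{n_1}<\frac{k}{n}$ (the case $\frac{k_1}{n_1}>\frac{k}{n}$ being entirely symmetric, with $G(\alpha_c^+;n,d,k)$ replaced by $G(\alpha_c^-;n,d,k)$), a canonical morphism $\bar f\colon G(\alpha_c;n,d,k;n_1,k_1;a,b;j)\to G(\alpha_c^+;n,d,k)$, to identify it on points with one locally closed piece of the stratification of $G^{+,2}(\alpha_c;n,d,k)$ in Corollary \ref{15}, and finally to show it is a locally closed immersion. First I would observe that the universal extension (\ref{61}) is a family of $\alpha_c^+$-stable coherent systems: over $\hat T_{a,b;j}$ the pullbacks $(\hat p'_l,\hat p_l)^*(\hat{\mathcal Q}_l,\hat{\mathcal W}_l)$ restrict on each fibre to $\alpha_c$-stable coherent systems of type $(n_l,d_l,k_l)$ (since $\hat G_l$ parametrizes GIT-$\alpha_c$-stable points), so over each point of $\hat P_{a,b;j}$ the middle term of (\ref{61}) is the middle term of a non-split extension of two such systems, which by the converse part of Lemma \ref{91} — this is where the hypothesis $\frac{k_1}{n_1}<\frac{k}{n}$ enters — lies in $G^{+,2}(\alpha_c;n,d,k)\subseteq G(\alpha_c^+;n,d,k)$. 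Thus $(\hat{\mathcal E}_{a,b;j},\hat{\mathcal V}_{a,b;j})$ is a family of $\alpha_c^+$-stable coherent systems of type $(n,d,k)$ on $\hat P_{a,b;j}$, and by the coarse moduli property of $G(\alpha_c^+;n,d,k)$ it induces a classifying morphism $\hat f\colon\hat P_{a,b;j}\to G(\alpha_c^+;n,d,k)$.

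Next I would descend $\hat f$. Tensoring a family by a line bundle pulled back from the base does not change the isomorphism class of any fibre, and the $PGL(N_1)\times PGL(N_2)$-action alters the families $(\hat p'_l,\hat p_l)^*(\hat{\mathcal Q}_l,\hat{\mathcal W}_l)$ only by family isomorphisms; hence $\hat f$ is constant on $PGL(N_1)\times PGL(N_2)$-orbits, and since $G(\alpha_c;n,d,k;n_1,k_1;a,b;j)$ is a geometric quotient of $\hat P_{a,b;j}$ by this free action, $\hat f$ descends to $\bar f$. On closed points a point of $G(\alpha_c;n,d,k;n_1,k_1;a,b;j)$ is a triple $((E_1,V_1),(E_2,V_2),[\xi])$ as in Definition \ref{14} satisfying the $(a,b;j)$-conditions, and $\bar f$ sends it to the middle term $(E,V)$ of the non-split extension represented by $\xi$. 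By Lemma \ref{91} the system $(E,V)$ recovers $(E_1,V_1)$, $(E_2,V_2)$ and $[\xi]$ uniquely, so $\bar f$ is injective on closed points, and by the converse part of the same lemma its image is exactly the locally closed set of those $(E,V)\in G(\alpha_c^+;n,d,k)$ with $\alpha_c$-Jordan--H\"older filtration of length $2$ of numerical type $(n_1,k_1)$, with $\dim\operatorname{Ext}^2=a$, $\dim\operatorname{Hom}=b$, in stratum $j$. Taking the union over all $(n_1,k_1)$ and all $(a,b;j)$ then recovers set-theoretically the stratification of $G^{+,2}(\alpha_c;n,d,k)$ of Corollary \ref{15}, refined by the $\operatorname{Ext}$-data.

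The main point, and the main obstacle, is to upgrade this bijection to a locally closed immersion. For this I would build a two-sided inverse over $X:=\bar f\big(G(\alpha_c;n,d,k;n_1,k_1;a,b;j)\big)$. Passing to the Quot scheme $\hat G(\alpha_c^+;n,d,k)$ with its local universal family $(\hat{\mathcal Q},\hat{\mathcal W})$, and letting $\hat X$ be the preimage of $X$, the key step — not established earlier in the paper — is a \emph{relative Jordan--H\"older decomposition of length $2$} over $\hat X$: a locally free subsheaf $(\hat{\mathcal E}_1,\hat{\mathcal V}_1)\hookrightarrow(\hat{\mathcal Q},\hat{\mathcal W})|_{\hat X}$, flat over $\hat X$ with $\alpha_c$-stable fibres of type $(n_1,d_1,k_1)$, together with a flat quotient $(\hat{\mathcal E}_2,\hat{\mathcal V}_2)$ of type $(n_2,d_2,k_2)$. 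Such a decomposition exists and is unique because by Lemma \ref{91} the destabilizing subsystem of the prescribed type is unique over every point; one would obtain it from a flattening stratification for the appropriate Quot scheme of quotients of $(\hat{\mathcal Q},\hat{\mathcal W})|_{\hat X}$ together with this pointwise uniqueness, and the $\operatorname{Ext}$-dimension conditions force the induced map to $\hat G_1\times\hat G_2$ to factor through $\hat T_{a,b;j}$. Granting this, the sequence $0\to(\hat{\mathcal E}_1,\hat{\mathcal V}_1)\to(\hat{\mathcal Q},\hat{\mathcal W})|_{\hat X}\to(\hat{\mathcal E}_2,\hat{\mathcal V}_2)\to 0$ is non-split over every point, so Corollary \ref{44} gives a unique morphism $\hat X\to\hat P_{a,b;j}$ over $\hat T_{a,b;j}$ (modulo the canonical $\operatorname{H}^0(\hat X,\mathcal O_{\hat X}^*)$-action); all of this is $PGL(N_1)\times PGL(N_2)$-equivariant, so it descends to $g\colon X\to G(\alpha_c;n,d,k;n_1,k_1;a,b;j)$. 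Then $\bar f\circ g=\operatorname{id}_X$ and $g\circ\bar f=\operatorname{id}$ follow from the uniqueness clauses of Lemma \ref{91} and Corollary \ref{44}, so $\bar f$ is an isomorphism onto the subscheme $X$.

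Alternatively, once one knows from the uniqueness of the relative decomposition that $\bar f$ is a monomorphism, it is automatically quasi-finite and separated, hence by Zariski's Main Theorem a composite of an open immersion with a finite morphism; since a finite monomorphism is a closed immersion, $\bar f$ is a locally closed immersion. When $GCD(n_l,d_l,k_l)=1$ for $l=1,2$ the same argument runs directly with the genuine universal families on $G_l$ and on $G(\alpha_c^+;n,d,k)$, bypassing the descent and yielding also the universal extension (\ref{71}). Putting everything together, Proposition \ref{58} equips $G^{+,2}(\alpha_c;n,d,k)$ and $G^{-,2}(\alpha_c;n,d,k)$ with the announced scheme structure, the crucial technical input being the flatness of the relative Jordan--H\"older sub- and quotient-families.
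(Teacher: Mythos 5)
Your first two paragraphs coincide with the paper's own proof: the paper likewise restricts the universal extension (\ref{61}) to each point of $\hat P_{a,b;i}$, invokes the non-splitness together with the numerical conditions of definition \ref{14} and $\frac{k_1}{n_1}<\frac{k}{n}$ (i.e.\ the converse part of lemma \ref{91}) to conclude the middle term is $\alpha_c^+$-stable, uses the coarse moduli property of $G(\alpha_c^+;n,d,k)$ to get $\hat\zeta_{a,b;i}$, descends through the free $PGL(N_1)\times PGL(N_2)$-action, and uses lemma \ref{91} for injectivity, the identification of the image inside $G^{+,2}(\alpha_c;n,d,k)$, and the disjointness of the images for different invariants. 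Where you diverge is the last step: the paper simply asserts ``$\zeta_{a,b;i}$ is an embedding'' with no argument, whereas you try to prove it, either by constructing an inverse over the image via a relative length-$2$ Jordan--H\"older filtration of the pulled-back local universal family and the universal property of corollary \ref{44}, or by the monomorphism-plus-Zariski's-Main-Theorem route. That extra work targets a real gap in the paper's proof, and you are right to flag that the relative Jordan--H\"older decomposition is nowhere established in the paper; but your own treatment of it is still a sketch rather than a proof: the existence and flatness of the subfamily $(\hat{\mathcal E}_1,\hat{\mathcal V}_1)\subset(\hat{\mathcal Q},\hat{\mathcal W})|_{\hat X}$ does not follow from pointwise uniqueness alone (a flattening-stratification/Quot argument gives a parameter space mapping bijectively and properly to $\hat X$, which need not be an isomorphism without further input on $\hat X$ or on the differential of that map), the scheme structure to put on the image $X$ is left implicit although the statement ``is a subscheme'' is precisely about that structure, corollary \ref{44} only applies over \emph{reduced} bases and gives uniqueness modulo $\operatorname{H}^0(\mathcal O^*)$, and the ZMT alternative needs scheme-theoretic (not merely closed-point) injectivity of $\bar f$. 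So: same route as the paper for everything the paper actually argues, plus a reasonable but incomplete attempt to supply the immersion statement that the paper leaves unproved.
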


\begin{proof}
Let us consider the case when $\frac{k_1}{n_1}<\frac{k}{n}$, the other case is analogous. Let us fix any set of
indices $(a,b;i)$ and the associated short exact sequence (\ref{61}). For every point $\hat{t}:=(\hat{t}_1,\hat{t}_2)
\in\hat{T}_{a,b;i}$ with image $((E_1,V_1),(E_2,V_2))$ in $T_{a,b;i}$ such a sequence restricts to a non-split
extension

$$0\longrightarrow(E_1,V_1)\longrightarrow(E,V)\longrightarrow(E_2,V_2)\longrightarrow 0$$

Now let us consider the conditions on $(n_l,d_l,k_l)$ for $l=1,2$ given in definition \ref{14}. Together with the fact
that the previous sequence is non-split and that $\frac{k_1}{n_1}<\frac{k}{n}$, such conditions imply that $(E,V)$ is
$\alpha_c^+$-stable and $\alpha_c^-$-unstable. In particular, for every $\hat{t}\in\hat{T}_{a,b;i}$ the family
$(\hat{\mathcal{E}}_{a,b;i},\hat{\mathcal{V}}_{a,b;i})$ restricts to a point of $G(\alpha_c^+;n,d,k)$. By the universal
property of $G(\alpha_c^+;n,d,k)$, this induces a morphism

$$\hat{\zeta}_{a,b;i}:\hat{P}_{a,b;i}\rightarrow G(\alpha_c^+;n,d,k).$$

Such a morphism is invariant under the action of $PGL(N_1)\times PGL(N_2)$ on the source. Therefore, it induces a
morphism

$$\zeta_{a,b;i}:G(\alpha_c;n,d,k;a,b;i)\rightarrow G(\alpha_c^+;n,d,k)$$
(if $GCD(n_l,d_l,k_l)=1$ for $l=1,2$, then we can construct $\zeta_{a,b;i}$ directly by using the sequence (\ref{71})).
$\zeta_{a,b;i}$ is an embedding and it has values in $G^{+,2}(\alpha_c;n,d,k)\subset G(\alpha_c^+;n,d,k)$ by lemma
\ref{91}. By construction for different choices of the invariants $(n_1,k_1;a,b;i)$ we get disjoint images in
$G(\alpha_c^+;n,d,k)$. So we conclude.
\end{proof}


\end{document}